\documentclass[a4paper,11pt]{amsart}
\usepackage{amssymb,amsfonts,amsxtra,
mathrsfs,placeins,graphicx,verbatim,stmaryrd}
\usepackage[all]{xy}
\xyoption{line}
\usepackage{fullpage}
\usepackage{euscript}
\newtheorem{theorem}{Theorem}[section]
\newtheorem{cor}[theorem]{Corollary}

\newtheorem{prop}[theorem]{Proposition}

\newtheorem{example}[theorem]{Example}
\newtheorem{defi}[theorem]{Definition}
\newtheorem{rem}[theorem]{Remark}

\numberwithin{equation}{section}
\DeclareMathOperator{\Hom}{Hom}
\DeclareMathOperator{\MC}{MC}
\DeclareMathOperator{\Aut}{Aut}

\DeclareMathOperator{\Ker}{Ker}

\newcommand{\noproof}{\begin{flushright} \ensuremath{\square}
\end{flushright}}
\def\falg{{\mathscr F}Alg}

\def\ground{\mathbf k}
\def\C{\mathscr C}
\def\L{\mathscr L}
\def\Gr{\operatorname{Gr}}

\def\g{\mathfrak g}
\def\h{\mathfrak h}
\def\MCmod{\mathscr {MC}}
\def\End{\operatorname{End}}
\def\Z{\mathbb{Z}}

\def\id{\operatorname{id}}

\def\CE{\operatorname{CE}}
\DeclareMathOperator{\Harr}{Harr}
\DeclareMathOperator{\Hoch}{Hoch}

\thanks{}

\begin{document}

\title[Models for function spaces]{Maurer-Cartan moduli and models for function spaces}
\author{A.~Lazarev}
\thanks{The author is grateful to C. Braun, J. Chuang, V. Hinich, B. Keller and M. Markl for many useful discussions concerning the subject of this paper.}
\address{University of Lancaster\\ Department of
Mathematics and Statistics\\Lancaster LA1 4YF, UK.}
\email{a.lazarev@lancaster.ac.uk} \keywords{Closed model category, Differential graded
algebra, Chevalley-Eilenberg cohomology, Maurer-Cartan element, Sullivan model} \subjclass[2000]{18D50, 57T30, 81T18, 16E45}
\begin{abstract}
We set up a formalism of Maurer-Cartan moduli sets for $L_\infty$ algebras and associated twistings based on the closed model category structure on formal differential graded algebras (a.k.a. differential graded coalgebras). Among other things this formalism allows us to give a compact and manifestly homotopy invariant treatment of Chevalley-Eilenberg and Harrison cohomology. We apply the developed technology to construct rational homotopy models for function spaces.
\end{abstract}

\maketitle
\tableofcontents
\section{Introduction}
The homotopy theory of function spaces has been much studied from various standpoints, cf. \cite{smith} for a comprehensive and up to date survey. The basic problem is as follows: given two spaces $X$ and $Y$ which are sufficiently nice (e.g. CW-complexes of finite type) describe effectively the homotopy type of the mapping space $F(X,Y)$ in terms of the homotopy types of $X$ and $Y$. From the point of view of rational homotopy theory several answers are known, \cite{Hae, BS, BPS}, but these answers are complicated and not readily amenable to calculations. We also mention the recent papers \cite{BFM, BFM'} and the preprint \cite{Ber} where explicit Lie models for function spaces were given but again, these models were not formulated in the standard framework of derived functors of homological algebra.  In \cite{BL} the homotopy groups of function spaces were computed in terms of Harrison-Andr\'e-Quillen cohomology, however the methods of that paper do not extend to a construction of full-fledged rational homotopy models of function spaces.

In the present paper we fill this gap. Given rational models (Sullivan or Lie-Quillen) of $X$ and $Y$ we construct a rational model for $F(X,Y)$ (as well as for the based function space $F_*(X,Y)$) in terms of traditional derived functors (Harrison and Chevalley-Eilenberg complexes). The construction is based on the formalism of Maurer-Cartan moduli sets and Maurer-Cartan twistings; some parts of this formalism are certainly known to experts but are difficult to locate in the literature; we hope that a unified treatment presented here will be of independent interest, especially from the standpoint of algebraic deformation theory.

The paper is organized as follows. In Section 2 we recall the construction of a closed model category structure on the category of formal commutative differential graded algebras due to Hinich; note that Hinich (as well as some other authors) prefers to work with coalgebras; we feel that the equivalent language of formal algebras is more natural, particularly in connection with Maurer-Cartan sets. Section 3 contains a description of cofibrant objects in Hinich's closed model category; these turn out to be formal cdgas representing $L_\infty$ algebras. Section 4 discusses the Neisendorfer closed model category structure which could be viewed as a localization of Hinich's (the latter point of view is not pursued here). This is needed for the applications to rational homotopy theory that we have in mind. In Section 5 we introduce the notion of a Maurer-Cartan set and prove a general statement about its homotopy invariance. This result underlies the modern approach to deformation theory suggested by Deligne, Feigin and Drinfeld at the end of the 1980's; it is also related to the still older manuscript of Schlessinger and Stasheff, which has recently been published on the arXiv, \cite{SS}. Various versions of it have since been proved by many authors. Our version is probably both the most general and simplest to prove (but it relies on the deep results of Hinich on the existence of a closed model category structure on formal cdgas).

Section 6 is devoted to the notion of a Maurer-Cartan twisting in the setting of $L_\infty$ algebras and Section 7 describes how Harrison and Chevalley-Eilenberg cohomology can be described compactly in terms of twistings. Finally, Section 6 contains the main application of the developed apparatus: the construction of a Lie-Quillen model of a function space between two rational nilpotent spaces.

\subsection{Notation and conventions} We work in the category of $\Z$-graded vector spaces over a field $\ground$
of characteristic zero, and we avoid mentioning $\ground$ explicitly. When considering models for topological spaces the field $\ground$ is understood to be $\mathbb Q$. Differential graded algebras will have cohomological grading with upper indices and differential graded Lie algebras will have homological grading with lower indices, unless indicated otherwise. The degree of a homogeneous element $x$ in a graded vector space is denoted by $|x|$.  The \emph{suspension}  $\Sigma V$ of a homologically graded vector space $V$ is defined by the convention $\Sigma V_i=V_{i-1}$; for a cohomologically graded space the convention is as follows: $\Sigma V^i= V^{i+1}$. The functor of taking the linear dual takes homologically graded vector spaces into cohomologically graded ones so that $(V^*)^i=(V_{i})^*$; further we will write $\Sigma V^*$ for $\Sigma(V^*)$; with this convention there is an isomorphism $(\Sigma V)^*\cong\Sigma^{-1}V^*$.

The adjective `differential graded will be abbreviated as `dg'. A
(commutative) differential graded (Lie) algebra will be abbreviated
as (c)dg(l)a.  We will often invoke the notion of a \emph{formal}
(dg) vector space; this is just an inverse limit of
finite-dimensional vector spaces. Here the notion of formality is understood in the sense of a `formal neighborhood' rather than `being quasi-isomorphic to the cohomology'; in a couple of places where this clashes with the standard terminology of rational homotopy theory the distinction is specifically spelled out. An example of a formal space is
$V^*$, the $\ground$-linear dual to a discrete vector space~$V$. A
formal vector space comes equipped with a topology and whenever we
deal with a formal vector space all linear maps from or into it will
be assumed to be continuous; thus we will always have $V^{**}\cong
V$.  All of our unmarked tensors are understood to be taken over
$\ground$. The tensor product $V\otimes W$ of two formal spaces is understood to be the completed tensor product (and so, it will again be formal). If $V$ is a discrete space and $W=\lim_\leftarrow{W_i}$ is
a formal space we will write $V\otimes W$ for
$\lim_{\leftarrow}V\otimes W_i$; thus for two discrete spaces $V$ and
$U$ we have $\Hom(V,U)\cong U\otimes V^*$.

For two topological spaces $X$ and $Y$ we will write $[X,Y]$ for the set of homotopy classes of maps $X\to Y$; if $X$ and $Y$ are pointed spaces
then $[X,Y]_*$ will denote the set of pointed homotopy classes of such maps.
\section{Hinich's closed model category structure}
Consider the category of dg cocommutative coassociative coalgebras which are \emph{cocomplete} in the sense that the filtration given by the kernels of the iterated comultiplication is exhaustive. For a homologically graded coalgebra $X$ there is defined a dgla ${\mathscr L}(X)$ which is the free Lie algebra on $\Sigma X$ and the differential is induced in the standard way by the comultiplication on $X$ and by the internal differential on $X$. Then Hinich proved in \cite{H} that this category could be turned into a closed model category where the weak equivalences are those maps $X\to Y$ of dg coalgebras for which ${\mathscr L}(X)\to{\mathscr L}(Y)$ are quasi-isomorphisms of dglas.

Note that the linear dual to a cocomplete coalgebra $X$ is a (non-unital) algebra $X^*$ which is \emph{formal} in the sense that \[X^*=\lim_\leftarrow X_n^*;\]
here $X^*_n$ is the cokernel of the n-fold multiplication map $X^{*\otimes n}\to X^*$. The filtration by the kernels of the maps $X^*\to X^*_n$ will be called the \emph{canonical} filtration on $X^*$; it is complete and Hausdorff. Some of the elementary properties of formal cdgas are discussed in the appendix to \cite{HL}.

It follows that there is a closed model category structure on formal cdgas which we will explicitly describe; this category will be denoted by $\falg$. Thus, an object in $\falg$ is a formal non-unital cdga and morphisms are required to be continuous with respect to the profinite topology. Note that sometimes it is  more convenient
to consider the category of \emph{unital} and \emph{augmented} formal cdgas; the morphisms will then be required to respect the unit and the augmentation. The latter category is clearly equivalent to the category of $\falg$. Indeed  a non-unital formal cdga $A$ determines an augmented unital one: $\tilde{A}=A\oplus\ground$, obtained from $A$ by adjoining a unit and conversely, the augmentation ideal $B_+$ of an augmented formal cdga $B$ is a non-unital formal cdga. We will use the term `formal cdga' to mean `non-unital formal cdga' unless stated otherwise. Note that the canonical filtration on $A$ corresponds to the filtration on $\tilde{A}$ by the powers of the maximal ideal in $\tilde{A}$.

Any formal cdga determines a dgla as follows.
\begin{defi}\label{har}
Let $A$ be a formal cdga and set $\L(A)$ to be a dgla whose underlying space is the free Lie algebra on $\Sigma A^*$ and the differential $d$ is defined as $d=d_{I}+d_{II}$; here $d_I$ is induced by the internal differential on $A$ and $d_{II}$ is determined by its restriction onto $\Sigma A^*$ which is in turn induced by the product map $A\otimes A\to A$.
\end{defi}
\begin{rem}
Note that since $A$ is formal its dual $A^*$ is discrete and thus, the dgla $\L(A)$ is a conventional dgla (with no topology). The construction $\L(A)$ is the continuous version of the Harrison complex associated with a cdga.
\end{rem}
\begin{defi}
A morphism $f:A\to B$ in $\falg$ is called
\begin{enumerate}
\item
a \emph{weak equivalence} if
${\mathscr L}(f):{\mathscr L}(B)\to{\mathscr L}(A)$
is a quasi-isomorphism of dglas;
\item
a \emph{fibration} if $f$ is surjective; if, in addition, $f$ is a weak equivalence then $f$ is called an \emph{acyclic fibration};
\item
a \emph{cofibration} if $f$ has the left lifting property with respect to all acyclic fibrations. That means that in any commutative square
\[
\xymatrix{A\ar_f[d]\ar[r]&C\ar^g[d]\\
A\ar@{-->}[ur]\ar[r]&D}
\]
where $g$ is an acyclic fibration there exists a dotted arrow making the whole diagram commutative.
\end{enumerate}
\end{defi}
\begin{theorem}
The category $\falg$ is a closed model category with fibrations, cofibrations and weak equivalences defined as above.
\end{theorem}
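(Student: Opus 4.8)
The plan is to deduce the result from Hinich's theorem \cite{H} (recalled at the beginning of this section) by linear duality, so that essentially no new homotopical input is needed. First I would make precise the contravariant equivalence between $\falg$ and the category of cocomplete dg cocommutative coalgebras. In one direction it sends a formal cdga $A$ to its continuous dual $A^*$, which is a discrete graded vector space: the continuous multiplication $A\otimes A\to A$ (completed tensor product) dualizes to a comultiplication $A^*\to A^*\otimes A^*$, with coassociativity and cocommutativity dual to associativity and commutativity, and the canonical profinite filtration on $A$ dualizes to the filtration of $A^*$ by the kernels of the iterated comultiplication, so that completeness and Hausdorffness of the former translate exactly into cocompleteness of the latter. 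In the other direction it sends a cocomplete coalgebra $X$ to the formal cdga $X^*$ described in the text. Continuity of a morphism $A\to B$ is automatic for the dual of a linear map of discrete spaces, so morphisms correspond in both directions, and $V^{**}\cong V$ for $V$ discrete or formal shows the two functors are mutually quasi-inverse. I would also record that, by the Definition of $\L(A)$ above, $\L(A)$ is by construction Hinich's Lie-algebra functor applied to the coalgebra $A^*$.

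Next I would transport the three distinguished classes of morphisms through this equivalence. From the previous paragraph, $f\colon A\to B$ is a weak equivalence in $\falg$ precisely when Hinich's functor applied to the dual coalgebra morphism $f^*\colon B^*\to A^*$ is a quasi-isomorphism of dglas, i.e. precisely when $f^*$ is a weak equivalence in Hinich's sense. A linear map of graded vector spaces is surjective if and only if its dual is injective; since Hinich's cofibrations are exactly the degreewise injective coalgebra morphisms, our fibrations (the surjections) are exactly the duals of Hinich's cofibrations, and consequently acyclic fibrations correspond to acyclic cofibrations. Finally, lifting properties are manifestly self-dual under the equivalence: $f$ has the left lifting property against all acyclic fibrations in $\falg$ if and only if $f^*$ has the right lifting property against all acyclic cofibrations in Hinich's category, so our cofibrations are exactly the duals of Hinich's fibrations.

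With this dictionary in hand, I would invoke the formal fact that the closed model category axioms are stable under equivalences of categories, and under a contravariant equivalence with the classes of fibrations and cofibrations interchanged: existence of all small limits and colimits, the two-out-of-three property, closure under retracts, and the two lifting axioms and two factorization axioms are each either self-dual or exchanged in the evident way. Applying this to Hinich's theorem via the equivalence $\falg\simeq(\text{cocomplete dg cocommutative coalgebras})^{\mathrm{op}}$ produces exactly the closed model structure on $\falg$ asserted here.

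I expect the main obstacle to be not homotopy-theoretic but the careful bookkeeping for the duality equivalence itself: checking that the completed tensor product on the formal side is the precise mirror of the ordinary tensor product on the coalgebra side, that continuous linear maps out of and into formal spaces are exactly the duals of ordinary linear maps (so that no morphisms are lost or created), that the profinite filtration is dual to the cocompleteness filtration, and that Hinich's cofibrations are indeed all of the degreewise monomorphisms. Once these points are settled — most of them are recorded in the appendix to \cite{HL} — the transfer of the model structure is automatic.
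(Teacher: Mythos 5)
Your proposal is correct and follows essentially the same route as the paper: the paper's proof consists precisely of observing that $\falg$ is anti-equivalent (via continuous linear duality) to Hinich's category of cocomplete dg coalgebras and transporting his closed model structure across this equivalence, with fibrations and cofibrations interchanged. Your write-up simply makes explicit the duality bookkeeping that the paper leaves implicit, which is exactly where it says the only work lies.
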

\begin{proof}
One has to observe only that the category $\falg$ is anti-equivalent to the category of cocomplete coalgebras of Hinich: given any cocomplete dg coalgebra $X$, its linear dual $X^*$ is a formal cdga, and the continuous dual to a formal cdga is a cocomplete dg coalgebra.
\end{proof}
The functor $\L$ from $\falg$ to the category $\L ie$ of dglas admits an adjoint functor $\C:\L ie\to\falg$ defined as follows.
For a dgla $\g$ set $\C(\g)=\hat{S}\Sigma^{-1} \g^*$, the completed symmetric algebra on $\g^*$.
The differential $d$ on $\C(\g)$ is defined as
$d=d_{I}+d_{II}$; here $d_I$ is induced by the internal differential on $\g$ and
$d_{II}$ is determined by its restriction onto $\Sigma^{-1} \g^*$ which is in turn induced by the bracket map $\g\otimes \g\to \g$.
\begin{rem}
The formal cdga $\C(\g)$ is otherwise known as the Chevalley-Eilenberg cohomology complex of the dgla $\g$.
\end{rem}
\begin{rem}
We will occasionally need the following mild generalization of the functors $\C$ and $\L$. Let $A$ be a cdga. For a dgla $\g$ we will call $A\otimes \g$ an \emph{ $A$-linear dgla}; similarly if $B$ is a formal cdga then  $A\otimes B$ is an \emph{$A$-linear formal cdga}. The morphisms of $A$-linear dgla and formal cdgas are required to be morphisms of dg $A$-modules. We define:
\[\C_A(A\otimes \g):=\hat{S}_A\Sigma^{-1}(A\otimes \g^*)\cong A\otimes\hat{S}\Sigma^{-1} \g^*\cong A\otimes\C(\g);\]
the differential in $\C_A(A\otimes \g)$ is the tensor product of the differential in $A$ and in $\C(\g)$. Thus, $\C_A(A\otimes \g)$ is an $A$-linear formal cdga. Likewise $\L_A(A\otimes B)$ is the $A$-linear dgla $A\otimes \L(B)$. Clearly the functors $\C_A$ and $\L_A$ are adjoint functors between the categories of $A$-linear dglas and $A$-linear dglas.
\end{rem}

The category $\L ie$ is itself a closed model category where weak equivalences are quasi-isomorphisms of dglas and fibrations are surjective maps.

The following result is proved in \cite{H}.
\begin{theorem}\label{Hinich}\
\begin{itemize}
\item
For any formal cdga $A$ and a dgla $\g$ here is a natural isomorphism of sets
\[
\Hom_{\falg}(\L(A),\g)\cong\Hom_{L ie}(\C(\g),A).
\]
\item
The functor $\L$ converts fibrations and acyclic fibrations in $\falg$ into cofibrations and acyclic cofibrations in $\L ie$ respectively.
\item
The functor $\C$ converts fibrations and acyclic fibrations in $\L ie$ into cofibrations and acyclic cofibrations in $\falg$ respectively.
\item
The adjunction maps $p=p(\g):\L\C(\g)\to\g$ and $q=q(A):\C\L(A)\to A$ are weak equivalences of dglas and formal cdgas respectively; thus
the functors $\C$ and $\L$ induce inverse equivalences of the homotopy categories of $\falg$ and $\L ie$.
\end{itemize}
\end{theorem}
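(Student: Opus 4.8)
The plan is to obtain all four assertions by translating Hinich's results \cite{H} through the anti-equivalence between the category $\mathrm{Coalg}$ of cocomplete dg cocommutative coalgebras and $\falg$ that was used to establish the closed model structure on $\falg$. Write $X\mapsto X^{*}$ for this anti-equivalence --- on $\falg$ it is $A\mapsto A^{*}$, the continuous dual, which is a cocomplete coalgebra --- recall that, as at the start of this section, $\mathscr{L}$ also denotes Hinich's functor $\mathrm{Coalg}\to\L ie$, and write $\mathscr{E}\colon\L ie\to\mathrm{Coalg}$ for its right adjoint, the Chevalley-Eilenberg chain coalgebra. From \cite{H} we use: that $\mathscr{L}\dashv\mathscr{E}$ is a Quillen adjunction (so $\mathscr{L}$ is left Quillen and $\mathscr{E}$ is right Quillen) which is moreover a Quillen equivalence; that the cofibrations of $\mathrm{Coalg}$ are exactly the injective maps, so that their duals --- the surjective maps --- are the fibrations of $\falg$ as defined above; and that the unit and counit of the adjunction are weak equivalences.

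First I would record the two functor identifications. Comparing Definition~\ref{har} with the description of $\mathscr{L}$ at the start of this section, $\L(A)$ and $\mathscr{L}(A^{*})$ are the same free Lie algebra on $\Sigma A^{*}$, the summand $d_{I}$ is in both cases the internal differential of the coalgebra $A^{*}$, and $d_{II}$, induced by the product $A\otimes A\to A$, is by continuity the summand induced by the coproduct of $A^{*}$; this is essentially a matter of definition, the one thing to verify being the signs dictated by the suspension conventions of the introduction. Dually, $\mathscr{E}(\g)$ is cofree cocommutative conilpotent on $\Sigma\g$, with continuous dual the completed symmetric algebra $\hat S\Sigma^{-1}\g^{*}$, and matching the bracket-induced codifferential with the derivation $d_{II}$ of $\C(\g)$ gives $\C(\g)\cong\mathscr{E}(\g)^{*}$. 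Thus $\L=\mathscr{L}\circ(-)^{*}$ and $\C=(-)^{*}\circ\mathscr{E}$; in particular both $\L$ and $\C$ are contravariant, consistent with their being mutually inverse equivalences of homotopy categories. Throughout one uses the equivalence, recorded above, between non-unital and augmented unital formal cdgas.

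Granting these identifications, the four bullets are direct translations. For the adjunction isomorphism, apply the anti-equivalence $\Hom_{\mathrm{Coalg}}(A^{*},Y)\cong\Hom_{\falg}(Y^{*},A)$ with $Y=\mathscr{E}(\g)$ to Hinich's adjunction $\Hom_{\L ie}(\mathscr{L}(A^{*}),\g)\cong\Hom_{\mathrm{Coalg}}(A^{*},\mathscr{E}(\g))$, obtaining $\Hom_{\L ie}(\L(A),\g)\cong\Hom_{\falg}(\C(\g),A)$ (so that in the displayed formula the two subscripts should be interchanged). For the second bullet, a fibration $A\to B$ in $\falg$ is surjective, so $B^{*}\to A^{*}$ is injective, i.e.\ a cofibration of coalgebras, whence $\L(B)=\mathscr{L}(B^{*})\to\mathscr{L}(A^{*})=\L(A)$ is a cofibration of dglas because $\mathscr{L}$ is left Quillen; if in addition $A\to B$ is a weak equivalence then $\mathscr{L}(B^{*})\to\mathscr{L}(A^{*})$ is a quasi-isomorphism by the definition of weak equivalence in $\falg$, so $B^{*}\to A^{*}$ is an acyclic cofibration of coalgebras and $\mathscr{L}$ sends it to an acyclic cofibration of dglas. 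The third bullet is the dual statement: $\mathscr{E}$ is right Quillen, so it carries an (acyclic) fibration $\g\to\h$ of dglas to an (acyclic) fibration $\mathscr{E}(\g)\to\mathscr{E}(\h)$ of coalgebras, and dualizing an (acyclic) fibration of coalgebras yields an (acyclic) cofibration $\C(\h)\to\C(\g)$ in $\falg$. Finally $p(\g)\colon\L\C(\g)=\mathscr{L}\mathscr{E}(\g)\to\g$ and $q(A)\colon\C\L(A)=\bigl(\mathscr{E}\mathscr{L}(A^{*})\bigr)^{*}\to A$ are, respectively, the counit of $\mathscr{L}\dashv\mathscr{E}$ and the dual of its unit; \cite{H} shows these to be weak equivalences, and since $\mathscr{L}\dashv\mathscr{E}$ is a Quillen equivalence it follows that $\L$ and $\C$ induce inverse equivalences of the homotopy categories of $\falg$ and $\L ie$.

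The argument brings nothing hard of its own: the substantive input --- existence of the model structures and the fact that the adjunction unit and counit are weak equivalences --- is Hinich's, and is simply quoted. What needs care is the bookkeeping: fixing the suspension and sign conventions so that the quadratic parts of the differentials genuinely correspond under linear duality, and keeping track of variances --- i.e.\ remembering that the anti-equivalence carries Hinich's model category structure on $\mathrm{Coalg}$ to the opposite model structure on $\falg$, so that cofibrations and fibrations are interchanged and all arrows reversed. Once those are settled, each statement falls out by inspection.
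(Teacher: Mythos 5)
Your proposal is correct and takes essentially the same route as the paper: the paper gives no argument of its own for Theorem \ref{Hinich} beyond citing Hinich, relying on the anti-equivalence between cocomplete dg coalgebras and $\falg$ already used to install the model structure, and your write-up simply makes that transfer (identification $\L=\mathscr{L}\circ(-)^*$, $\C=(-)^*\circ\mathscr{E}$, dualization exchanging cofibrations and fibrations) explicit. Your observation that the two Hom-subscripts in the first bullet should be interchanged is also correct --- that is a typo in the stated theorem, not a defect of your argument.
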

An important special case of a weak equivalence in $\falg$ is a \emph{filtered quasi-isomorphism}.
\begin{defi}\
\begin{itemize}
\item
An \emph{admissible filtration} on a formal cdga $A$ is a filtration of the form $A=F_1(A)\supset F_2(A)\supset\ldots$ which is Hausdorff, i.e. $\bigcap_{p>0}F_p(A)=0$. In addition, an admissible filtration is required to be \emph{multiplicative}, i.e. $F_p(A)\cdot F_q(A)\subset F_{p+q}(A)$. The associated graded cdga is defined as $\Gr(A)=\bigoplus_{p>0}F_p(A)/F_{p+1}(A)$.
\item
A map $f:A\to B$ in $\falg$ is a filtered quasi-isomorphism if $f$ induces a quasi-isomorphism on the associated graded cdgas for some admissible filtrations on $A$ and $B$.
\end{itemize}
\end{defi}
We will refer to a formal cdga endowed with an admissible filtration as simply `filtered formal cdga'. The canonical filtration on a formal cdga is an example of an admissible filtration. Note that an admissible filtration $F_p$ on a formal cdga $A$ is always \emph{complete}, i.e. ${\underrightarrow{\lim}}A/F_p(A)=A$ owing to the vanishing of ${\underrightarrow{\lim}}^1$ on the category of formal vector spaces.

We have the following result proved in the language of coalgebras in \cite{H}, Proposition 4.4.4.
\begin{prop}\label{filter}
A filtered quasi-isomorphism between filtered formal cdgas is necessarily a weak equivalence.
\end{prop}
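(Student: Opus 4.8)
The plan is to reduce the statement to a standard spectral-sequence comparison argument by passing through the functor $\L$. By the definition of weak equivalence in $\falg$, showing that $f\colon A\to B$ is a weak equivalence amounts to showing that $\L(f)\colon\L(B)\to\L(A)$ is a quasi-isomorphism of dglas. So the first step is to understand how an admissible filtration $F_p$ on a formal cdga $A$ induces a filtration on $\L(A)$. Since $\L(A)$ is the free Lie algebra on $\Sigma A^*$, and $A^*$ carries the filtration dual to $F_p$ (that is, the filtration by the annihilators $F_p(A)^{\perp}$, which is an \emph{increasing} exhaustive filtration on the discrete space $A^*$), one transports this to a complete decreasing filtration on $\L(A)$ by assigning to a bracket monomial the sum of the filtration degrees of its letters. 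Multiplicativity of $F_p$ (i.e. $F_p\cdot F_q\subset F_{p+q}$) is exactly what guarantees that the component $d_{II}$ of the differential on $\L(A)$, which is built from the product on $A$, respects this filtration; $d_I$ respects it because it comes from the internal differential of $A$, which preserves $F_p$.

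The second step is to identify the associated graded. One checks that $\Gr\L(A)$ is naturally the free Lie algebra on $\Sigma(\Gr A)^*$ with differential induced by the product on $\Gr A$ and the internal differential of $\Gr A$ — in other words $\Gr\L(A)\cong\L(\Gr A)$ as dglas, where on the right $\Gr A$ is regarded as a (discrete, graded) cdga and $\L$ is the evident discrete analogue of the Harrison-type construction. Since $f$ is by hypothesis a filtered quasi-isomorphism, $\Gr(f)\colon\Gr A\to\Gr B$ is a quasi-isomorphism of cdgas; as $\ground$ has characteristic zero and the free Lie algebra functor on a graded vector space is exact, $\L(\Gr f)$ is a quasi-isomorphism, hence so is $\Gr\L(f)$.

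The third step is the convergence argument. The filtrations on $\L(A)$ and $\L(B)$ are complete and Hausdorff (completeness of the filtration on $A$, noted in the excerpt to follow automatically from the vanishing of $\varprojlim^1$ on formal vector spaces, together with the fact that in each total degree only finitely many bracket-lengths contribute relative to any fixed filtration stage). Therefore the associated spectral sequences converge, and a morphism of filtered complexes inducing an isomorphism on $E_1$ induces an isomorphism on the (completed) homology; this gives that $\L(f)$ is a quasi-isomorphism, hence $f$ is a weak equivalence in $\falg$.

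The main obstacle is the bookkeeping in steps one and two: one must choose the induced filtration on the free Lie algebra so that (a) the differential is filtration-preserving, (b) the filtration is complete and exhaustive-enough for convergence, and (c) the associated graded is computed correctly as $\L(\Gr A)$. All three hinge on the interaction between the \emph{completed} structure of the formal cdga, the duality that turns a decreasing complete filtration on $A$ into an increasing exhaustive one on $A^*$, and the grading by bracket length; once the filtration is set up correctly the rest is the routine Eilenberg–Moore-style comparison. (Alternatively one can simply invoke the dual of Hinich's Proposition 4.4.4 cited in the excerpt, but the argument above is the natural self-contained route.)
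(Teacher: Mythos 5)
The paper gives no argument of its own here (it simply cites Hinich's Proposition 4.4.4), and your overall route is the expected self-contained one: transport the admissible filtration on $A$ to a filtration of $\L(A)$ by total weight of the letters of a bracket monomial, identify the associated graded, and compare spectral sequences. Two bookkeeping remarks first: since $F_1(A)=A$, the dual filtration on $A^*$ starts at $0$ and every letter has weight at least one, so the induced filtration on $\L(A)$ is \emph{increasing}, exhaustive and bounded below, not ``complete decreasing''; this actually makes your convergence step easier than you suggest. Also, exhaustiveness of the dual filtration on $A^*$ uses that the subspaces $F_p(A)$ are closed (equivalently, that the filtration is dual to a coalgebra filtration), which should be said explicitly.

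The genuine gap is in your second step: you conclude that $\L(\Gr f)$ is a quasi-isomorphism from ``$\Gr(f)$ is a quasi-isomorphism of cdgas and the free Lie algebra functor is exact in characteristic zero''. Exactness of the free Lie (Schur) functors only controls the \emph{internal} differential, whereas the differential on $\Gr\L(A)\cong\L(\Gr A)$ also contains the Harrison part $d_{II}$ dual to the product of $\Gr A$; and it is false in general that $\L$ carries quasi-isomorphisms of cdgas to quasi-isomorphisms of dglas --- that failure is precisely why weak equivalences differ from quasi-isomorphisms (for instance, the quasi-isomorphism between $\C(\g)$ and its cohomology for $\g$ semisimple, discussed in Section 4, is not a weak equivalence). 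So as written this step assumes a statement of the same nature as the proposition being proved. The repair is where admissibility enters essentially: $\Gr A$ is concentrated in weights $p\geq 1$ and its product adds weights, so in $\L(\Gr A)$ a bracket monomial of length $n$ has weight at least $n$; hence each weight-homogeneous component (the weight decomposition is a decomposition of complexes) carries a \emph{finite} secondary filtration by bracket length, whose associated graded differential is the internal one, and only there does exactness of the free Lie functor apply; finiteness gives convergence, so $\L(\Gr f)$ is a quasi-isomorphism weight by weight. With that inner comparison supplied, your outer spectral sequence argument closes the proof and the proposition follows.
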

\noproof
\section{$L_\infty$ algebras}
The notion of a weak equivalence in $\falg$ is somewhat obscure and one wants to have a more explicit characterization of them.  In this section we will give such a characterization for \emph{cofibrant} cdgas and also show that the latter are, effectively, the same as  $L_\infty$ algebras.
\begin{defi}\
\begin{itemize}
\item
Let $(V,d)$ be a (homologically) graded dg vector space; then an $L_\infty$ algebra structure on $V$ is a continuous derivation $m$ of the completed symmetric algebra $\hat{S}\Sigma^{-1} V^*$ such that $m$ has no constant and linear terms and $(m+d)^2=0$. We will call $\hat{S}\Sigma^{-1} V^*$ supplied with the differential $m+d$ the \emph{representing formal cdga} of $V$. The homogeneous components of $m$ will be denoted by $m_i$, so that $m_i:\hat{S}^i\Sigma^{-1}V^*\to \Sigma^{-1}V^*$.
If $V$ itself has vanishing differential, then $(V,m)$ is called a \emph{minimal} $L_\infty$ algebra.
\item
For two $L_\infty$ algebras $(V,m_V)$ and $(U,m_U)$ an $L_\infty$ \emph{morphism} $U\to V$ is a continuous map of formal cdgas $\hat{S}\Sigma^{-1} V^*\to\hat{S}\Sigma^{-1} U^*$. It is called an $L_\infty$ quasi-isomorphism if it induces a quasi-isomorphism of dg vector spaces $\Sigma^{-1} U^*\to\Sigma^{-1} V^*$ (or equivalently of dg vector spaces $U\to V$).
\end{itemize}
\end{defi}
One of the central results about $L_\infty$ algebras is the \emph{decomposition theorem} for $L_\infty$ algebras (Lemma 4.9 of \cite{Kon}) which was later generalized and reproved by several authors. To formulate it, let us introduce an $L_\infty$ algebra represented by the formal cdga $L(x,y)=\hat{S}(x,y)$ with $|y|=|x|+1$ and $m(x)=y$. This is an \emph{elementary linear contractible} $L_\infty$ algebra. A general linear contractible $L_\infty$ algebra is a direct product of elementary ones (this corresponds to the tensor product of representing formal cdgas). Then one has the following result whose proof could be found in the mentioned paper of Kontsevich; a non-inductive proof, also valid in the operadic generality is contained in \cite{CL'}.
\begin{theorem}\label{decomp}
Any $L_\infty$ algebra $(V,m)$ is $L_\infty$ isomorphic to a direct product of a linear contractible $L_\infty$ algebra and a minimal one. The latter is determined uniquely up to a non-canonical isomorphism.
\end{theorem}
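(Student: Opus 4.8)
The plan is to argue in the dual language of representing formal cdgas and to prove existence by Kontsevich's word-length induction; uniqueness will then be deduced from the fact that an $L_\infty$ quasi-isomorphism whose linear part is invertible is itself an $L_\infty$ isomorphism.

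First I would set $U=\Sigma^{-1}V^*$, so that the $L_\infty$ structure on $V$ is the same as a differential $D$ on $A=\hat{S}U$ of the form $D=D_1+D'$, where $D_1$ is the derivation extending the internal differential of $U$ (and preserves word length) while $D'$ strictly raises word length. Since $\ground$ is a field of characteristic zero, the complex $(U,D_1)$ splits as $U=U_h\oplus U_c$ with $D_1$ vanishing on $U_h$ and $(U_c,D_1)$ acyclic; fix a contracting homotopy $h$ on $U_c$. The goal is to produce an automorphism $\phi$ of the formal cdga $A$ — equivalently an $L_\infty$ automorphism of $V$ carrying $m$ to a new structure — after which $D$ preserves both the subalgebra $\hat{S}U_h$, on which it induces a differential with no linear term, and the subalgebra $\hat{S}U_c$, on which it induces the acyclic differential $D_1$. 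Identifying $A\cong\hat{S}U_h\otimes\hat{S}U_c$ (completed tensor) then exhibits $(V,m)$ as a product of a minimal $L_\infty$ algebra and a linear contractible one, which is the existence statement.

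To build $\phi$ I would filter $\Der(A)$ by the amount a derivation raises word length and construct $\phi$ as the infinite composite $\cdots\phi_3\phi_2$, where $\phi_N=\exp(\xi_N)$ with $\xi_N$ raising word length by $N-1$; this composite converges because $A$ is complete for the word-length filtration. Assume inductively that, after conjugating $D$ by $\phi_2,\dots,\phi_{N-1}$, the failure of the desired block form is concentrated in the "off-diagonal" part $O_N$ of the arity-$N$ structure map (the components carrying $U_h$-generators to terms involving $U_c$, together with the higher terms of $D$ on $U_c$-generators). Expanding the Maurer--Cartan identity $D^2=0$ in word-length-raising degree $N-1$ shows that $O_N$ is a cocycle for the differential induced by $D_1$ on the relevant summand of $\Hom(\hat{S}U,U)$; because $h$ contracts $U_c$, that summand is acyclic, so $O_N=[D_1,\xi_N]$ for some $\xi_N$, and conjugating $D$ by $\phi_N=\exp(\xi_N)$ removes $O_N$ without affecting any structure map of arity $<N$. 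This inductive step — singling out the correct $O_N$, checking from $D^2=0$ that it is $D_1$-closed, and verifying that the correction does not disturb the lower-arity terms already in normal form — is the part I expect to require the most care; the rest is formal.

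For uniqueness, suppose $(V,m)$ admits two such decompositions $(V_1,m_1)\times C_1$ and $(V_2,m_2)\times C_2$ with $m_1,m_2$ minimal and $C_1,C_2$ linear contractible. Composing the inclusion $(V_1,m_1)\hookrightarrow(V_1,m_1)\times C_1$, the given $L_\infty$ isomorphisms $(V_1,m_1)\times C_1\cong(V,m)\cong(V_2,m_2)\times C_2$, and the projection $(V_2,m_2)\times C_2\to(V_2,m_2)$ yields an $L_\infty$ morphism $(V_1,m_1)\to(V_2,m_2)$; being a composite of strict morphisms between products and genuine equivalences, it is an $L_\infty$ quasi-isomorphism, and its linear part is the comparison of the two identifications of $H(V,d)$ with $V_1$ and $V_2$, hence an isomorphism. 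Finally, an $L_\infty$ quasi-isomorphism with invertible linear part is an $L_\infty$ isomorphism: the associated morphism of formal cdgas has invertible linear term, and one inverts it by solving for its arity-$N$ component in terms of the lower ones, once again by induction on word length. Therefore $(V_1,m_1)\cong(V_2,m_2)$, and the isomorphism is not canonical because it depends on the chosen splitting $U=U_h\oplus U_c$ and homotopy $h$.
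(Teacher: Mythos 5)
The paper itself gives no proof of this theorem: it is stated with references to Kontsevich (Lemma 4.9 of \cite{Kon}) and to the non-inductive, operadic proof of \cite{CL'}. Your argument is essentially Kontsevich's inductive proof, carried out in the dual picture of the representing formal cdga $\hat{S}U$ with $U=\Sigma^{-1}V^*$ — split $U=U_h\oplus U_c$, kill the off-diagonal arity-$N$ terms by conjugating with $\exp(\xi_N)$, using that $[D_1,-]$ is acyclic on the summand of derivations involving $U_c$ (in characteristic zero symmetric powers of a contractible complex are contractible) and completeness of the word-length filtration for convergence of the infinite composite — and your uniqueness step, reducing to the fact that an $L_\infty$ quasi-isomorphism of minimal $L_\infty$ algebras has invertible linear part and is therefore an $L_\infty$ isomorphism, is the standard one; the sketch is sound, the only substantive detail left to write out being the one you flag, namely that $O_N$ is $[D_1,-]$-closed, which follows from the word-length-$(N-1)$ component of $D^2=0$ together with the inductive block form in arities below $N$.
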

\noproof
\begin{prop}\label{char}\
\begin{enumerate}
\item
The map $\hat{S}\Sigma^{-1} V^*\to\hat{S}\Sigma^{-1} U^*$ representing an $L_\infty$ quasi-isomorphism $(U,m_U)\to(V,m_V)$ is a weak equivalence in $\falg$; conversely any weak equivalence $\hat{S}\Sigma^{-1} V^*\to\hat{S}\Sigma^{-1} U^*$ represents an $L_\infty$ quasi-isomorphism of the corresponding $L_\infty$ algebras.
\item
Any formal cdga $\hat{S}\Sigma^{-1} V^*$ determining an $L_\infty$ structure on a space $V$ is a cofibrant object in $\falg$. Conversely, any cofibrant formal cdga is a formal cdga $\hat{S}\Sigma^{-1} V^*$ representing an $L_\infty$ algebra.
\end{enumerate}
\end{prop}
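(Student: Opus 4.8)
The plan is to prove the four assertions in the order: ``$L_\infty$ quasi-isomorphism implies weak equivalence'' in (1), then the two halves of (2), and finally ``weak equivalence implies $L_\infty$ quasi-isomorphism'' in (1), the last being easiest once cofibrancy is available. For the first implication I would put the canonical filtration on $\hat S\Sigma^{-1}V^*$ and $\hat S\Sigma^{-1}U^*$; on a completed free cdga it is precisely the filtration $F_p=\hat S^{\geq p}\Sigma^{-1}V^*$ by symmetric powers, and it is admissible. Every morphism of formal cdgas preserves canonical filtrations, and expanding a cdga morphism on the generators into homogeneous components $g=g_1+g_2+\cdots$ (with $g_i$ landing in $\hat S^i$) one finds that the induced map on $\Gr_p$ is $S^p(g_1)$, where $g_1\colon\Sigma^{-1}V^*\to\Sigma^{-1}U^*$ is the linear part; similarly, since the $L_\infty$ differential $m$ has no constant or linear term, the differential induced on the associated graded is the one carried by the linear part, so $\Gr\hat S\Sigma^{-1}V^*$ is the free cdga on the complex $(\Sigma^{-1}V^*, d)$. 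By definition $g$ represents an $L_\infty$ quasi-isomorphism exactly when $g_1$ is a quasi-isomorphism; then every $S^p(g_1)$ is a quasi-isomorphism (in characteristic zero $S^p$ is a retract of the $p$-th tensor power via the symmetrisation idempotent, and tensor powers and retracts preserve quasi-isomorphisms), hence so is $\prod_p S^p(g_1)=\Gr(g)$ since arbitrary products are exact. So $g$ is a filtered quasi-isomorphism and Proposition~\ref{filter} applies.

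For cofibrancy in (2) the plan is to realise $A=\hat S\Sigma^{-1}V^*$ as (a retract of) a cell complex. Optionally one invokes the decomposition theorem (Theorem~\ref{decomp}) to split off the linear contractible part, which is the free cdga on an acyclic complex $W_c$ and whose cofibrancy is immediate: given an acyclic fibration $p\colon C\to D$ and a map $\hat S W_c\to D$, choose a basis of $W_c$ in pairs $a_i,b_i$ with $\partial a_i=b_i$, lift the images of the $a_i$ arbitrarily along the surjection $p$, and let the lifts of the $b_i$ be their differentials; this assembles to a lift by continuity and the universal property of $\hat S$. For the remaining (minimal, or more generally arbitrary) $A$, the key structural fact is that the internal differential preserves the canonical filtration whereas the higher brackets $m$ strictly raise it. One then verifies the lifting property against an acyclic fibration $p\colon C\to D$ by successive approximation: starting from any linear lift along the surjection $p$, correct it inductively, the correction at each stage taking values in $\ker p$ and being available because $p$ is an acyclic fibration, so the obstruction to extending the lift across the next layer of the canonical filtration vanishes; the higher brackets entering only in strictly higher filtration degree, the corrections converge in the complete canonical topology. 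This argument simultaneously presents $A$ as a cell complex assembled from the unit by adjoining polynomial generators, which yields the converse in (2): a cofibrant object is a retract of such a cell complex, and a cell complex is a completed free cdga $\hat S W'$ carrying a differential of the allowed shape, i.e.\ the representing cdga of an $L_\infty$ algebra.

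To finish the converse in (2) one must check that a retract in $\falg$ of a completed free cdga is again completed free. Writing $W=A_+/A_+^2$ for the cotangent space of the retract $A$, the natural morphism $\hat S W\to A$ preserves canonical filtrations and, compared with the analogous morphism for the ambient cell complex, induces an isomorphism on associated gradeds, hence is an isomorphism; here the complete, pro-nilpotent nature of formal cdgas is essential (morally this is Cohen's structure theorem). The transported differential then has no linear or constant output in the bottom stratum, so it is of the form $d+m$ with $m$ raising the canonical filtration, defining an $L_\infty$ structure on $\Sigma W^*$, so $A\cong\hat S\Sigma^{-1}V^*$. The converse in (1) now follows quickly: a weak equivalence $g\colon\hat S\Sigma^{-1}V^*\to\hat S\Sigma^{-1}U^*$ is a map between cofibrant objects (by the foregoing) which are also fibrant, since every object of $\falg$ is fibrant (the map to the terminal object $0$ being surjective); hence $g$ is a homotopy equivalence. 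Choosing a homotopy inverse and homotopies and passing to linear parts --- a homotopy of cdga morphisms between completed free cdgas restricting, on linear parts, to a chain homotopy --- one obtains chain homotopies $g_1h_1\simeq\id$ and $h_1g_1\simeq\id$; thus $g_1$ is a chain homotopy equivalence, in particular a quasi-isomorphism, i.e.\ $g$ represents an $L_\infty$ quasi-isomorphism.

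The main obstacle is the cell-complex structure / lifting property for $\hat S\Sigma^{-1}V^*$ with $V$ an arbitrary $L_\infty$ algebra: the higher brackets are not lower-triangular with respect to any basis (nor is the linear differential, which raises the cohomological degree), so generators cannot simply be adjoined one at a time with differentials in the sub-cdga generated by earlier ones, and the argument has to be organised as an inductive approximation along the canonical filtration --- completeness of that filtration being precisely what guarantees convergence. A secondary, but genuine, technical point is that a retract in $\falg$ of a completed free cdga is again completed free: this uses the complete/local structure of formal cdgas and has no analogue for retracts of ordinary polynomial algebras.
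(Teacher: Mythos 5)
Your first half of (1) is fine and is exactly the paper's route: the canonical filtration is the word-length filtration, an $L_\infty$ quasi-isomorphism induces $\hat S(g_1)$ on associated gradeds, and Proposition~\ref{filter} finishes. The genuine gap is in your proof of cofibrancy in (2). Your successive-approximation lifting along the canonical filtration needs, at each stage, the vanishing of an obstruction valued in the cohomology of the corresponding filtration layer of $\Ker p$ (or of $\Ker p\cap F_nC$), and being an acyclic fibration in $\falg$ does not supply this: it tells you that $\L(p)$ is a quasi-isomorphism, hence that $p$ is a surjective quasi-isomorphism and $\Ker p$ is acyclic as a complex, but \emph{not} that $p$ is a filtered quasi-isomorphism for the canonical filtrations -- Remark~\ref{filt}(2) of the paper exhibits a surjective weak equivalence which is not, so the graded pieces of the kernel can carry cohomology and the layerwise obstructions have no reason to die. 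Worse, if your argument really only used ``surjective with acyclic kernel'', it would prove that every $\hat S\Sigma^{-1}V^*$ has the left lifting property against all surjective quasi-isomorphisms, i.e.\ is cofibrant in the Quillen--Neisendorfer structure; Section 4 of the paper explicitly states this is false (e.g.\ $\C(\g)$ for $\g$ semisimple). So the hard input cannot be avoided this way. The paper gets it from Theorem~\ref{Hinich}: $\C$ of any dgla is cofibrant (this is Hinich's deep adjunction result, not an obstruction calculation), and then, after splitting off the linear contractible factor via Theorem~\ref{decomp} (your treatment of that factor is fine), exhibits the minimal part $A$ as a retract of $\C\L(A)$ using that the counit is an $L_\infty$ quasi-isomorphism and that minimality forces the composite $q\circ i$ to be an isomorphism.

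Two secondary points. For the converse of (2) you appeal to ``cofibrant $=$ retract of a cell complex'', which presupposes a cellular description of Hinich cofibrations that you have not established; the paper avoids this by simply choosing a section of the surjective weak equivalence $\C\L(A)\to A$ and then invoking (as you also do, in more detail) that a continuous retract of a completed free cdga is completed free. For the converse of (1), your Whitehead-style argument (weak equivalence between cofibrant--fibrant objects is a homotopy equivalence, then pass to linear parts of the homotopies) can be made to work, but it needs the identification of abstract homotopies with Sullivan homotopies $A\to B[z,dz]$ -- this is Theorem~\ref{path}, proved only later in the paper -- and it also leans on your gapped proof of (2). The paper's argument is independent of all this: it computes $H_*\bigl[\L(\hat S\Sigma^{-1}V^*)\bigr]\cong H_*(V)$ by the spectral sequence of the word-length filtration (the $d_1$ being the Harrison differential, whose homology on a completed free cdga reduces to the generators) and compares the two spectral sequences under $\L(f)$. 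You should either adopt that computation or restructure your dependencies accordingly.
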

\begin{proof}
Note first, that an $L_\infty$ quasi-isomorphism $\hat{S}\Sigma^{-1} V^*\to\hat{S}\Sigma^{-1} U^*$ is a filtered quasi-isomorphism with respect to the canonical filtrations on $\hat{S}\Sigma^{-1} V^*$ and $\hat{S}\Sigma^{-1} U^*$ and it follows by Proposition \ref{filter}  that it is a weak equivalence. Conversely, let $f:\hat{S}\Sigma^{-1} V^*\to\hat{S}\Sigma^{-1} U^*$ be a weak equivalence between
two formal cdgas; such a map is clearly an $L_\infty$ morphism and we need to show that $f$ induces a quasi-isomorphism on the spaces of indecomposables: $\Sigma^{-1} V^*\to\Sigma^{-1} U^*$. Since $f$ is a weak equivalence the dgla map
\[
\L(f):\L(\hat{S}\Sigma^{-1} V^*)\to \L(\hat{S}\Sigma^{-1} U^*)
\]
is a quasi-isomorphism.

Let us now compute the homology of $\L(\hat{S}\Sigma^{-1} V^*)$ and $\L(\hat{S}\Sigma^{-1} U^*)$. The filtration by the powers of the augmentation ideal on  $\hat{S}\Sigma^{-1} V^*$ determines an increasing filtration on $\L(\hat{S}\Sigma^{-1} V^*)$. Since the internal differential of the formal cdga $\hat{S}\Sigma^{-1} V^*$ vanishes on its associated graded it follows that the differential $d_1$ in the spectral sequence converging to the homology of $\L(\hat{S}\Sigma^{-1} V^*)$ is the Harrison differential $d_{II}$ (see Definition \ref{har}). Since $\hat{S}\Sigma^{-1} V^*$ is a (completed) free graded commutative algebra the homology with respect to this differential reduces to $V$. Thus, the spectral sequence collapses and
\[
H^*[\L(\hat{S}\Sigma^{-1} V^*]\cong H_*(V).
\]
Arguing similarly, we obtain
\[
H^*[\L(\hat{S}\Sigma^{-1} U^*]\cong H_*(U)
\]
and since the spectral sequences for $\L(\hat{S}\Sigma^{-1} V^*)$ and $\L(\hat{S}\Sigma^{-1} U^*)$ map into one another under $\L(f)$, we conclude that
$f$ induces a quasi-isomorphism $\Sigma^{-1} V^*\to\Sigma^{-1} U^*$ as required. Part (1) is therefore proved.

Let us prove (2). Consider an $L_\infty$ algebra $(V,m)$ and recall that by the decomposition theorem it is isomorphic to a product of linear contractible $L_\infty$ algebras and a minimal one. Since linear contractible $L_\infty$ algebras are obviously represented by cofibrant formal cdgas we may assume that $(V,m)$ is minimal from the start. Consider its representing formal cdga $A=\hat{S}\Sigma^{-1} V^*$ and the adjunction map
$q:\C\L(A)\to A$. The latter map represents an $L_\infty$ quasi-isomorphism and so it has a quasi-inverse $i:A\to\C\L(A)$. We see that $q\circ i$ represents a quasi-isomorphism of $A$ onto itself which is chain homotopic to the identity when restricted onto the indecomposables of $A$; since $A$ represents a minimal $L_\infty$ algebra $q\circ i$ must be the identity and so $A$ is a retract of $\C\L(A)$. Since the latter is cofibrant so is $A$.

Conversely, for any cofibrant formal cdga $A$ there is a surjective weak equivalence $\C\L(A)\to A$ which will necessarily admit a section (since $A$ is cofibrant) exhibiting $A$ as a retract of a formal cdga representing an $L_\infty$ algebra. Finally, it is clear that a (continuous) retract of a formal power series algebra is a formal power series algebra itself which concludes the proof of (2).
\end{proof}
\begin{rem}\
\begin{enumerate}\label{filt}
\item
We see, therefore, that as long as we are interested in the homotopy theoretical problems in the category $\falg$, we can restrict ourselves to considering only those formal cdgas which represent $L_\infty$ algebras; a weak equivalence between such objects will then be a usual $L_\infty$ quasi-isomorphism. One could go even further and consider only those formal cdgas representing minimal $L_\infty$ algebras; a weak equivalence will in this case reduce to a mere isomorphism.
\item
It is not true, in general, that any weak equivalence in $\falg$ must be a filtered quasi-isomorphism with respect to canonical filtrations (although for cofibrant formal cdgas it is true by Proposition \ref{char}). Here is a simple example. Let $A$ be the formal cdga spanned by a single vector $x$ with $|x|=1$ and with zero multiplication (the augmented version of $A$ is the algebra $\tilde{A}=\ground[x]/x^2$). Then it is easy to see that $\C\L(A)$ is the formal cdga $B=\ground[x,y]_+$ with $|y|=1$ and $d(y)=x^2$. Further, the obvious map $f:B\to A$ sending $y$ to zero is a weak equivalence in $\falg$, but it is not a filtered quasi-isomorphism with respect to the canonical filtration on $B$. The `correct' filtration on $B$ is specified by $F_1(B)=B$ and $F_p(B)=(yx^{p-2},x^p), p\geq 2$, the ideal in $\ground[x,y]$ generated by $yx^{p-2}$ and $x^p$ rather than the $p$th power of the maximal ideal. Under this filtration the map $f$ is a filtered quasi-isomorphism.
\item
On the other hand, it is true that if $f:A\to B$ is a weak equivalence in $\falg$ then $A$ and $B$ can be connected by a zig-zag of maps which are filtered weak equivalences (but not necessarily with respect to the canonical filtrations). Indeed, we have the following commutative diagram:
\[
\xymatrix
{
\C\L(A)\ar^{\C\L(f)}[r]\ar[d]&\C\L(B)\ar[d]\\
A\ar^f[r]&B
}
\]
where the vertical arrows are the adjunction morphisms. These adjunction morphisms are filtered quasi-isomorphisms by \cite{H}, Proposition 4.4.3 and the upper horizontal map is a filtered quasi-isomorphism as a weak equivalence between cofibrant formal cdgas.
\end{enumerate}
\end{rem}
Next we will discuss the notion of a \emph{homotopy} in the category $\falg$. To this end, consider the cdga $\ground[z,dz]$ the free graded commutative algebra on the generators $z$ and $dz$ with $|z|=0,|dz|=1$ and $d(z):=dz$; this is the familiar de Rham algebra of forms on an interval. Specializing $z$ to $1$ and $0$ gives cdga maps $|_1,|_0:\ground[z,dz]\to \ground$. Given a formal cdga $A$, we will consider $A[z,dz]$, the tensor product of $A$ and $\ground[z,dz]$. There are two specialization maps $A[z,dz]\to A$ which we will denote $|_1$ and $|_0$ as above.

\begin{defi}\
\begin{enumerate}
\item
Let $A,B$ be formal cdgas and $f,g:A\to B$ be maps in $\falg$.  Then $f$ and $g$ are said to be \emph{Sullivan homotopic} if there exists a continuous cdga map $h: A\to B[z,dz]$ such that $|_0\circ h=f$ and $|_1\circ h=g$.
\item
Two formal cgdas $A$ and $B$ are called \emph{Sullivan homotopy equivalent} if there are maps $f:A\to B$ and $g:B\to A$ such that $f\circ g$ and $g\circ f$ are Sullivan homotopic to $\id_B$ and $\id_A$ respectively.
\end{enumerate}
\end{defi}
Recall that one also have a similar notion of Sullivan homotopy in the category of dglas: two dgla maps $f,g:\g\to \h$
are Sullivan homotopic if there is a dgla map $\g\to\h[z,dz]$
restricting to $f$ and $g$ at $z=0$ and $z=1$ respectively. Furthermore, the dgla $\h[z,dz]$ is a path object for $\h$ and thus, the Sullivan homotopy for dglas is an instance of the closed model theoretic notion  of \emph{right homotopy}. Similarly, one would like to be able to view $B[z,dz]$ as a path object for $B$ and a Sullivan homotopy as a right homotopy in the closed model category of formal cdgas.  Such an interpretation is not available since $\ground[z,dz]$ is \emph{not} a formal cdga. Fortunately, the following result shows that for all practical purposes one can treat $B[z,dz]$ as if it were a path object for $B$.
\begin{theorem}\label{path}
Let $A$ be a cofibrant formal cdga, $B$ be an arbitrary formal cdga and $f,g:A\to B$ be two maps in $\falg$.  Then $f$ and $g$ are Sullivan homotopic if and only if they are homotopic, i.e. represent the same maps in the homotopy category of $\falg$.
Thus, the set of equivalence classes of Sullivan homotopic maps $A\to B$ is bijective with the set of maps from $A$ into $B$ in the homotopy category of $\falg$. This set will be denoted by $[A,B]_*$.
\end{theorem}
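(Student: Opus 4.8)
The plan is to transfer the question to the Quillen-equivalent category $\L ie$ of dglas, where $\h[z,dz]$ \emph{is} a genuine path object and the statement is classical, and then to transfer the conclusion back using the $\ground[z,dz]$-linear variants $\C_{\ground[z,dz]}$, $\L_{\ground[z,dz]}$ of the adjoint functors from Theorem~\ref{Hinich}.

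First I would collect the model-category inputs. Every object of $\falg$ is fibrant, since a fibration is merely a surjection; hence for $A$ cofibrant two maps $f,g\colon A\to B$ coincide in $\operatorname{Ho}(\falg)$ if and only if they are right homotopic with respect to some (equivalently, any) path object for $B$, and this relation is an equivalence relation compatible with composition, so that the quotient of $\Hom_{\falg}(A,B)$ by right homotopy computes $\Hom_{\operatorname{Ho}(\falg)}(A,B)$. Dually, in $\L ie$ the same holds, and moreover $\h[z,dz]$ is a path object for any dgla $\h$: the inclusion of constants $\h\to\h[z,dz]$ is a quasi-isomorphism while $(|_0,|_1)\colon\h[z,dz]\to\h\times\h$ is surjective, hence a fibration. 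I would also note that $\L(B)$ is cofibrant — the fibration $B\to 0$ is carried by $\L$ to a cofibration $0=\L(0)\to\L(B)$ — and that every dgla is fibrant. Finally, a cofibrant $A$ is a retract of $\C\L(A)=\C(\L(A))$ (proof of Proposition~\ref{char}); composing with the retraction and its section, one checks that $f,g$ are Sullivan homotopic (respectively, equal in $\operatorname{Ho}(\falg)$) if and only if their composites with the retraction are. So I may assume from the start that $A=\C(\g)$ for a dgla $\g$.

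The core of the argument is the claim that, for $f,g\colon\C(\g)\to B$, a Sullivan homotopy from $f$ to $g$ is \emph{the same datum} as a Sullivan homotopy of dgla maps between the adjuncts $\widehat f,\widehat g\colon\L(B)\to\g$. Indeed, a Sullivan homotopy $h\colon\C(\g)\to B[z,dz]$ is the same as its unique $\ground[z,dz]$-linear extension $\C(\g)\otimes\ground[z,dz]=\C_{\ground[z,dz]}(\g\otimes\ground[z,dz])\to B\otimes\ground[z,dz]$, which by the adjunction $\C_{\ground[z,dz]}\dashv\L_{\ground[z,dz]}$ is the same as a $\ground[z,dz]$-linear dgla map $\L(B)\otimes\ground[z,dz]\to\g\otimes\ground[z,dz]$, i.e.\ (restricting the source to constants) the same as a dgla map $K\colon\L(B)\to\g[z,dz]$. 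Since this adjunction is natural in the base ring, specialising along $|_0,|_1\colon\ground[z,dz]\to\ground$ recovers the basic adjunction of Theorem~\ref{Hinich} and identifies the endpoints of $K$ with $\widehat f$ and $\widehat g$. Granting the claim, both directions of the theorem follow at once: $f,g$ are Sullivan homotopic $\iff$ $\widehat f,\widehat g$ are Sullivan homotopic as dgla maps $\iff$ (since $\L(B)$ is cofibrant, $\g$ is fibrant, and $\g[z,dz]$ is a path object) $\widehat f=\widehat g$ in $\operatorname{Ho}(\L ie)$ $\iff$ (since $\C$ and $\L$ induce inverse equivalences of homotopy categories) $f=g$ in $\operatorname{Ho}(\falg)$. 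The last sentence of the statement is then immediate from the description of $\operatorname{Ho}(\falg)$ recalled in the previous paragraph.

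The one genuinely delicate point — and the only place any computation is needed — is the compatibility invoked above: one must check that forming the $\ground[z,dz]$-linear extension of a map and then applying $\C_{\ground[z,dz]}\dashv\L_{\ground[z,dz]}$ commutes, up to the evident identifications, with base change along the ring maps $\ground\hookrightarrow\ground[z,dz]$ and $|_0,|_1\colon\ground[z,dz]\to\ground$; equivalently, that this is an adjunction ``over the interval'' restricting correctly at the two endpoints, so that the adjuncts of $f$ and $g$ really are the specialisations of $K$. This is a routine naturality verification. Everything else is a formal consequence of the closed-model axioms, of Theorem~\ref{Hinich}, and of the classical fact, quoted before the statement, that $\h[z,dz]$ is a path object for a dgla $\h$.
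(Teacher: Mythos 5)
Your argument is correct and follows essentially the same route as the paper's: both proofs transfer Sullivan homotopies through the $\ground[z,dz]$-linear $\L/\C$ machinery to the genuine path object $\h[z,dz]$ in $\L ie$, and both exploit cofibrancy of $A$ via a section of the acyclic fibration $\C\L(A)\to A$ to return to $A$. The difference is purely organizational: you perform the retract reduction to $A=\C(\g)$ up front and package the transfer as one adjunction bijection over $\ground[z,dz]$, whereas the paper applies $\L_{\ground[z,dz]}$ and $\C_{\ground[z,dz]}$ separately in the two directions and composes with the section $i\colon A\to\C\L(A)$ at the end.
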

\begin{proof}
Let $h:A\to B[z,dz]$ be a homotopy between $f$ and $g$. This homotopy can be viewed as a $\ground[z,dz]$-linear map $A[z,dz]\to B[z,dz]$,
i.e. as a map of $\ground[z,dz]$-linear formal cdgas. Applying the functor $\L_{\ground[z,dz]}$ to it we obtain a $\ground[z,dz]$-linear map
\[\L(B)[z,dz]\cong \L_{\ground[z,dz]}(B[z,dz])\to \L_{\ground[z,dz]}(A[z,dz])\cong\L(A)[z,dz]\] which is the same as a dgla homotopy
\[\L(B)\to\L(A)[z,dz].\] Clearly at $z=0,1$ the above homotopy specializes to the dgla maps $\L(f), \L(g):\L(B)\to \L(A)$ respectively.
It follows that the maps $\L(f)$ and $\L(g)$ are homotopic and thus, $f$ and $g$ are homotopic as well.

Conversely, suppose that $f$ and $g$ are homotopic. This implies that the dgla maps $\L(f)$ and $\L(g): \L(B)\to\L(A)$ are homotopic and thus (since $\L(B)$ is a cofibrant dgla) there exists a Sullivan dgla homotopy
$\L(B)\to\L(A)[z,dz]$
which restricts to $\L(f)$ and $\L(g)$ at $z=0,1$. Viewing the last homotopy as a $\ground[z,dz]$-map and applying the functor $\L_{\ground[z,dz]}$ we obtain a map of formal $\ground[z,dz]$-linear cdgas
$\C\L(A)[z,dz]\to\C\L(B)[z,dz]$
which is the same as a Sullivan homotopy
\[s:\C\L(A)\to\C\L(B)[z,dz]\] which specializes to $\C\L(f)$ and $\C\L(g)$ at $z=0$ and $z=1$ respectively.
Consider the following diagram
\[
\xymatrix
{
\C\L(A)\ar^s[r]\ar_{q(A)}[d]&\C\L(B)[z,dz]\ar^{q(B[z,dz])}[d]\\
A\ar_i@{-->}@/_/[u]&B[z,dz]
}
\]
Here the dotted map $i$ is a section of $q(A)$ (which exists because $A$ is a cofibrant formal cdga). Now the required homotopy $h:A\to B[z,dz]$
is defined by the formula $h=q(B[z,dz])\circ s\circ i$.
\end{proof}
\begin{cor}\label{hin}
Let $A$ be a cofibrant formal cdga and $B$ be an arbitrary formal cdga:
\begin{enumerate}
\item
The relation of Sullivan homotopy on the set of maps $A\to B$ is an equivalence relation.
\item
If $A^\prime$ is a cofibrant formal cdga weakly equivalent to $A$ and $B^\prime$ is weakly equivalent to $B$ then there is a bijective correspondence $[A,B]_*\cong[A^\prime,B^\prime]_*$.
\item
Two cofibrant formal cdgas are Sullivan homotopy equivalent if and only if they are weakly equivalent.
\end{enumerate}
\end{cor}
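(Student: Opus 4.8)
The plan is to deduce all three statements from Theorem \ref{path}, which, for $A$ cofibrant and $B$ arbitrary, identifies the Sullivan homotopy relation on $\Hom_{\falg}(A,B)$ with the relation ``$f$ and $g$ become equal after applying the localization functor $\gamma\colon\falg\to\operatorname{Ho}(\falg)$'', together with the elementary observation that \emph{every} object of $\falg$ is fibrant: the terminal object is the zero algebra, and every morphism $A\to 0$ is surjective, hence a fibration.

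For part (1), by Theorem \ref{path} the relation ``$f$ Sullivan homotopic to $g$'' on $\Hom_{\falg}(A,B)$ is exactly the preimage, under the function $\gamma\colon\Hom_{\falg}(A,B)\to\Hom_{\operatorname{Ho}(\falg)}(A,B)$, of the diagonal (i.e.\ of the relation of equality of morphisms in $\operatorname{Ho}(\falg)$). Equality of morphisms is an equivalence relation, and the preimage of an equivalence relation under any function is again an equivalence relation; this gives (1). (One could equally well run the standard model-category argument that left homotopy is an equivalence relation on maps out of a cofibrant object, but the above is quickest given Theorem \ref{path}.) For part (2), Theorem \ref{path} gives $[A,B]_*\cong\Hom_{\operatorname{Ho}(\falg)}(A,B)$ and, since $A'$ is cofibrant, $[A',B']_*\cong\Hom_{\operatorname{Ho}(\falg)}(A',B')$. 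A weak equivalence is sent by $\gamma$ to an isomorphism, so the chosen weak equivalences between $A$ and $A'$ and between $B$ and $B'$ induce, by pre- and post-composition, a bijection $\Hom_{\operatorname{Ho}(\falg)}(A,B)\cong\Hom_{\operatorname{Ho}(\falg)}(A',B')$, whence the asserted bijection $[A,B]_*\cong[A',B']_*$.

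For part (3), suppose first that $A$ and $A'$ are cofibrant and weakly equivalent. Then $\gamma(A)$ and $\gamma(A')$ are isomorphic in $\operatorname{Ho}(\falg)$; choose morphisms $f\colon A\to A'$ and $g\colon A'\to A$ in $\falg$ representing this isomorphism and its inverse. Then $\gamma(g\circ f)=\id$ and $\gamma(f\circ g)=\id$, so by Theorem \ref{path} applied to the cofibrant objects $A$ and $A'$ the composites $g\circ f$ and $f\circ g$ are Sullivan homotopic to $\id_A$ and $\id_{A'}$ respectively; thus $A$ and $A'$ are Sullivan homotopy equivalent. Conversely, if $f\colon A\to A'$ is a Sullivan homotopy equivalence with inverse $g$, then $g\circ f\sim\id_A$ and $f\circ g\sim\id_{A'}$ yield $\gamma(f)\gamma(g)=\id$ and $\gamma(g)\gamma(f)=\id$ by Theorem \ref{path}, so $\gamma(f)$ is an isomorphism in $\operatorname{Ho}(\falg)$. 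Since $A$ and $A'$ are cofibrant and (as noted) fibrant, the model-categorical Whitehead theorem shows that $f$ is a weak equivalence; equivalently, one invokes that the class of weak equivalences in a closed model category is saturated.

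The content here is entirely in Theorem \ref{path}: there is no genuine obstacle, only two points that must not be glossed over. First, in the forward direction of (3) and in (2) one may only invoke Theorem \ref{path} for a pair whose source is cofibrant, which is exactly why the hypothesis that $A'$ be cofibrant cannot be dropped. Second, the converse direction of (3) relies on the fact that a map between cofibrant objects of $\falg$ which becomes an isomorphism in $\operatorname{Ho}(\falg)$ is already a weak equivalence, and this uses that every object of $\falg$ is fibrant.
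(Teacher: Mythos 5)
Your argument is correct and follows essentially the same route as the paper: everything is deduced from Theorem \ref{path} (the identification of Sullivan homotopy classes with morphism sets in the homotopy category), combined with the observation that every object of $\falg$ is fibrant, so that maps in $\operatorname{Ho}(\falg)$ out of a cofibrant object are represented by genuine morphisms and weak equivalences can be inverted up to Sullivan homotopy. The paper merely sketches this (writing out only the forward direction of (3)), whereas you have spelled out the same steps, including the saturation/Whitehead argument for the converse of (3); no discrepancy in substance.
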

\begin{proof}
All these statements follow from Theorem \ref{path} in a straightforward fashion. For example, if $f:A\to B$ is a weak equivalence
between two cofibrant formal cdgas then $f$ is invertible in the homotopy category of $\falg$; the inverse map is then represented by a map $g:B\to A$ since $B$ is cofibrant and all formal cdgas are fibrant.
We have $f\circ g$ is homotopic to $\id_B$ and so $f\circ g$ is Sullivan homotopic to
$\id_B$; similarly $g\circ f$ is Sullivan homotopic to $\id_A$. Other claims are equally obvious.
\end{proof}
From now on we will refer to Sullivan homotopy as simply homotopy (assuming that the source is cofibrant as it will always be).
\subsection{Free homotopy} We will now discuss the notion of a free homotopy in the category $\falg$. Unfortunately, this notion does appear slightly ad hoc and from the abstract point of view it seems unjustified. The reason for introducing it is that it corresponds to the notion of a \emph{not necessarily basepoint-preserving} homotopy between connected spaces and this correspondence will be made precise later on in the paper. Recall that for a formal cdga $A$ we denoted by $\tilde{A}$ the algebra obtained from $A$ by adjoining a unit.
\begin{defi}
Let $f,g:A\to B$ be a map between two formal cdgas and $\tilde{f},\tilde{g}:\tilde{A}\to \tilde{B}$ are the corresponding maps between the unital algebras $\tilde{A}$ and $\tilde{B}$. We say that $f$ and $g$ are \emph{freely homotopic} if there exists a (continuous) map
$h:\tilde{A}\to \tilde{B}[z,dz]$ such that $h|_{z=0}=\tilde{f}$ and $h|_{z=1}=\tilde{g}$. The set of equivalence classes generated by the relation of free homotopy will be denoted by $[A,B]$.
\end{defi}
\begin{rem}
It is clear that for two formal cdgas $A$ and $B$, any map $\tilde{A}\to \tilde{B}$ is induced by a map $A\to B$. Furthermore, a homotopy between two maps  $f,g:A\to B$ gives rise to a free homotopy, but not vice-versa: there may be maps in $\falg$ which are freely homotopic but \emph{not} homotopic.
\end{rem}
The following proposition shows that the for two formal cdgas $A,B$, the set $[A,B]$ has a homotopy invariant meaning.
\begin{prop}\label{red}
Let $A, A^\prime, B$ and $B^\prime$ be cofibrant formal cdgas with $A$ weakly equivalent to $A^\prime$ and $B$ weakly equivalent to $B^\prime$. Then there is a bijective correspondence $[A,B]\to [A^\prime, B^\prime]$.
\end{prop}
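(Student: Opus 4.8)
The plan is to reduce the statement about free homotopy classes to the already-established homotopy-invariance of ordinary (basepoint-preserving) homotopy classes, namely Corollary \ref{hin}(2), by passing through the unitalization functor $A\mapsto\tilde A$. The key observation is that a free homotopy between maps $A\to B$ is by definition nothing but an ordinary Sullivan homotopy between the associated maps $\tilde A\to\tilde B$ of \emph{augmented unital} formal cdgas, and that the category of augmented unital formal cdgas is equivalent to $\falg$ via $A\mapsto\tilde A$, $B\mapsto B_+$. So $[A,B]$ is identified with the set of Sullivan-homotopy classes of augmented-unital maps $\tilde A\to\tilde B$; dropping the requirement that maps preserve the augmentation is precisely what distinguishes free homotopy from basepoint-preserving homotopy.

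First I would set up the correspondence carefully: a continuous unital map $\tilde A\to\tilde B$ is the same datum as a (not necessarily augmentation-preserving) continuous cdga map $A\to B$ in the non-unital sense — here one must check that any unital map $\tilde A\to\tilde B$ restricts to a map $A\to B$ of non-unital formal cdgas after projecting away the unit summand, which is routine. Thus $[A,B]$ is the set of Sullivan-homotopy classes of continuous unital maps $\tilde A\to\tilde B$. Next I would invoke Theorem \ref{path}: since $A$ is cofibrant, so is $\tilde A$ (adjoining a unit is an equivalence of closed model categories, or one checks directly that $\tilde A=\hat S\Sigma^{-1}V^*$ unitalized is cofibrant), and therefore Sullivan homotopy of maps $\tilde A\to\tilde B$ coincides with homotopy in the homotopy category of unital formal cdgas. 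The subtlety is that Theorem \ref{path} as stated concerns $\falg$, i.e. non-unital formal cdgas; but because $\ground[z,dz]$ is unital and the path-object argument goes through verbatim for unital augmented formal cdgas (the functors $\L,\C$ and the whole machinery of Theorem \ref{Hinich} transport across the equivalence), the same conclusion holds. Hence $[A,B]$ is naturally the set of maps $\tilde A\to\tilde B$ in the homotopy category of unital formal cdgas.

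With this identification in hand, the proposition follows formally: if $A$ is weakly equivalent to $A'$ in $\falg$ then $\tilde A$ is weakly equivalent to $\tilde A'$ among unital formal cdgas, and similarly for $B,B'$; both $\tilde A,\tilde A'$ remain cofibrant. Applying the unital analogue of Corollary \ref{hin}(2) — equivalently, transporting that corollary across the equivalence of categories — yields a bijection between homotopy classes of maps $\tilde A\to\tilde B$ and homotopy classes of maps $\tilde A'\to\tilde B'$ in the homotopy category, which is exactly the desired bijection $[A,B]\to[A',B']$.

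The main obstacle I anticipate is purely bookkeeping: one must verify that the closed-model-category formalism of Section 2 and the homotopy results Theorem \ref{path} and Corollary \ref{hin} really do transport cleanly along the equivalence between $\falg$ and unital augmented formal cdgas, and in particular that a weak equivalence $A\to A'$ of non-unital formal cdgas induces one of unital ones and preserves cofibrancy. None of this is deep — it is the content of the remark after the first theorem of Section 2 that the two categories are equivalent — but it is the one place where a careless argument could go wrong, since free homotopy is defined only on the unital level and one is tempted to argue directly with non-unital path objects, which do not exist as genuine path objects because $\ground[z,dz]$ is not formal (cf. the discussion preceding Theorem \ref{path}).
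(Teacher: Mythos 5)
There is a genuine gap at the heart of your reduction: the identification of $[A,B]$ with the set of maps $\tilde A\to\tilde B$ in a homotopy category of unital formal cdgas is not available, and as stated it is false. The category of augmented unital formal cdgas is indeed equivalent to $\falg$, and (as remarked in the paper) every continuous unital map $\tilde A\to\tilde B$ is induced by a map $A\to B$; so transporting Theorem \ref{path} and Corollary \ref{hin} across this equivalence only reproduces the known invariance of the \emph{based} classes $[A,B]_*$. What distinguishes free homotopy from based homotopy is not the maps but the homotopies: a free homotopy is a unital map $h\colon\tilde A\to\tilde B[z,dz]$ which need not carry $A$ into $B[z,dz]$ (the image of an element of $A$ may acquire a component in $\ground[z,dz]$ vanishing only at $z=0,1$), and such an $h$ is not a right homotopy in the model category equivalent to $\falg$. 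Hence ``Sullivan homotopy of unital maps'' does not coincide with homotopy in that homotopy category: in general $[A,B]$ is a proper quotient of $[A,B]_*$ (freely homotopic maps need not be homotopic; topologically this is the quotient of $[A,B]_*$ by a $\pi_1$-action). No model structure is set up in the paper whose homotopy-category hom-sets are the free classes --- doing so would require leaving $\falg$ and allowing cdgas with several maximal ideals, exactly as the remark following Proposition \ref{red} indicates --- so the assertion that the path-object argument ``goes through verbatim'' on the unital level cannot be invoked; it is precisely the point at issue, and if it were true it would force $[A,B]=[A,B]_*$, contradicting the preceding remark.

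The proposition has a much more elementary proof, which is the paper's: since $A$ and $A'$ are cofibrant and weakly equivalent, Corollary \ref{hin}, (3) provides maps $A\to A'$ and $A'\to A$ that are mutually inverse up to Sullivan homotopy; a Sullivan homotopy unitalizes to a free homotopy, and pre- and post-composition with a fixed map preserves the free homotopy relation, so these maps induce mutually inverse bijections $[A,B]\leftrightarrows[A',B]$. The same argument on the target side, using that $B$ and $B'$ are cofibrant and weakly equivalent, gives $[A',B]\leftrightarrows[A',B']$, and composing the two bijections finishes the proof. Your instinct to work on the unital level is where free homotopy lives, but the homotopy-invariance input should be Corollary \ref{hin}, (3) applied separately in source and target, not a transported version of Corollary \ref{hin}, (2).
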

\begin{proof}
Since $A$ and $A^\prime$ are cofibrant and weakly equivalent they must be Sullivan homotopy equivalent, so that there are maps $A\to A^\prime$ and $A^\prime\to A$ which are homotopy inverse. This determines two mutually inverse maps
$[A,B]\leftrightarrows [A^\prime, B]$. Similarly there is a bijective correspondence
$[A^\prime, B]\leftrightarrows [A^\prime, B^\prime]$.
\end{proof}
\begin{rem}
It is possible that the statement of the above proposition holds without the condition that $A$ and $A^\prime$ be cofibrant (compare Corollary \ref{hin}, (2)). To address this question properly one would have to go beyond the category $\falg$, allowing cdgas with more than one maximal ideal (equivalently considering not necessarily cocomplete dg coalgebras). The version stated here is sufficient for most applications, however.
\end{rem}
\section{The Quillen-Neisendorfer closed model category structure}
There is another, apparently, more natural, notion of a weak equivalence on the category of formal cdgas due to Quillen \cite{Q'} and Neisendorfer \cite{N} (in fact, Quillen and Neisendorfer worked with coalgebras but we saw that this difference is immaterial). Namely, we say that two formal cdgas $A$ and $B$ are \emph{homologically equivalent} if there is a (continuous) morphism $A\to B$ inducing an isomorphism on cohomology. In other words, our new weak equivalences are now (continuous) quasi-isomorphisms.

It is clear that weakly equivalent formal cdgas are homologically equivalent. Indeed, if $f:A\to B$ is a weak equivalence, then (by definition) the map of dglas $\L(f):\L(B)\to\L(A)$ is a quasi-isomorphism and then the map of formal cgdas $\C\L(f):\C\L(A)\to\C\L(B)$ will be a quasi-isomorphism from which it follows that $f$ was a quasi-isomorphism to begin with. On the other hand, a homology equivalence in $\falg$ need not be a weak equivalence; e.g. for any semisimple Lie algebra $\g$ the (formal) cdga $\C(\g)$ is quasi-isomorphic to its cohomology ring (which is the symmetric algebra on a collection of odd generators) but this quasi-isomorphism is not a weak equivalence because otherwise $\g$ would be quasi-isomorphic, and hence, isomorphic, to an abelian Lie algebra.

It makes sense, therefore, to consider the Bousfield localization of Hinich's closed model category $\falg$ with respect to homology equivalences. We will not treat this problem in full generality but note that the solution for the subcategory $\falg_+$ consisting of \emph{connected} (i.e. positively-graded) formal cdgas was given in \cite{N}, generalizing the previous work of Quillen, \cite{Q'}. Note that the functor $\L:\falg\mapsto\L ie$ restricts to the functor (which we denote by the same symbol) $\L:\falg_+\mapsto\L ie_+$ where $\L ie_+$ stands for the category of non-negatively graded dglas. Similarly the functor $\C:\L ie\mapsto\falg$ restricts to $\C:\L ie_+\mapsto \falg_+$.
\begin{theorem}\
\begin{enumerate}
\item
The category $\falg_+$ admits a closed model category structure such that fibrations are surjective maps and weak equivalences are homology equivalences of connected formal cdgas.
\item
The adjoint functors $\C$ and $\L$ induce an equivalence between the homotopy category of $\falg_+$ and the full subcategory of the homotopy category of $\L ie_+$ consisting of dglas whose homology are pro-nilpotent graded Lie algebras.
\end{enumerate}
\end{theorem}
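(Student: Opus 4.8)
This theorem is, at bottom, a reformulation in the formal-cdga language of the results of Quillen \cite{Q'} (the simply connected case) and Neisendorfer \cite{N} (the nilpotent case), and the plan is to deduce it from \emph{loc.\ cit.}\ through the anti-equivalence $A\mapsto A^{*}$ between $\falg$ and the category of cocomplete dg coalgebras already used in Section~2; most of the work is the translation, while the two genuinely substantial inputs --- the existence of the model structure and the identification of the resulting homotopy category --- are Neisendorfer's.

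\emph{Part (1).} One checks first that $A\mapsto A^{*}$ restricts to an anti-equivalence between $\falg_{+}$ and the category $\C oalg_{+}$ of connected cocommutative dg coalgebras, and that, $\ground$-linear duality being exact, it sends surjections to injections and continuous quasi-isomorphisms to quasi-isomorphisms. Neisendorfer \cite{N} puts a closed model structure on $\C oalg_{+}$ with the quasi-isomorphisms as weak equivalences; transporting it along the anti-equivalence gives the structure on $\falg_{+}$ claimed in (1), with the surjections as fibrations. The axioms that are not purely formal are the two factorisations, and these one obtains from the step-by-step Sullivan-type construction of resolutions of connected cdgas carried out in \cite{N}; from that construction one also reads off that the cofibrant objects of $\falg_{+}$ are, up to retract, the nilpotent Sullivan algebras.

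\emph{Part (2).} I would proceed in three stages. First, Theorem~\ref{Hinich} supplies the (contravariant) adjunction between $\C$ and $\L$ and tells us that the adjunction maps $q(A)\colon\C\L(A)\to A$ and $p(\g)\colon\L\C(\g)\to\g$ are, respectively, a Hinich weak equivalence of formal cdgas --- hence in particular a quasi-isomorphism --- and a quasi-isomorphism of dglas. Second, one verifies that $\C$ maps $\L ie_{+}$ into $\falg_{+}$ and takes quasi-isomorphisms of non-negatively graded dglas to quasi-isomorphisms of formal cdgas: Chevalley--Eilenberg cohomology is a homotopy invariant of connected dglas, via the spectral sequence of the filtration of $\C(\g)$ by the powers of its augmentation ideal, a filtration that is finite in each cohomological degree because $\C(\g)$ is positively graded. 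Hence $\C$ descends directly to homotopy categories. The functor $\L$, by contrast, is \emph{not} homotopical for the Neisendorfer weak equivalences --- the quasi-isomorphism relating $\C(\g)$ to its cohomology for a semisimple $\g$, as discussed in the text, is precisely one that $\L$ fails to carry to a quasi-isomorphism --- and so it must be left derived, $\mathbb L\L(A):=\L(QA)$ for a nilpotent Sullivan replacement $QA\xrightarrow{\ \sim\ }A$. From $q(QA)$ one gets that $\C\circ\mathbb L\L$ is naturally isomorphic to the identity functor of the homotopy category of $\falg_{+}$; hence $\mathbb L\L$ and $\C$ identify this homotopy category with a full subcategory $\mathcal N$ of the homotopy category of $\L ie_{+}$, namely the one spanned by the objects in the essential image of $\mathbb L\L$.

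Third --- and here is where I expect the real difficulty --- one identifies $\mathcal N$. A dgla $\g\in\L ie_{+}$ belongs to $\mathcal N$ exactly when the natural map $\g\to\mathbb L\L(\C(\g))$ (built from $p(\g)$ together with the cofibrant replacement of $\C(\g)$) is an isomorphism in the homotopy category of $\L ie_{+}$, and the endofunctor $\g\mapsto\mathbb L\L(\C(\g))$ is a homotopy-theoretic pro-nilpotent completion. Indeed, for any $\g$ the connected cdga $\C(\g)$ has a nilpotent minimal model whose homotopy Lie algebra is pro-nilpotent and computes the homology of $\mathbb L\L(\C(\g))$; conversely, a dgla whose homology is already pro-nilpotent is, up to quasi-isomorphism, unchanged by this completion. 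Therefore $\mathcal N$ is precisely the full subcategory of the homotopy category of $\L ie_{+}$ spanned by the dglas with pro-nilpotent homology, which is the assertion of (2). In short: the substance --- the model structure of part (1) and the pro-nilpotent identification of the last stage --- is due to Neisendorfer; the present section contributes the passage to formal cdgas and the observation that the equivalence is realised by $\C$ and $\L$.
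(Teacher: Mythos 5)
Your proposal is correct and follows essentially the same route as the paper, whose entire proof is the observation that the statement is a reformulation of Propositions 5.2 and 7.2 of Neisendorfer \cite{N}, transported through the anti-equivalence between $\falg_+$ and connected cocommutative dg coalgebras. The extra detail you supply (duality exchanging surjections and injections, homotopy invariance of $\C$ on $\L ie_+$, deriving $\L$ and identifying the essential image via pro-nilpotent completion) is consistent with, and merely expands on, that citation.
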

\begin{proof}
The statement of the theorem is just a reformulation of Proposition 5.2 and Proposition 7.2 of \cite{N}.

\end{proof}
Since a weak equivalence of formal cdgas is stronger than a homology equivalence there are fewer cofibrant objects in the Quillen-Neisendorfer structure on $\falg_+$ compared to Hinich's one. For example, the formal cdga $\C(\g)$ will not be Quillen-Neisendorfer cofibrant unless $\g$ has pro-nilpotent homology Lie algebra; more generally a formal cdga $\hat{S}\Sigma^{-1} V^*$ representing an $L_\infty$ algebra $V$ will not be cofibrant unless a suitable nilpotency condition is satisfied. We restrict ourselves by describing a special kind of Quillen-Neisendorfer cofibrant objects coming from dglas with nilpotent homology; a more general result is contained in \cite{N}. Before formulating it recall the notion of a Sullivan minimal cdga.
\begin{defi}
A Sullivan minimal cdga is a cdga of the form $(S(W),d)$ where $W$ is a positively graded dg vector space which is a union of its subspaces $W=\bigcup W_i, i=1,2,\ldots$ with $d(W_i)\subset S(W_{i-1})$ and $d$ is decomposable: $d(W)\subset S(W)_+\cdot S(W)_+$.
\end{defi}
\begin{rem}\label{uncompleted}
Note that a Sullivan minimal cdga can always be completed and thus considered as a formal cdga (which represents a certain $L_\infty$ algebra).
We will be mostly interested in the case when $W$ is of finite type; in this case we will refer to $(S(W),d)$ as a Sullivan cdga of finite type. Note that in that case there is no difference between completed and uncompleted symmetric algebra. This is the most pleasant case: a Sullivan minimal cdga of finite type represents a cofibrant object both in the category $\falg$ and $\falg_+$.
\end{rem}
\begin{prop}
Let $\g$ be a non-negatively graded dgla with $H(\g)$ nilpotent and of finite type. Then the formal cdga $\C(\g)=\hat{S}\Sigma^{-1} \g^*$ is Quillen-Neisendorfer cofibrant. Moreover, there is an isomorphism in $\falg_+$:
\[
\C(\g)\cong A\otimes B
\]
where $A$ is a Sullivan minimal cdga and $B=\hat{S}\Sigma^{-1} V^*$ represents a linear contractible $L_\infty$ algebra $V$.
\end{prop}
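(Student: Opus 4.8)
The plan is to obtain the isomorphism from the decomposition theorem and then recognise the minimal factor as a Sullivan minimal cdga, after which cofibrancy follows formally. First I would view $\g$ as an $L_\infty$ algebra and apply Theorem~\ref{decomp}: there is an $L_\infty$ isomorphism $\g\cong\g_{\mathrm{min}}\times V$ in which $V$ is linear contractible and $\g_{\mathrm{min}}$ is minimal, so its underlying graded space is $H(\g)$ and its internal differential vanishes. Since the product of $L_\infty$ algebras corresponds to the completed tensor product of the representing formal cdgas, this dualises to an isomorphism in $\falg$
\[
\C(\g)=\hat S\Sigma^{-1}\g^*\;\cong\;\hat S\Sigma^{-1}H(\g)^*\otimes\hat S\Sigma^{-1}V^*.
\]
Put $A:=\hat S\Sigma^{-1}H(\g)^*$, the representing formal cdga of $\g_{\mathrm{min}}$, and $B:=\hat S\Sigma^{-1}V^*$; then $B$ represents the linear contractible $L_\infty$ algebra $V$ by construction, and since $\C(\g)$ and $A$ lie in $\falg_+$ so does $B$. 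It then remains to show that $A$ is a Sullivan minimal cdga of finite type and, granted this, that $\C(\g)\cong A\otimes B$ is Quillen--Neisendorfer cofibrant.

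Writing $A=\hat SW$ with $W=\Sigma^{-1}H(\g)^*$, I would first observe that because $\g$ is non-negatively graded a generator of $A$ in degree $i$ is dual to $H_{i-1}(\g)$, so $W$ is concentrated in degrees $\ge 1$; and because $H(\g)$ is of finite type so is $W$, whence the completed symmetric algebra coincides with the ordinary one in each degree and $A$ is of finite type. Minimality of $\g_{\mathrm{min}}$ says that its $L_\infty$ operations have no linear term, so $d_A$ is decomposable. Hence the one thing to verify is that $W$ admits an exhausting filtration $W=\bigcup_i W_i$ with $d_A(W_i)\subset S(W_{i-1})$.

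This is where the hypothesis enters, and I expect it to be the main obstacle. Decomposing $d_A=\sum_{i\ge2}\delta_i$ with $\delta_i\colon W\to\hat S^iW$ dual to the $i$-ary operation $m_i$ of $\g_{\mathrm{min}}$, a degree count shows that for $i\ge 3$ the map $\delta_i$ sends a degree-$p$ generator to a sum of products of generators of degree strictly less than $p$, and that the same holds for every component of $\delta_2$ except the one dual to $m_2\bigl(H_0(\g),-\bigr)$, the component recording the bracket action of the degree-zero Lie algebra $H_0(\g)$ on $H(\g)$ (on a degree-$1$ generator it is dual to the bracket on $H_0(\g)$ itself). So the only part of $d_A$ which does not strictly lower the generating degree is this bracket-action part, and nilpotency of $H(\g)$ is exactly what controls it: it forces $H_0(\g)$ to be nilpotent and its bracket action on each $H_n(\g)$ to be nilpotent of one fixed order $N$, so each $H_n(\g)$ carries a finite $H_0(\g)$-stable filtration on which bracketing with $H_0(\g)$ drops the filtration degree by one. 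Dualising these filtrations and interleaving them with the filtration of $W$ by generating degree (which has finite-dimensional steps, $W$ being of finite type), so that the degree-$p$ generators with their $N$-step internal filtration enter only after all generators of degree $<p$ have appeared, produces the required filtration $W=\bigcup_iW_i$; the verification that $d_A(W_i)\subset S(W_{i-1})$ is then a routine check. The delicate point is to build this one filtration simultaneously tracking the generating degree and the nilpotency order, and I would expect the argument to be a special case of the analysis in~\cite{N}. Once it is carried out, $A$ is a Sullivan minimal cdga of finite type.

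It remains to deduce cofibrancy of $\C(\g)\cong A\otimes B$. By Remark~\ref{uncompleted} a Sullivan minimal cdga of finite type is cofibrant in $\falg_+$, so $A$ is cofibrant; and $B$, being a (completed, possibly transfinite) free adjunction of contractible pairs $x_\alpha$, $dx_\alpha=y_\alpha$ with $|x_\alpha|\ge1$ --- each of which is an acyclic cofibration in $\falg_+$ --- is cofibrant as well. Passing to the equivalent category of augmented unital formal cdgas, the completed tensor product is the coproduct, so $\C(\g)\cong A\otimes B$ is the pushout of the cofibration $\ground\to A$ along the cofibration $\ground\to B$, and is therefore cofibrant in $\falg_+$.
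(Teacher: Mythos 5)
Your proposal is correct and follows essentially the same route as the paper: apply the decomposition theorem to get $\C(\g)\cong\hat S\Sigma^{-1}H(\g)^*\otimes B$, use finite type to identify completed and uncompleted symmetric algebras, and use nilpotency of $H(\g)$ to build exactly the filtration the paper uses, namely the degree (Postnikov) filtration refined by the bracket-with-$H_0$ filtration $H(m,j)=[H_0,H(m,j-1)]$, citing Neisendorfer's Proposition 3.12 for the details. The only difference is that you spell out the final cofibrancy deduction (coproduct of the cofibrant factors $A$ and $B$), which the paper leaves implicit.
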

\begin{proof}
This is essentially Proposition 3.12 of \cite{N} and we outline how its proof is adopted to the present context. Consider $\C(\g)$ as a formal cdga representing an $L_\infty$ algebra; then by the decomposition theorem there is an isomorphism of formal cdgas $\C(\g)\cong \hat{S}\Sigma^{-1} [H(\g)]^*\otimes B$ where $B$ represents a linear contractible $L_\infty$ algebra and $\hat{S}\Sigma^{-1} [H(\g)]^*$ represents the $L_\infty$ minimal model of the dgla  $\g$. Note that since $H=H(\g)$ is a graded Lie algebra of finite type the completed symmetric algebra on $\Sigma^{-1} [H(\g)]^*$ is the same as the uncompleted one.

Set $H(m,0):=\bigoplus_{i=0}^m H_i$, the $m$'th `Postnikov stage' of $H$. Consider further $H(m,1):=[H_0,H(m,0)]$, $H(m,2)=[H_0,H(m,1)]$ etc. so that $H(m,0)\supset H(m_1)\supset\ldots\supset H(m,n)$ is a finite filtration on $H(m,0)$ for every $m$. Then we have the refined filtration on $H$ (which corresponds to the principal refinement of the Postnikov tower of the rational space corresponding to $H$):
\[
H(0,n_0)\subset\ldots \subset H(0,0)=H_0\subset H(1, n_1)\subset\ldots \subset H(1, 0)=H_0\oplus H_1\subset\ldots
\]
and the induced filtration on $S\Sigma^{-1} H^*$ will satisfy the second condition of the Sullivan minimal model by the nilpotency of $H$.
\end{proof}
\begin{rem}
In the proposition above it was essential that $\g$ (and so $H=H(\g)$) was non-negatively graded. Without this assumption the nilpotency of $H$ alone does not even guarantee that the formal cdga representing an $L_\infty$ minimal model of $\g$ would be a polynomial algebra, let alone that it would be a minimal Sullivan cdga.
\end{rem}
\section{The MC moduli set and its homotopy invariance}
We will now describe the notion of an MC moduli set in the context of $L_\infty$ algebras and formal cdgas. 
\begin{defi} Let $(V,m)$ be an $L_\infty$ algebra and $A$ be a formal cdga. The formal cdga $A\otimes \hat{S}\Sigma^{-1} V^*$ represents an $A$-linear $L_\infty$ algebra by $A$-linear extension of scalars; the corresponding $L_\infty$ structure will be denoted by $m^A$. Then an element $\xi\in (A\otimes\Sigma V)_0$ is
\emph{Maurer-Cartan} (MC for short) if $(d_{A\otimes\Sigma V})(\xi)+\sum_{i=2}^\infty
\frac{1}{i!}m^A_i(\xi^{\otimes i})=0$. The set of Maurer-Cartan elements in $A\otimes\Sigma V$ will be
denoted by $\MC(V,A)$.

\end{defi}
 The correspondence $(V,A)\mapsto \MC(V,A)$ is clearly
functorial in $A$ and $V$. It is straightforward to check and well-known (see, e.g. \cite{CL}) that as a functor of the second argument $\MC(V,A)$ is representable by the formal cdga $\hat{S}\Sigma^{-1} V^*$. In other words, there is a natural bijective correspondence
\[
\MC(V,A)\cong \Hom_{\falg}(\hat{S}\Sigma^{-1} V^*, A).
\]
Furthermore, if $V$ is a dgla then we also have the following natural bijection, \cite{Q'}:
\[
\MC(V,A)\cong \Hom_{\L ie}(\L(A), V).
\]
If $\xi\in\MC(V,A)$ then we will abuse the notation and denote the corresponding map of formal cdgas $\hat{S}\Sigma^{-1} V^*\to A$ by the same symbol $\xi$.
\begin{defi}
Two MC elements $\xi,\eta\in\MC(V,A)$ are called \emph{homotopic} if there exists
an MC-element $h\in(V, A[z,dz])$ such that $h|_{z=0}=\xi$ and $h|_{z=1}=\eta$.
\end{defi}
\begin{rem}
Note that there is a slight imprecision built into this definition: an MC element as we defined it has coefficients in a formal cdga, but
$A[z,dz]$ is not formal. Nevertheless, the definition of an MC element does make sense for such a coefficient cdga and this will not cause us any further problems. This issue has already come up in the definition of a Sullivan homotopy for maps between formal cdgas. It is likely that
this minor nuisance can be fixed either by extending the category of formal cdgas suitably or by modifying the notion of the Sullivan homotopy, however we will refrain from elaborating on this further.
\end{rem}
\begin{prop}
The relation of homotopy on $\MC(V,A)$ is an equivalence relation. The set of equivalence classes under this relation is called
\emph{the MC moduli set} of $V$ with coefficients in $A$ and it will be denoted by $\MCmod(V,A)$.
\end{prop}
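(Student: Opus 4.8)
The plan is to reduce the statement to the corresponding fact about Sullivan homotopy of maps between formal cdgas, which has already been established when the source is cofibrant.

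First I would use the identification $\MC(V,A)\cong\Hom_{\falg}(\hat{S}\Sigma^{-1}V^*,A)$ recorded above, under which an MC element $\xi$ corresponds to a morphism of formal cdgas, still denoted $\xi$. The same identification makes sense verbatim with the coefficient cdga $A$ replaced by $A[z,dz]$ (this is the mild imprecision flagged in the preceding remark: $A[z,dz]$ is not formal, but a morphism out of $\hat{S}\Sigma^{-1}V^*$ into it is the same datum as an MC element with such coefficients, and the two endpoint specializations $|_{z=0},|_{z=1}$ are morphisms of cdgas). Consequently a homotopy $h\in\MC(V,A[z,dz])$ between MC elements $\xi$ and $\eta$ is exactly the same thing as a cdga morphism $h\colon\hat{S}\Sigma^{-1}V^*\to A[z,dz]$ with $|_{z=0}\circ h=\xi$ and $|_{z=1}\circ h=\eta$, i.e. a Sullivan homotopy between $\xi$ and $\eta$ regarded as maps of formal cdgas. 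In other words, the relation of homotopy on $\MC(V,A)$ transports, along the above bijection, precisely to the relation of Sullivan homotopy on $\Hom_{\falg}(\hat{S}\Sigma^{-1}V^*,A)$.

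Next I would invoke cofibrancy. By Proposition \ref{char}(2) the representing formal cdga $\hat{S}\Sigma^{-1}V^*$ is cofibrant in $\falg$. Hence Corollary \ref{hin}(1) (which in turn rests on Theorem \ref{path}) applies and tells us that Sullivan homotopy is an equivalence relation on the set of maps $\hat{S}\Sigma^{-1}V^*\to A$. Transporting this back across the bijection $\MC(V,A)\cong\Hom_{\falg}(\hat{S}\Sigma^{-1}V^*,A)$ shows that homotopy of MC elements is reflexive, symmetric and transitive, so the quotient is well defined and is by definition $\MCmod(V,A)$.

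I expect the only genuinely delicate point to be the bookkeeping in the first step: verifying that the representability bijection and the endpoint specializations behave correctly with the non-formal coefficient cdga $A[z,dz]$, so that a homotopy of MC elements really is the same as a Sullivan homotopy of the associated cdga maps. Reflexivity (via the constant homotopy) and symmetry (via the substitution $z\mapsto 1-z$) are immediate and require no cofibrancy; it is transitivity — the gluing of two homotopies — that needs the abstract input of Theorem \ref{path}, which is precisely why identifying MC homotopy with Sullivan homotopy is the efficient route. Alternatively one could argue transitivity directly by constructing an MC element over $A[z_1,dz_1,z_2,dz_2]$ restricting to $\xi,\eta,\zeta$ appropriately and then specializing along a suitable cdga map back to $A[z,dz]$, but this merely re-proves a special case of the path-object machinery and is less clean.
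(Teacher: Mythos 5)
Your argument is exactly the paper's: identify a homotopy of MC elements with a Sullivan homotopy between the corresponding maps $\hat{S}\Sigma^{-1}V^*\to A$, note cofibrancy of the representing formal cdga (Proposition \ref{char}), and conclude from Corollary \ref{hin}(1). The extra bookkeeping you spell out about the non-formal coefficient algebra $A[z,dz]$ is precisely what the paper leaves implicit, so the proposal is correct and essentially identical in approach.
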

\begin{proof}
One only has to note that a homotopy between two MC elements $\xi,\eta\in \MC(V,A)$ is the same as a Sullivan homotopy between the maps
$\xi, \eta:\hat{S}\Sigma^{-1} V^*\to A$; the statement then follows from Corollary \ref{hin}, (1).
\end{proof}
The following is one of the main results about the MC moduli set. This result has a long history; a version of it is contained in the unpublished manuscript of Schlessinger and Stasheff \cite{SS}; it was further elaborated in \cite{GM}. Kontsevich formulated it using $L_\infty$ algebras in \cite{Kon} and Keller's method \cite{Kel} is essentially the same as ours. We believe that our formulation is the most general among those currently in existence; its obvious $\Z/2$-graded analogue is also valid.
\begin{theorem}\label{formdef}
Let $V$ be an $L_\infty$ algebra and $A$ be a formal cdga. Then for any $L_\infty$ algebra $U$ which is $L_\infty$ quasi-isomorphic to $V$ and any formal cdga $B$ which is weakly equivalent to $A$ there is a natural bijective correspondence
\[
\MCmod(V,A)\cong\MCmod(U,B).
\]
\begin{proof}
Let $\hat{S}\Sigma^{-1} V^*$ and $\hat{S}\Sigma^{-1} U^*$ be the formal cdgas which represent the $L_\infty$ algebras $V$ and $U$ respectively. Then we have $\MCmod(V,A)\cong [\hat{S}\Sigma^{-1} V^*, A]_*$ and $\MCmod(U,A)\cong [\hat{S}\Sigma^{-1} U^*, B]_*$ so the statement follows from Corollary \ref{hin}, (2).
\end{proof}
\end{theorem}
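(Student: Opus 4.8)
The plan is to reduce the statement entirely to facts about the homotopy category of $\falg$ already proved above, principally the representability of the MC functor together with Theorem \ref{path} and Corollary \ref{hin}.

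First I would unwind the definition of $\MCmod$. For any $L_\infty$ algebra $W$ and any formal cdga $C$, representability of $\MC(-,-)$ in its second argument gives a natural bijection $\MC(W,C)\cong\Hom_{\falg}(\hat{S}\Sigma^{-1} W^*,C)$, so that an MC element is literally a morphism of formal cdgas out of the representing cdga $\hat{S}\Sigma^{-1} W^*$. Tracing through the definitions, an MC homotopy between two elements $\xi,\eta$ --- an MC element over $C[z,dz]$ specialising to $\xi$ and $\eta$ at $z=0,1$ --- is precisely a Sullivan homotopy between the two corresponding maps $\hat{S}\Sigma^{-1} W^*\to C$; here one must note, as in the remark preceding the statement, that $C[z,dz]$ is not itself a formal cdga, but the definitions nonetheless match up and this causes no difficulty. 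Hence $\MCmod(W,C)$ \emph{is}, by construction, the set of Sullivan homotopy classes of maps $\hat{S}\Sigma^{-1} W^*\to C$.

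Next I would bring in cofibrancy. By Proposition \ref{char} the representing cdga $\hat{S}\Sigma^{-1} W^*$ is always cofibrant in $\falg$, so Theorem \ref{path} identifies the set of Sullivan homotopy classes of maps out of it with the hom-set $[\hat{S}\Sigma^{-1} W^*,C]_*$ in the homotopy category of $\falg$. Applying this to the two situations in the statement yields $\MCmod(V,A)\cong[\hat{S}\Sigma^{-1} V^*,A]_*$ and $\MCmod(U,B)\cong[\hat{S}\Sigma^{-1} U^*,B]_*$. Now an $L_\infty$ quasi-isomorphism relating $U$ and $V$ is the same as a weak equivalence $\hat{S}\Sigma^{-1} V^*\to\hat{S}\Sigma^{-1} U^*$ in $\falg$ (Proposition \ref{char}(1)) between two cofibrant objects, and $A$ and $B$ are weakly equivalent by hypothesis. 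Corollary \ref{hin}(2) then supplies a bijection $[\hat{S}\Sigma^{-1} V^*,A]_*\cong[\hat{S}\Sigma^{-1} U^*,B]_*$, and composing it with the two bijections above gives the desired $\MCmod(V,A)\cong\MCmod(U,B)$. Naturality is inherited from the naturality of each bijection in the chain --- more precisely, the correspondence is the one induced by the chosen weak equivalences.

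Given how much of the work has been front-loaded into Theorem \ref{path} and Corollary \ref{hin}, I do not expect a genuine obstacle here; the one point deserving care is the bookkeeping around the non-formal coefficient cdga $C[z,dz]$ when matching ``MC homotopy over $A[z,dz]$'' with ``Sullivan homotopy of the classifying maps valued in $A[z,dz]$'', but this is exactly the identification used tacitly elsewhere and is routine.
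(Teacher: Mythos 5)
Your proposal is correct and follows essentially the same route as the paper: identify $\MCmod(V,A)$ and $\MCmod(U,B)$ with the Sullivan homotopy class sets $[\hat{S}\Sigma^{-1} V^*,A]_*$ and $[\hat{S}\Sigma^{-1} U^*,B]_*$ via representability and cofibrancy of the representing cdgas, then invoke Corollary \ref{hin}(2). You simply spell out more explicitly the identifications (MC homotopy versus Sullivan homotopy, Proposition \ref{char}) that the paper leaves implicit.
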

\begin{rem}\label{SS} In applications it often happens that the $L_\infty$ algebra $V$ that figures in Theorem \ref{formdef} is in fact a pronilpotent dgla in which case
the two MC elements are equivalent if and only if they are \emph{gauge equivalent}, an important result due to Schlessinger-Stasheff \cite{SS}, cf. also \cite{CL'} for a discussion of the notion of a gauge equivalence and a short proof.
\end{rem}
Sometimes one wants to consider MC elements in $V$ with coefficients in a not necessarily formal cdga $A$ (such as the ground field $\ground$, for instance). In such a situation the immediate problem is that the series $(d_{A\otimes\Sigma V})(\xi)+\sum_{i=2}^\infty
\frac{1}{i!}m^A_i(\xi^{\otimes i})=0$ defining the MC element $\xi$ need not converge. One important case when this problem goes away is when $V$ is a strict dgla. In that case the MC equation becomes the familiar equation of a flat connection:
\[
d(\xi)+\frac{1}{2}[\xi,\xi]=0.
\]
One can still call the elements $\xi\in A\otimes \Sigma V$ satisfying the above equation the MC elements of $V$ with coefficients in $A$ and denote the corresponding set by $\MC(V,A)$. Furthermore, the notion of a homotopy (or a gauge equivalence) makes perfect sense and we are entitled to form the moduli set $\MCmod(V,A)$ just as above; in the case when $V\otimes A$ is not a nilpotent dgla the homotopy may not be transitive and we have to take its transitive closure.  However the issue of homotopy invariance of $\MCmod(V,A)$ is more subtle; e.g. it is no longer true that an $L_\infty$ quasi-isomorphism in $V$ induces a bijection on the MC moduli set, even when $V$ is nilpotent. Here is a simple (in some sense simplest) example. Let $V$ be the 2-dimensional dgla generated by one element $x$ in degree $-1$ with $d(x)=-\frac{1}{2}[x,x]$. Clearly $V$ is acyclic, that is to say it is quasi-isomorphic to the zero Lie algebra. However it is easy to see that $\MCmod(V,\ground)$ consists of two elements -- $0$ and $x$. This demonstrates that one cannot simply remove the condition that the coefficient cdga $A$ be formal.

In our applications we will not consider MC elements with coefficients in a non-formal cdga, however we will need a version of the MC moduli set which could be viewed as a small step away from the formal situation. As far as we are aware this notion has not been considered before.
\begin{defi}
Let $(V,m)$ be an $L_\infty$ algebra with representing formal cdga $\hat{S}\Sigma^{-1} V^*$ and $A$
be a formal cdga. Then two $MC$ elements $\xi,\eta\in\MC(V,A)$ are called \emph{freely homotopic} if they are freely homotopic as maps of
formal cdgas $\hat{S}\Sigma^{-1} V^*\to A$.
Denote the transitive closure of the relation
of being freely homotopic by $\sim$; then the
\emph{reduced} MC moduli space of $V$ with coefficients in $A$ is defined as $\widetilde{\MCmod}(V,A):=\MC(V,A)/\sim$.
\end{defi}
The following result is a reformulation of Proposition \ref{red}.
\begin{prop}
Let $V$ be an $L_\infty$ algebra and $A$ be  cofibrant formal cdga. Then the set $\widetilde{\MCmod}(V,A)$ is homotopy invariant in the sense that
for any $L_\infty$ algebra $V^\prime$ quasi-isomorphic to $V$ and a cofibrant formal cdga $A^\prime$ quasi-isomorphic to $A$ there is a bijective correspondence
\[
\widetilde{\MCmod}(V,A)\cong \widetilde{\MCmod}(V^\prime,A^\prime).
\]
\end{prop}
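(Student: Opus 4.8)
The plan is to recognise $\widetilde{\MCmod}(V,A)$ as the set of free homotopy classes of maps of formal cdgas and then quote Proposition \ref{red}. First I would invoke the representability of the Maurer--Cartan functor: there is a natural bijection $\MC(V,A)\cong\Hom_{\falg}(\hat{S}\Sigma^{-1}V^*,A)$, and, by the very way $\widetilde{\MCmod}$ was defined, two MC elements $\xi,\eta$ are freely homotopic exactly when the corresponding maps $\hat{S}\Sigma^{-1}V^*\to A$ are freely homotopic in the sense of the definition preceding Proposition \ref{red}. Passing to transitive closures on both sides therefore gives a natural bijection $\widetilde{\MCmod}(V,A)\cong[\hat{S}\Sigma^{-1}V^*,A]$, and likewise $\widetilde{\MCmod}(V',A')\cong[\hat{S}\Sigma^{-1}(V')^*,A']$.

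Next I would verify that the four formal cdgas involved satisfy the hypotheses of Proposition \ref{red}. The cdgas $\hat{S}\Sigma^{-1}V^*$ and $\hat{S}\Sigma^{-1}(V')^*$ are cofibrant by Proposition \ref{char}(2), and since $V$ and $V'$ are $L_\infty$ quasi-isomorphic their representing cdgas are weakly equivalent by Proposition \ref{char}(1); the cdgas $A$ and $A'$ are cofibrant and weakly equivalent by hypothesis. Applying Proposition \ref{red} with $\hat{S}\Sigma^{-1}V^*,\hat{S}\Sigma^{-1}(V')^*$ in the role of the source cdgas and $A,A'$ in the role of the target cdgas yields a bijection $[\hat{S}\Sigma^{-1}V^*,A]\cong[\hat{S}\Sigma^{-1}(V')^*,A']$; composing with the two identifications of the previous paragraph gives the asserted correspondence $\widetilde{\MCmod}(V,A)\cong\widetilde{\MCmod}(V',A')$. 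Naturality is transparent from the proof of Proposition \ref{red}, where the bijection is implemented by composition with a Sullivan homotopy equivalence on each side.

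I do not expect any genuine obstacle beyond bookkeeping: the substantive input is Proposition \ref{red} (which is itself a formal consequence of Corollary \ref{hin}), and the only point requiring a little care is the observation that the free-homotopy relation on $\MC(V,A)$ transports, under representability, to the free-homotopy relation on maps $\hat{S}\Sigma^{-1}V^*\to A$ — true essentially by definition of $\widetilde{\MCmod}$. One should also read ``quasi-isomorphic'' for the cofibrant formal cdgas $A,A'$ as ``weakly equivalent'' (equivalently, $L_\infty$ quasi-isomorphic for the $L_\infty$ algebras they represent), since, as the semisimple Lie algebra example in Section 4 illustrates, a bare homology equivalence between cofibrant formal cdgas need not be a weak equivalence, and it is weak equivalence that Proposition \ref{red} requires.
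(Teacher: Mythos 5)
Your proposal is correct and takes essentially the same route as the paper, which offers no separate argument at all: it simply states that the proposition is a reformulation of Proposition \ref{red}, and your write-up makes that reformulation explicit (representability of $\MC$, identification of the free-homotopy relations, and cofibrancy plus weak equivalence of the representing cdgas via Proposition \ref{char}). Your closing caveat that ``quasi-isomorphic'' for the coefficient cdgas must be read as weak equivalence is a reasonable clarification consistent with the paper's intent.
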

\noproof
\begin{rem}
We see that the relation on the set of MC elements of being freely homotopic is stronger then the usual homotopy relation. It follows that there is a surjective map ${\MCmod}(V,A)\to \widetilde{\MCmod}(V,A)$. A more precise relationship between these two moduli sets can sometimes be established; in fact it follows from our topological interpretation given in the last section that in the case when the representing formal cdga of $V$ is a Sullivan model of a rational space $X$ there is an action of the group $\pi_1(X)$ on $\MCmod(V,A)$ so that the quotient by this action is $\widetilde{\MCmod}(V,A)$. In the context of deformation theory we can offer the following analogy: given an object $O$ and its universal deformation with a (perhaps dg) base $X$ there is a `residual' action of the group $\Aut(O)$ of automorphisms of $O$ on $X$. In favorable cases one expects the quotient to be the germ of the moduli space at $O$.
\end{rem}
\subsection{Connected covers of dglas and MC moduli}
For the needs of rational homotopy theory we need to consider connected formal cdgas (such a cdga $A$ is required to satisfy $A^i=0$ for $i\leq 0$) and connected $L_\infty$ algebras (such an $L_\infty$ algebra $V$ is required to satisfy $V_i=0$ for $i<0$).
\begin{defi}
Let $(V,m_V)$ be an $L_\infty$ algebra; then its \emph{connected cover} is the $L_\infty$ algebra $V\langle 0\rangle$ defined by the formula
\[V\langle 0\rangle_i=\begin{cases}V_i, \text{~if~ } i>0\\ \Ker\{d:V_0\to V_{-1}\} \text{~if~ } i=0\\ 0, \text{~if~} i<0\end{cases}.\]
The $L_\infty$ structure $m_{V\langle 0\rangle}$ on $V\langle 0\rangle$ is the obvious restriction of $m_V$.
\end{defi}
It is clear that there is a natural (strict) $L_\infty$ map $V\langle 0\rangle\to V$, moreover for two $L_\infty$ quasi-isomorphic $L_\infty$ algebras $V$ and $U$ the corresponding connected covers $V\langle 0\rangle$ and $U\langle 0\rangle$ are $L_\infty$ quasi-isomorphic.
\begin{prop}\label{conn}
For any connected formal cdga $A$ and an $L_\infty$ algebra $V$ there is a natural bijection
\[
\MCmod(V,A)\cong\MCmod(V\langle 0\rangle,A).
\]
\end{prop}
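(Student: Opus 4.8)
The plan is to obtain the bijection directly from the natural strict $L_\infty$ morphism $\iota\colon V\langle 0\rangle\to V$, bypassing Theorem \ref{formdef} (which does not apply, since $\iota$ is in general \emph{not} an $L_\infty$ quasi-isomorphism). For any coefficient cdga $B$ for which Maurer--Cartan elements make sense, $\iota$ induces a natural map $\iota_*\colon\MC(V\langle 0\rangle,B)\to\MC(V,B)$, and the key claim is that $\iota_*$ is a \emph{bijection} whenever $B$ is concentrated in strictly positive cohomological degrees. Granting this, the proposition follows quickly: $A$ is connected, hence so is $A[z,dz]=A\otimes\ground[z,dz]$ (as $\ground[z,dz]$ sits in cohomological degrees $0$ and $1$), so $\iota_*$ is a bijection on MC sets with coefficients in both $A$ and $A[z,dz]$, and these bijections commute with the two specialisation maps $|_{z=0},|_{z=1}$; therefore they descend to the quotients by the homotopy relation, producing the asserted bijection $\MCmod(V\langle 0\rangle,A)\cong\MCmod(V,A)$.

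To prove the claim I would analyse the MC equation degree by degree. Writing the grading homologically, connectivity of $B$ forces $(B\otimes\Sigma V)_0=\bigoplus_{p\ge 1}B^p\otimes V_{p-1}$, so an MC element $\xi$ involves only the non-negatively graded part of $V$, and $\xi=\xi_1+\xi_2+\cdots$ with $\xi_p\in B^p\otimes V_{p-1}$. Now $(B\otimes\Sigma V)_{-1}$ has $B^1\otimes V_{-1}$ as its bottom summand, and among the terms of the MC equation $(d_{B\otimes\Sigma V})(\xi)+\sum_{i\ge 2}\tfrac{1}{i!}m_i^B(\xi^{\otimes i})$ the only one landing there is (up to sign) $1\otimes d_V$ applied to $\xi_1$, because $d_B(\xi)$ and every term $m_i^B(\xi^{\otimes i})$ with $i\ge 2$ have $B$-degree at least $2$. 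Hence the MC equation forces $\xi_1\in B^1\otimes\Ker\{d_V\colon V_0\to V_{-1}\}=B^1\otimes V\langle 0\rangle_0$, so that $\xi$ already lies in the sub-$\ground$-space $(B\otimes\Sigma V\langle 0\rangle)_0\subseteq(B\otimes\Sigma V)_0$. The remaining components of the equation, those in $B^q\otimes V_{q-2}$ with $q\ge 2$, involve only brackets of non-negative degree evaluated on the non-negatively graded part of $V$; since $m_{V\langle 0\rangle}$ is the restriction of $m_V$ and $\iota$ is strict, these are literally the components of the MC equation for $\xi$ viewed as an element for $V\langle 0\rangle$ — one checks along the way that $m_2(\xi_1\otimes\xi_1)$ is again $d_V$-closed (because $d_V$ is a derivation of the bracket), so that these components genuinely take values in the subspaces $B^q\otimes V\langle 0\rangle_{q-2}$. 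Thus $\iota_*$ is surjective; it is injective because $\iota$ is a degreewise monomorphism, and the claim follows.

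I expect the degree bookkeeping to be the only real point of friction: one must split the MC equation for $V$ over connective coefficients into exactly two pieces — the single component $B^1\otimes V_{-1}$, which reproduces precisely the cycle condition defining $V\langle 0\rangle$ in degree $0$, and everything else, which must be recognised verbatim as the MC equation for $V\langle 0\rangle$ (including the mild subtlety that its degree-$0$ output lands in $\Ker d_V$, not merely in $V_0$). A secondary point worth making explicit is that this argument nowhere uses formality of the coefficient cdga, which is exactly why it applies unchanged to the non-formal path object $A[z,dz]$ and so transfers the conclusion from MC sets to MC moduli sets; naturality is then automatic, as $\iota_*$ is a natural transformation in both $A$ and $V$.
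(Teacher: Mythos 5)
Your argument is correct, and it takes a somewhat different route from the paper. The paper first reduces, without loss of generality, to the case where $V$ is a \emph{minimal} $L_\infty$ algebra (using the decomposition theorem together with the homotopy invariance of $\MCmod$ and the fact that connected covers of quasi-isomorphic $L_\infty$ algebras are quasi-isomorphic), and then argues via representability: MC elements are continuous cdga maps out of $\hat{S}\Sigma^{-1}V^*$, and since the generators dual to the negatively graded part of $V$ sit in non-positive cohomological degrees while $A$ is connected, every such map kills them and hence factors through the quotient cdga representing $V\langle 0\rangle$; the same degree count handles homotopies. You instead work with an arbitrary $V$ and analyse the MC equation componentwise over any positively graded coefficient cdga $B$, showing directly that $\iota_*\colon\MC(V\langle 0\rangle,B)\to\MC(V,B)$ is a bijection (the only nontrivial point being that the bottom component of the equation forces $\xi_1\in B^1\otimes\Ker d_V$, and that the remaining components are literally those of the MC equation for $V\langle 0\rangle$ — your checks that $V\langle 0\rangle$ is closed under the operations, via $d^2=0$ and $d$ being a derivation of $m_2$, are exactly what is needed there), and then you transfer to moduli sets by applying the same statement to $B=A[z,dz]$, which is again positively graded, and using compatibility with the two evaluation maps. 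Your approach buys independence from the minimal-model reduction (so no appeal to the decomposition theorem or to Theorem~\ref{formdef}) and yields the bijection already at the level of MC sets for arbitrary $V$, with naturality manifest; the paper's approach is shorter given the machinery already in place and makes the statement transparent as a Yoneda-style consequence of representability. Both proofs hinge on the same observation that $A[z,dz]$ remains connected, so the identification of homotopies is automatic.
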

\begin{proof}
Without loss of generality we can assume that $V$ is a minimal $L_\infty$ algebra. Choose a homogeneous basis $\{x_i\}, \{y_i\}$ in $V$ for which $|x_i|\geq 0$ and $|y_i|<0$. Passing to the dual we see that the $L_\infty$ algebra $V$ is represented by a formal cdga $\hat{S}\{\Sigma^{-1} x_i^*,\Sigma^{-1} y_i^*\}$ whereas the  $L_\infty$ algebra $V\langle 0\rangle$ is represented by its quotient formal cdga $\hat{S}\{\Sigma^{-1} x_i^*\}$. Since MC elements in $V$ and $V\langle 0\rangle$ with coefficients in $A$ are represented by maps from $\hat{S}\{\Sigma^{-1} x_i^*,\Sigma^{-1} y_i^*\}$ and $\hat{S}\{\Sigma^{-1} x_i^*\}$ into $A$ the degree considerations give a bijection $\MC(V,A)\cong\MC(V\langle 0\rangle,A)$. It is likewise clear that any homotopy of elements in $\MC({V},A)$ lifts to a homotopy in $\widetilde{MC}(V,A)$ from which the desired statement follows.
\end{proof}
\section{Maurer-Cartan twisting}
We now recall the procedure of \emph{twisting} in $L_\infty$ algebras by MC elements, following \cite{CL'}. Let $(V,m)$ be an $L_\infty$ algebra, $A$ be a formal cdga and $\xi\in\MC(V,A)$.  We may regard
$\xi$ as a (formal) $A$-linear derivation of the formal cdga
$A\otimes \hat{S}\Sigma^{-1} V^*$; indeed $\xi$ could be viewed as a linear
function on $\Sigma^{-1} V^*\cong 1\otimes \Sigma^{-1} V^*\subset
A\otimes \hat{S}\Sigma^{-1} V^*$ and we extend it to the whole of $A\otimes \hat{S}\Sigma^{-1} V^*$ by the $A$-linear Leibniz rule.

Exponentiating the derivation $\xi$ we obtain an automorphism of $A\otimes \hat{S}\Sigma^{-1} V^*$ so that $e^\xi:=\id+\xi+\frac{\xi^2}{2!}+\ldots$; the convergence of this formal power series is ensured by the formality of $A$. Then it is straightforward to see that the MC condition on $\xi$ implies (in fact, is equivalent to) that the formal derivation $e^\xi (d_{A\otimes\Sigma V}+m^A) e^{-\xi}=e^\xi m^Ae^{-\xi}-d_{A\otimes \Sigma V}(\xi)$ is an $L_\infty$ structure on $A\otimes \hat{S}\Sigma^{-1} V^*$, in other words, that it has no constant term. We will write this new $A$-linear $L_\infty$ structure on $V$ as $((A\otimes V)^\xi,m^\xi)$ and call it the \emph{twisting} of $m$ by $\xi$. 
Note that the formal derivation $e^\xi m^Ae^{-\xi}-d_{A\otimes \Sigma V}(\xi)$ is also defined on $\tilde{A}\otimes \hat{S}\Sigma^{-1}
 V^*$ and thus, we can define the unital version of the MC twisting: $(\tilde{A}\otimes V)^\xi$.
The following result shows that homotopic $MC$ elements determine weakly equivalent twistings.
\begin{prop}\label{hom1}
Let $\xi, \eta$ be two homotopic elements in $\MC(V,A)$ where $(V,m)$ is an $L_\infty$ algebra and $A$ is a formal cdga. Then:
\begin{enumerate}
\item  The $L_\infty$ algebras $(\tilde{A}\otimes V)^\xi$ and $(\tilde{A}\otimes V)^\eta$ are $L_\infty$ isomorphic.
\item The $L_\infty$ algebras $(A\otimes V)^\xi$ and $(A\otimes V)^\eta$ are $L_\infty$ isomorphic.
\end{enumerate}
\end{prop}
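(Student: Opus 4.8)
The plan is to promote a homotopy between $\xi$ and $\eta$ to an honest $L_\infty$-isomorphism of the twisted algebras, by twisting with the homotopy itself over $\ground[z,dz]$ and then integrating along $z$, in the same spirit as the proof of Theorem~\ref{path}.

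\emph{Step 1: twisting commutes with base change.} For a morphism of (formal) cdgas $\psi\colon A'\to A''$ and any $\zeta\in\MC(V,A')$ one has $\psi_*\zeta\in\MC(V,A'')$, and since $d^\zeta=e^\zeta(d+m)e^{-\zeta}$ is assembled from $\zeta$, the ambient differential and the $L_\infty$ operations by universal formulas, we get $(A''\otimes V)^{\psi_*\zeta}=A''\otimes_{A'}(A'\otimes V)^\zeta$; likewise for the unital twistings. Applying this to the homotopy $h\in\MC(V,A[z,dz])$ with $h|_{z=0}=\xi$, $h|_{z=1}=\eta$ and to the two evaluations $|_{z=0},|_{z=1}\colon A[z,dz]\to A$, we obtain an $A[z,dz]$-linear $L_\infty$ structure $m^h$ on $A[z,dz]\otimes V$ which specializes at the endpoints to $m^\xi$ and $m^\eta$. (The exponential series defining $m^h$ converge because $A$ is formal, exactly as in the remark on homotopy of MC elements.) The same holds with $A$ replaced by $\tilde A$.

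\emph{Step 2: integration.} Decompose the twisted differential $d^h$ on $(A\otimes\hat S\Sigma^{-1}V^*)[z,dz]$ according to $dz$-degree as $d^h=d^{a(z)}+dz\,(\partial_z+\beta(z))$, where $a(z)\in\MC(V,A[z])$ is a polynomial path of MC elements from $\xi$ to $\eta$, $d^{a(z)}$ the corresponding $z$-family of twisted differentials, and $\beta(z)$ an $A[z]$-linear derivation of (the unital completion of) $A\otimes\hat S\Sigma^{-1}V^*$. Writing out $(d^h)^2=0$ in $dz$-degree $1$ gives, besides the MC equations for $a(z)$, the relation $\partial_z\bigl(d^{a(z)}\bigr)=\bigl[d^{a(z)},\beta(z)\bigr]$. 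Let $\Psi(z)$ be the solution of $\partial_z\Psi(z)=\beta(z)\,\Psi(z)$, $\Psi(0)=\id$ (a $z$-ordered exponential, which is an algebra automorphism for each $z$), and set $\Phi:=\Psi(1)$. Differentiating $\Psi(z)^{-1}d^{a(z)}\Psi(z)$ in $z$ and using the displayed relation shows it is independent of $z$; comparing $z=0$ with $z=1$ yields $\Phi^{-1}d^{\eta}\Phi=d^{\xi}$, i.e.\ $\Phi$ is an $L_\infty$-isomorphism $(\tilde A\otimes V)^\xi\to(\tilde A\otimes V)^\eta$, proving (1). (For (1) alone one may argue more economically: $e^\eta\circ e^{-\xi}$ already conjugates $d^\xi$ into $d^\eta$; the homotopy becomes essential only for the non-unital version.)

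\emph{Step 3: the non-unital statement.} Here one must verify that the isomorphism $\Phi$, a priori only defined on the unital completion, descends to the non-unital representing cdga $A\otimes\hat S\Sigma^{-1}V^*$ of $(A\otimes V)^\xi$ — equivalently, that it can be normalized so that its linear component on $V$ is the identity, whence it automatically preserves the augmentation ideal and is a morphism of the honestly $L_\infty$ (non-unital) algebras. I expect this bookkeeping, together with the convergence estimate for the ordered exponential, to be the main obstacle: one needs that the iterated integrals of products of $k$ of the derivations $\beta(z)$, each of which lowers the symmetric arity by one while accumulating a $k$-fold product from the canonical ideal of $A[z]$, tend to zero — again a consequence of the formality of $A$ — while keeping careful track of units, augmentations and the arity filtration. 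The remaining point, that $\Phi$ intertwines all the higher brackets and not merely a dgla bracket, is a routine if somewhat lengthy computation, since $\beta(z)$ is an ordinary derivation of the free graded-commutative algebra with components in every arity.
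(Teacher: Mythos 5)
Your overall route is essentially the paper's: you interpret the homotopy $h$ as an MC element over $A[z,dz]$, observe that the twisted structure $m^h$ interpolates between $m^\xi$ and $m^\eta$, and then produce the isomorphism as a gauge transformation. The paper gets the gauge transformation by viewing the twisted structures as MC elements of the dgla $\g\otimes \tilde{A}$, where $\g$ consists of derivations of $\hat{S}\Sigma^{-1}V^*$ with no constant (and linear) term, and citing homotopy-implies-gauge-equivalence (Remark \ref{SS}, \cite{CL'}); your Step 2 simply reproves that cited implication by the $dz$-decomposition and the ordered exponential, which is fine up to the usual sign and ordering conventions.

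There is, however, a genuine gap, and it sits exactly where the homotopy hypothesis has to be used. Your parenthetical claim that for (1) the map $e^{\eta}\circ e^{-\xi}$ suffices is wrong: that map does conjugate $d^{\xi}$ into $d^{\eta}$, but it sends a generator $v^*\in\Sigma^{-1}V^*$ to $v^*+(\eta-\xi)(v^*)+\cdots$ with $(\eta-\xi)(v^*)\in A$, i.e.\ it has a nonzero constant term and does not preserve the augmentation ideal of $\tilde{A}\otimes\hat{S}\Sigma^{-1}V^*$; hence it is not a morphism of ($\tilde{A}$-linear, augmented) formal cdgas and not an $L_\infty$ morphism. If this argument were admissible, part (1) would hold for \emph{arbitrary}, non-homotopic MC elements (already $e^{-\xi}$ alone ``untwists'' $d^{\xi}$ to the untwisted differential), which empties the statement. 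The same confusion infects Step 3: what has to be checked about $\Phi$ is not that it can be ``normalized so that its linear component is the identity'' (neither needed nor true -- its linear component is $\id$ plus terms with coefficients in $A_+$), nor is the main point convergence; it is that $\beta(z)$ has \emph{no constant term}, i.e.\ its values on generators lie in $\tilde{A}[z]\otimes\hat{S}^{\geq 1}\Sigma^{-1}V^*$. This is precisely the coefficient of $dz$ in the MC equation for $h$: writing $h=a(z)+dz\,b(z)$, that component says $\partial_z a$ equals the covariant derivative of $b$ along $a$, which is exactly the cancellation of the constant terms of $-\partial_z a$ and of $e^{a}[b,d+m]e^{-a}$ inside $\beta(z)$. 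Once this is verified, $\Psi(z)$ preserves the augmentation ideal, $\Phi$ is an honest $A$-linear $L_\infty$ isomorphism, and parts (1) and (2) follow simultaneously -- the unital/non-unital distinction is not where the difficulty lies. (Also note that a constant-term-free $\beta$ does not lower arity; convergence of the ordered exponential comes from the canonical filtration of $A$, as in the definition of $e^{\xi}$ itself. The paper's citation of gauge equivalence avoids all this bookkeeping because the gauge element is taken from the start in $\g\otimes\tilde{A}$, so its exponential automatically preserves the augmentation ideal.)
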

\begin{proof}
Let us prove the first statement; the proof of the second statement is similar. A homotopy between $\xi$ and $\eta$ is an MC element $h\in \MC(\tilde{A}\otimes V[z,dz])$ that specializes to $\xi$ and $\eta$ at $z=0$ and $z=1$ respectively. Consider the $L_\infty$ algebra $(\tilde{A}\otimes V[z,dz])^h$, the twisting of $\tilde{A}\otimes V[z,dz]$ by $h$. This $L_\infty$ algebra structure can be viewed as an element in $\MC(\g, A[z,dz])$ where $\g$ is the Lie algebra of formal derivations of $\hat{S}\Sigma^{-1} V^*$ having no constant and linear terms. In other words, $h$ is a homotopy between two elements in $\MC(\g,\tilde{A})$. Such a homotopy implies gauge equivalence (see Remark \ref{SS}) and it follows that the $L_\infty$ algebras $(A\otimes V)^\xi$ and $(A\otimes V)^\eta$ are $L_\infty$ isomorphic as desired.
\end{proof}
\begin{rem}
Let $V$ be a dgla (as opposed to an $L_\infty$ algebra), not necessarily nilpotent. Then one can define its twisting without tensoring with a formal cdga; namely for $\xi\in\MC(V):=\MC(V,\ground)$ we get a twisted dgla $V^\xi$ whose Lie bracket is the same as that of $V$ and the differential is given as $d^\xi(v)=d(v)+[\xi,v]$ for $v\in V$. Then it is easy to see that in this situation the proof of Proposition \ref{hom1} still implies that two homotopic MC elements give rise to $L_\infty$ isomorphic twisted dglas. However the next result uses the formal cdga variable in an essential way.
\end{rem}
\begin{prop}\label{hom2}\
\begin{enumerate}
\item
Let $f:(V,m_V)\to (U,m_U)$ be an $L_\infty$  quasi-isomorphism, $A$ be a formal cdga, $\xi\in\MC(V,A)$ and $f_*(\xi)$ be the element in $\MC(U,A)$ corresponding to $\xi$ under the map $f_*:\MC(V,A)\to\MC(U,A)$ induced by $f$. Then there are natural $L_\infty$ quasi-isomorphisms $f^\xi:(A\otimes V)^\xi\to(A\otimes U)^{f_*(\xi)}$ and $\tilde{f}^\xi:(\tilde{A}\otimes V)^\xi\to(\tilde{A}\otimes U)^{f_*(\xi)}$
\item
Let $(V,m)$ be an $L_\infty$ algebra, $g:A\to B$ be a weak equivalence between two formal cdgas $A$ and $B$, $\xi\in\MC(V,A)$ and $g_*(\xi)$ be the element in $\MC(V,B)$ corresponding to $\xi$ under the map $g_*:\MC(V,A)\to\MC(V,B)$ induced by $g$. Then there are natural $L_\infty$ quasi-isomorphisms $g^\xi:(A\otimes V)^\xi\to(B\otimes V)^{g_*(\xi)}$ and  $\tilde{g}^\xi:(\tilde{A}\otimes V)^\xi\to(\tilde{B}\otimes V)^{g_*(\xi)}$.
\end{enumerate}
\end{prop}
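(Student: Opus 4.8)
In both parts the recipe is the same: write the asserted $L_\infty$ map explicitly on representing formal cdgas, check by a short computation that it is a morphism of formal cdgas intertwining the twisted differentials, and verify the quasi-isomorphism statement by passing to the associated graded of a filtration for which the twisting becomes invisible; in part (2) an extra reduction is needed to handle an arbitrary weak equivalence.

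\emph{Part (1).} Regard $f$ as a morphism $f^*\colon\hat S\Sigma^{-1}U^*\to\hat S\Sigma^{-1}V^*$ of formal cdgas, so that $f_*(\xi)=\xi\circ f^*\in\MC(U,A)$, and set
\[
f^\xi:=e^{\xi}\circ(\id_A\otimes f^*)\circ e^{-f_*(\xi)}\colon A\otimes\hat S\Sigma^{-1}U^*\longrightarrow A\otimes\hat S\Sigma^{-1}V^*,
\]
with $\xi$ and $f_*(\xi)$ viewed as $A$-linear derivations of the respective formal cdgas and the exponentials converging by formality of $A$; let $\tilde f^\xi$ be given by the same formula over $\tilde A$. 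Because $\id_A\otimes f^*$ is an algebra map and $e^{\pm\xi}$, $e^{\pm f_*(\xi)}$ are algebra automorphisms, $f^\xi$ is a morphism of formal cdgas, and a one-line cancellation---using only that $\id_A\otimes f^*$ intertwines the \emph{untwisted} differentials $d+m^A_U$ and $d+m^A_V$---shows that $f^\xi$ carries $e^{f_*(\xi)}(d+m^A_U)e^{-f_*(\xi)}$ to $e^{\xi}(d+m^A_V)e^{-\xi}$. To see that $f^\xi$ is an $L_\infty$ quasi-isomorphism it suffices to see that its linear part is a quasi-isomorphism, and for this I would filter $A\otimes\hat S\Sigma^{-1}V^*$ (and likewise the $U$-version) by $F_p(A)\otimes\hat S\Sigma^{-1}V^*$, using the canonical filtration of $A$. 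This is an admissible filtration, the twisted differentials preserve it, and---since the derivations $\xi$ and $f_*(\xi)$ strictly raise the $A$-filtration---the exponentials act as the identity on the associated graded; hence $\Gr(f^\xi)$ reduces to $\id_{\Gr A}\otimes f^*$ between the \emph{untwisted} associated graded cdgas $(\Gr A)\otimes\hat S\Sigma^{-1}(-)^*$, whose linear part $\id_{\Gr A}\otimes f_1$ is a quasi-isomorphism because $f_1$ is one and $\Gr A$ is a complex over a field. Completeness of the filtration then forces the linear part of $f^\xi$ itself to be a quasi-isomorphism (this is also the content of Proposition~\ref{filter} combined with Proposition~\ref{char}(1)), so $f^\xi$ is an $L_\infty$ quasi-isomorphism; the same argument handles $\tilde f^\xi$, and naturality is evident from the formula.

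\emph{Part (2).} Here the relevant map of representing formal cdgas is $g\otimes\id_{\hat S\Sigma^{-1}V^*}$, semilinear over $g$. One checks directly that $g\otimes\id$ carries the derivation $\xi$ to the derivation $g_*(\xi)$---they agree on the linear generators $\Sigma^{-1}V^*$ and both satisfy the Leibniz rule over $g\otimes\id$---so $g\otimes\id$ commutes with the twisting exponentials and therefore intertwines the twisted differentials with no further conjugation; set $g^\xi:=g\otimes\id$, and similarly over $\tilde A\to\tilde B$. If $g$ happens to be a \emph{filtered} quasi-isomorphism for admissible filtrations $F_\bullet A$, $F_\bullet B$, then filtering the $A$- and $B$-factors by these makes the twisting invisible on the associated graded exactly as in Part (1), and $\Gr(g^\xi)$ reduces to $\Gr(g)\otimes\id$, a quasi-isomorphism; so $g^\xi$ is an $L_\infty$ quasi-isomorphism in that case. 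The point that needs care---and the main obstacle---is that a general weak equivalence $g$ need not be a filtered quasi-isomorphism for the canonical filtrations (Remark~\ref{filt}(2)), so this direct argument does not apply. To get around this I would invoke the zig-zag of Remark~\ref{filt}(3): the commutative square whose verticals are the adjunction maps $q(A)\colon\C\L(A)\to A$, $q(B)\colon\C\L(B)\to B$ and whose top edge is $\C\L(g)$, all three of which \emph{are} filtered quasi-isomorphisms. Since $\hat S\Sigma^{-1}V^*$ is cofibrant and $q(A)$ is a surjective weak equivalence, $\xi$ lifts to some $\tilde\xi\in\MC(V,\C\L(A))$ with $q(A)_*(\tilde\xi)=\xi$; applying the functorial operations $(-)\otimes V$ and MC-twisting to the square yields a commutative square of $L_\infty$ algebras whose three sides $q(A)^{\tilde\xi}$, $q(B)^{\C\L(g)_*(\tilde\xi)}$ and $\C\L(g)^{\tilde\xi}$ are each of the filtered-quasi-isomorphism type just treated, hence $L_\infty$ quasi-isomorphisms; the two-out-of-three property for quasi-isomorphisms of complexes then gives the same for $g^\xi$. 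Naturality is again immediate, and the unital statements are proved identically with $\tilde A$, $\tilde B$.
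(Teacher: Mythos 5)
Your proposal is correct, and Part (1) coincides with the paper's own argument: the same conjugated map $e^{\xi}(\id\otimes f)e^{-f_*(\xi)}$ (the paper writes it as $e^{f_*(\xi)}fe^{-\xi}$ with the opposite convention on directions), the same one-line cancellation using that $f$ intertwines the untwisted structures, and the same passage to the associated graded of the canonical filtration of $A$ to see that the linear component reduces to $\id\otimes f_1$. In Part (2) you take a mildly different route through the same key idea. The paper first rectifies: it uses Part (1) to replace $V$ by a quasi-isomorphic strict dgla, so that the twisted differential is just $d+[\xi,-]$, and then says ``without loss of generality $g$ is a filtered map'' with a pointer to Remark \ref{filt}(3), before running the associated-graded argument. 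You skip the rectification (your observation that the higher twisting terms $m_i(\xi,\dots,\xi,-)$, $i\ge 2$, strictly raise the $A$-filtration makes it unnecessary) and instead make the ``WLOG filtered'' step explicit: you lift $\xi$ along the acyclic fibration $q(A)\colon\C\L(A)\to A$ using cofibrancy of $\hat S\Sigma^{-1}V^*$, twist the whole commutative square of Remark \ref{filt}(3), apply the filtered case to its three filtered edges, and conclude by two-out-of-three. This is a more complete rendering of a reduction the paper leaves terse (transporting the MC element along the zig-zag is exactly the point that needs the lifting you supply), at the cost of a slightly longer argument; the paper's rectification buys a cleaner formula for the twisted differential but adds a dependency on strictification of $L_\infty$ algebras that your version avoids.
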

\begin{proof}
We restrict ourselves with proving the statements involving $f^\xi$ and $g^\xi$; the proofs for $\tilde{f}^\xi$ and $\tilde{g}^\xi$ are completely analogous. For (1) set $\tilde{m}_V:=d_{A\otimes\Sigma V}+ m^A_V$ and  denote by $\tilde{m}^\xi_V$ the formal $A$-linear derivation of $A\otimes \hat{S}\Sigma^{-1} V^*$ obtained by
twisting $\tilde{m}_V$ with $\xi$ so that $\tilde{m}^\xi_V=e^\xi\tilde{m}^A_V e^{-\xi}$; similarly set
$\tilde{m}_U:=d_{A\otimes\Sigma U}+ m^A_U$ and $\tilde{m}^\xi_U=e^{f_*(\xi)}\tilde{m}^A_U e^{-f_*(\xi)}$. Then the map $f^\xi:=e^{f_*(\xi)}fe^{-\xi}:\hat{S}\Sigma^{-1} U^*\to \hat{S}\Sigma^{-1} V^*$ determines an $L_\infty$ map $(A\otimes V)^\xi\to (A\otimes U)^{f_*(\xi)}$. Indeed,
\begin{align*}
f^\xi \tilde{m}^\xi_V=&[e^{f_*(\xi)}fe^{-\xi}][e^\xi \tilde{m}^A_V e^{-\xi}]\\
=&e^{f_*(\xi)}f  \tilde{m}^A_V e^{-\xi}\\
=&e^{f_*(\xi)} \tilde{m}^A_U f  e^{-\xi}\\
=&[e^{f_*(\xi)} \tilde{m}^A_U e^{-f_*(\xi)}][ e^{f_*(\xi)} f  e^{-\xi}]\\
=&\tilde{ m}^{f_*(\xi)}_U  f^\xi.
\end{align*}
Here we used the equality $f\tilde{ m}^A_V=\tilde{m}^A_Uf$ which holds since $f$ is an $L_\infty$ map.

Further, the 1-component of the map $f^\xi$ is a map between filtered dg vector spaces
\[f_1^\xi:A\otimes V\to A\otimes U\]
where the filtration is induced by the canonical multiplicative filtration on the formal cdga $A$.
It is clear that on the level of the associated graded the map $f^\xi_1$ reduces to $f_1$, the first
component of the original $L_\infty$ map between $V$ and $U$. Since the latter is a quasi-isomorphism
we conclude that $f_1^\xi$ is likewise a quasi-isomorphism as desired.

Now let us prove (2). Using $(1)$ and the fact that any $L_\infty$ algebra is $L_\infty$ quasi-isomorphic to a strict dgla we reduce the statement to the case when $V$ is a strict dgla. Furthermore, without loss of generality we can assume that $A\to B$ is a filtered map (with respect to some admissible filtrations on $A$ and $B$), cf. Remark \ref{filt}. Let $\xi\in\MC(V,A)$; the twisted differential on $(A\otimes V)^\xi$ will have the form $d^\xi=d_{V\otimes A}+[\xi,?]$ where $d_{V\otimes A}$ is the untwisted differential on $A\otimes V$. It follows that there is an isomorphism of associated graded dglas:
\[
\Gr(A\otimes V)^\xi\cong\Gr(A\otimes V)
\]
since the `twisted' part $[?,\xi]$ of the differential $d^\xi$ vanishes upon passing to the associated graded dglas. Similarly
\[
\Gr(B\otimes V)^{g_*\xi}\cong\Gr(B\otimes V)
\]
It follows that the map $g\otimes\id:A\otimes V\to B\otimes V$ induces a quasi-isomorphism
\[
\Gr(A\otimes V)^\xi\to
\Gr(B\otimes V)^{g_*\xi}
\]
and so $g\otimes\id$ must be a quasi-isomorphism.
\end{proof}
\begin{rem}
If an $L_\infty$ map $f:(V,m_V)\to(U,m_U)$ is not a quasi-isomorphism and $\xi\in\MC(V,A)$ then there is still an $L_\infty$ map $f^\xi:(A\otimes V)^\xi\to(A\otimes U)^{f_*(\xi)}$ given by the same formula $f^\xi:=e^{f_*(\xi)}fe^{-\xi}$; of course it need not be an $L_\infty$ quasi-isomorphism. It is straightforward to write $f^\xi$ in components; one has for $x_i\in \Sigma^{-1} V, i=1,2\ldots$:
\[
f^\xi_n(x_1,\ldots, x_n)=\sum_{i=0}^\infty \frac{1}{i!}f_{n+i}(\xi,\ldots, \xi,x_1,\ldots, x_n).
\]
We will not need this explicit formula.
\end{rem}
Combining Propositions \ref{hom1} and \ref{hom2} we obtain the following result.
\begin{cor}
Let $f:(V,m_V)\to(U,m_U)$ be an $L_\infty$ quasi-isomorphism and $A\to B$ be a weak equivalence between formal cdgas $A$ and $B$. For $\xi\in\MCmod(V,A)$ denote by $\tilde{\xi}\in\MCmod(U,A)$ the equivalence class corresponding to $\xi$ under the bijection $\MCmod(V,A)\cong\MCmod(U,B)$ induced by $f$ and $g$. Then the $L_\infty$ algebras $(A\otimes V)^\xi$ and $(B\otimes U)^{\tilde{\xi}}$ are $L_\infty$ quasi-isomorphic.
\end{cor}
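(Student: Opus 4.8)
The plan is to produce the required $L_\infty$ quasi-isomorphism by fixing one representative and concatenating the two halves of Proposition \ref{hom2}, invoking Proposition \ref{hom1} only to check independence of the choices. Write $g\colon A\to B$ for the given weak equivalence. First I would choose a Maurer--Cartan element $\xi\in\MC(V,A)$ representing the given class in $\MCmod(V,A)$, form $f_*(\xi)\in\MC(U,A)$ and then $g_*f_*(\xi)\in\MC(U,B)$, and verify that the class of $g_*f_*(\xi)$ is exactly $\tilde\xi$. For this one unwinds Theorem \ref{formdef}: under the identifications $\MCmod(V,A)\cong[\hat{S}\Sigma^{-1}V^*,A]_*$ and $\MCmod(U,B)\cong[\hat{S}\Sigma^{-1}U^*,B]_*$, the bijection $\MCmod(V,A)\cong\MCmod(U,B)$ is the composite of the bijection of Corollary \ref{hin}, (2) for the weak equivalence of cofibrant cdgas representing $f$ (which on representatives is precomposition with that map, i.e.\ $f_*$) with the bijection for the weak equivalence $g$ (which on representatives is postcomposition with $g$, i.e.\ $g_*$); hence the composite bijection sends $[\xi]$ to $[g_*f_*(\xi)]$.

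Next I would apply Proposition \ref{hom2}, (1) to get a natural $L_\infty$ quasi-isomorphism $f^\xi\colon(A\otimes V)^\xi\to(A\otimes U)^{f_*(\xi)}$, and Proposition \ref{hom2}, (2)---applied to the $L_\infty$ algebra $U$, the weak equivalence $g$ and the element $f_*(\xi)$---to get a natural $L_\infty$ quasi-isomorphism $g^{f_*(\xi)}\colon(A\otimes U)^{f_*(\xi)}\to(B\otimes U)^{g_*f_*(\xi)}$. Their composite $g^{f_*(\xi)}\circ f^\xi$ is an $L_\infty$ morphism whose linear component is the composite of the linear components of the two factors, hence a quasi-isomorphism; so it is an $L_\infty$ quasi-isomorphism $(A\otimes V)^\xi\to(B\otimes U)^{g_*f_*(\xi)}=(B\otimes U)^{\tilde\xi}$ for this representative.

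To finish, I would observe that the statement is insensitive to the choice of representative: if $\xi'\sim\xi$ in $\MC(V,A)$ then $f_*(\xi')\sim f_*(\xi)$ and $g_*f_*(\xi')\sim g_*f_*(\xi)$, since homotopy of MC elements is Sullivan homotopy of the representing formal-cdga maps and is manifestly preserved by pre- and postcomposition; then two applications of Proposition \ref{hom1} give $L_\infty$ isomorphisms $(A\otimes V)^\xi\cong(A\otimes V)^{\xi'}$ and $(B\otimes U)^{g_*f_*(\xi)}\cong(B\otimes U)^{g_*f_*(\xi')}$, so the $L_\infty$ quasi-isomorphism type obtained above depends only on the two classes.

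The only substantial ingredients are Propositions \ref{hom1} and \ref{hom2}, which are already in hand, so there is no serious analytic or homotopical obstacle here; the one place demanding care is the identification in the first paragraph of the abstract bijection of Theorem \ref{formdef} with the explicit assignment $[\xi]\mapsto[g_*f_*(\xi)]$, which requires tracing through the chain of adjunctions and homotopy equivalences rather than quoting a black box. Everything else is formal.
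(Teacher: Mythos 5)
Your argument is correct and follows the same route the paper intends: the corollary is stated there as an immediate consequence of combining Propositions \ref{hom1} and \ref{hom2}, which is exactly what you do, merely spelling out the composite $g^{f_*(\xi)}\circ f^{\xi}$, the identification of the bijection of Theorem \ref{formdef} with $[\xi]\mapsto[g_*f_*(\xi)]$, and the independence of representatives via Proposition \ref{hom1}. No gaps; the extra care in the first paragraph is a reasonable elaboration of what the paper leaves implicit.
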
\noproof
\section{Examples of twisting: Chevalley-Eilenberg and Harrison cohomology}
In this section we explain how to treat Chevalley-Eilenberg cohomology of dglas and Harrison-Andr\'e-Quillen cohomology of cdgas as instances of MC twisting. First let us recall the standard definitions of Chevalley-Eilenberg and Harrison cohomology, cf. for example \cite{Loday, BL}.
\begin{defi}\label{ungraded}\
\begin{itemize}
\item
Let $V$ be a Lie algebra and $M$ be a $V$-module. Then the Chevalley-Eilenberg complex of $V$ with coefficients in $M$ is defined as
$C^n_{\CE}(V,M)\subset \Hom(V^{\otimes n}, M)$ consisting of skew-symmetric multilinear functions on $V$ with values in $M$. The differential
$C^n_{\CE}(V,M)\to C^{n+1}_{\CE}(V,M)$ is defined as follows:
\begin{align}\label{CE}
(df)(v_1,\ldots, v_{n+1})&=\sum_{1\leq i<j\leq n+1}(-1)^{i+j-1}f([v_i,v_j],v_1,\ldots,\hat{v}_i,\ldots \hat{v}_j,\ldots v_{n+1})\\
\nonumber &+\sum_{i=1}^{n+1}(-1)^iv_if(v_1,\ldots,\hat{v}_i,\ldots, v_{n+1}).
\end{align}
\item
Let $A$ be a commutative algebra and $M$ be an $A$-module. Then the Harrison-Andr\'e-Quillen complex (or simply Harrison for short) of $A$ with coefficients in $M$ is defined as $C^n_{\Harr}(A,M)\subset \Hom(A^{\otimes n}, M)$ where $f:A^{\otimes n}\to M$ belongs to $C^n_{\Harr}(A,M)$ if $f$ vanishes on all shuffle products of elements in $A^{\otimes k}$ and $A^{\otimes i}$ with $k+i=n$. The differential $C^n_{\Harr}(A,M)\to C^{n+1}_{\Harr}(A,M)$ is the restriction of the usual Hochschild differential:
\begin{align}\label{AQ}
(df)(a_1,\ldots, a_{n+1})=&a_1f(a_2,\ldots, a_{n+1})\\ \nonumber+&\sum_{k=1}^n(-1)^kf(a_1,\ldots,a_ka_{k+1},\ldots, a_{n+1})+(-1)^{k+1}f(a_1,\ldots, a_n)a_{n+1}.
\end{align}
\end{itemize}
\end{defi}
We will need a more general type of the Chevalley-Eilenberg cohomology when $V$ is a dgla or even an $L_\infty$ algebra.
The version of the Harrison cohomology we require will be for $A$ and $B$ being formal cdgas. The module of coefficients will also be assumed to be dg; additionally it will carry the structure of a dgla (or $L_\infty$) algebra in the Chevalley-Eilenberg case and that of a formal cdga in the Harrison case. In this more general setup it is much more convenient to package the Chevalley-Eilenberg and Harrison complexes within the formalism of MC twistings.
\begin{defi}\label{graded}\
\begin{itemize}
\item
Let $(V,m_V)$ be an $L_\infty$ algebra representable by the formal cdga $\hat{S}\Sigma^{-1} V^*$, $(U, m_U)$ be another $L_\infty$ algebra and $\xi\in \MC(U,\hat{S}\Sigma^{-1} V^*)$ be the MC element corresponding to an $L_\infty$ map $V\to U$. Then the Chevalley-Eilenberg complex of $V$ with coefficients in $U$ is defined as
\[
C_{\CE}^\xi(V,U):=(\hat{S}\Sigma^{-1} V^*\otimes U)^\xi
\]
\item
Let $A$ and $B$ be formal cdgas and $\xi\in MC(\L(A),B)$ be the MC element corresponding to a map of formal cdgas $A\to B$.  Then the Harrison complex of $A$ with coefficients in $B$ is defined as
\[
C_{\Harr}^\xi(A,B):=(\tilde{B}\otimes \L(A))^\xi.
\]
\end{itemize}
\end{defi}
\begin{rem}
The reader will note the apparent asymmetry in our exposition of Chevalley -Eilenberg and Harrison cohomologies; indeed it is possible to treat the latter for not necessarily formal cdgas or even $C_\infty$ algebras, cf. \cite{HL} for a detailed discussion of $C_\infty$ algebras. The reason for this asymmetry is that the present framework is adequate for the needs of rational homotopy theory; additionally $C_\infty$ algebras are more difficult to work with than $L_\infty$ algebras; particularly the notion of an MC element of a dgla with values in a $C_\infty$ algebra is combinatorially rather messy and we do not have clear applications justifying this added complexity.
\end{rem}
\begin{prop}
If $V$ and $U$ are Lie algebras and $\xi:V\to U$ is a Lie algebra map then Definition \ref{ungraded} (1) is equivalent to Definition \ref{graded} (1) so that there is an isomorphism
 \[
C_{\CE}^\xi(V,U)\cong C_{\CE}(V,U).
 \]
Similarly if $A$ and $B$ are (non-unital) finite dimensional nilpotent algebras and $A\to B$ is an algebra map then Definition \ref{ungraded} (2) is equivalent to Definition \ref{graded} (2) in the following sense
\[
C_{\Harr}^\xi(A,B)\cong C_{\Harr}(A,B).
\]

\end{prop}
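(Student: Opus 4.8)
The plan is, in both cases, to identify the twisted object with the classical cochain complex by matching three pieces of data: the underlying graded vector space, the ``internal'' part of the differential, and the part of the differential coming from the coefficient action. A key simplification throughout is that the coefficient object ($U$, respectively $B$) is a \emph{strict} structure with no internal differential: hence all the higher twisting terms $\frac{1}{i!}m^A_i(\xi^{\otimes i},-)$ beyond the quadratic one vanish, the twisting of a dgla stays a dgla, and the twisted differential is simply $d^\xi=d+[\xi,-]$, where $d$ is the untwisted differential on the tensor product and $[\xi,-]$ is the adjoint of $\xi$ for the evident graded Lie bracket on that tensor product.

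For the Chevalley--Eilenberg statement I would argue as follows. As recalled in the text, the formal cdga $\hat S\Sigma^{-1}V^*=\C(V)$ is the Chevalley--Eilenberg cochain complex of $V$ with trivial coefficients, so its degree-$n$ part is $\Lambda^nV^*$ and its differential is dual to the bracket $\Lambda^2V\to V$. Tensoring with $U$ we get, in degree $n$, the space $\Lambda^nV^*\otimes U\cong\Hom(\Lambda^nV,U)=C^n_{\CE}(V,U)$ (the completed tensor product in the definition of $C^\xi_{\CE}$ makes this an equality even when $V$ is infinite-dimensional), equipped with the differential that is exactly the first sum in \eqref{CE} --- the Chevalley--Eilenberg differential for the \emph{trivial} $V$-action on $U$. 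The MC element $\xi$ is, by construction, the Lie map $V\to U$, viewed as a degree-one cochain in $\Hom(V,U)\subset\hat S\Sigma^{-1}V^*\otimes U$; evaluating the bracket $[a\otimes u,a'\otimes u']=\pm\,aa'\otimes[u,u']_U$ one finds that $[\xi,f]$, for $f\in\Hom(\Lambda^nV,U)$, is the cochain $(v_1,\dots,v_{n+1})\mapsto\sum_i\pm[\xi(v_i),f(v_1,\dots,\widehat{v_i},\dots,v_{n+1})]$, which is precisely the second sum in \eqref{CE} once one recalls that the $V$-module structure induced on $U$ by $\xi$ is $v\cdot u=[\xi(v),u]$. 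Adding the two contributions gives $d^\xi=d_{\CE}$; the same bracket computation shows that the graded Lie bracket on $(\hat S\Sigma^{-1}V^*\otimes U)^\xi$ coincides with the standard one on $C_{\CE}(V,U)$, so the identification is in fact an isomorphism of dglas.

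The Harrison statement is handled in the same way. Since $A$ is finite-dimensional, the free Lie algebra $\L(A)$ on $\Sigma A^*$ is the continuous dual of the cofree Lie coalgebra on $\Sigma^{-1}A$; dualising the classical description of Harrison cochains as the functionals on $A^{\otimes n}$ annihilating all shuffle products, one identifies the degree-$n$ part of $\tilde B\otimes\L(A)$ with $C^n_{\Harr}(A,\tilde B)$, which for $n\ge 1$ is $C^n_{\Harr}(A,B)$ (passing to $\tilde B$ simply absorbs the terms of the Hochschild differential involving multiplication by a ring element). Under this identification the internal differential $d_{II}$ of $\L(A)$, which is the free-derivation extension of the dual of the product $A\otimes A\to A$, becomes the middle sum $\sum_{k}(-1)^kf(a_1,\dots,a_ka_{k+1},\dots,a_{n+1})$ of \eqref{AQ} restricted to Harrison cochains, while the twist $[\xi,-]$ by the MC element $\xi$ corresponding to the algebra map $A\to B$ --- which sits among the Lie generators $\tilde B\otimes\Sigma A^*$ --- produces, through the $\tilde B$-module structure, the two remaining outer terms $a_1f(a_2,\dots,a_{n+1})$ and $(-1)^{n+1}f(a_1,\dots,a_n)a_{n+1}$. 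Their sum is the Hochschild differential \eqref{AQ}, and again there are no higher corrections because $B$ is a strict algebra.

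I expect the real work to lie not in the structural matching above, which is essentially forced once the pieces are laid out, but in the bookkeeping: reconciling the suspension/desuspension conventions relating $V$ and $\Sigma^{-1}V^*$ (and $A$, $\Sigma A^*$), the Koszul signs built into the graded-commutative product and the induced Lie bracket on the tensor product, and the explicit signs $(-1)^{i+j-1},(-1)^i$ in \eqref{CE} and $(-1)^k,(-1)^{k+1}$ in \eqref{AQ}. For the Harrison half there is the additional, standard but slightly technical, point of establishing the duality between ``shuffle-annihilating $n$-cochains'' and the weight-$n$ part of the free Lie algebra on $\Sigma A^*$ --- equivalently, that the Harrison cochain complex is the continuous dual of the Lie coalgebra of indecomposables of the tensor coalgebra on $\Sigma^{-1}A$ --- where the finite-dimensionality of $A$ is used to keep all the dualities clean.
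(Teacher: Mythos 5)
Your argument follows essentially the same route as the paper's proof. In the Chevalley--Eilenberg half you identify the weight-$n$ piece $\hat{S}^n\Sigma^{-1}V^*\otimes U$ with the skew multilinear maps $V^{\otimes n}\to U$ (the paper phrases exactly this as the invariants-to-coinvariants isomorphism $\Hom(T^n\Sigma^{-1}V,U)^{S_n}\cong[\hat{T}^n\Sigma^{-1}V^*]_{S_n}\otimes U\cong\hat{S}^n\Sigma^{-1}V^*\otimes U$), and in the Harrison half you use the standard duality between shuffle-annihilating functionals on $A^{\otimes n}$ and the weight-$n$ part of the free Lie algebra on $\Sigma A^*$ (the paper states this as: such functionals correspond to the primitives of the Hopf algebra $\hat{T}\Sigma^{-1}A^*$, which form the free Lie algebra); in both cases the matching of the two summands of the twisted differential $d+[\xi,-]$ with the two groups of terms in \eqref{CE}, respectively \eqref{AQ}, and the remark that no higher twisting corrections arise because $U$ and $B$ are strict, is exactly the check the paper leaves as ``straightforward''.

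The one step that is wrong as written is your treatment of the unit in the Harrison case: it is not true that $C^n_{\Harr}(A,\tilde{B})=C^n_{\Harr}(A,B)$ for $n\geq 1$, since a shuffle-annihilating map $A^{\otimes n}\to\tilde{B}=B\oplus\ground$ may have a nonzero $\ground$-component; consequently $\tilde{B}\otimes\L(A)$ contains the extra summand $\ground\otimes\L(A)$, dual to the trivial-coefficient Harrison cochains, and this is not ``absorbed'' by rearranging terms of the differential. The identification one can actually prove, and the one the paper's own proof establishes, is between the classical complex $C_{\Harr}(A,B)$ of Definition \ref{ungraded} and $(B\otimes\L(A))^\xi$ (which in the paper's later notation is the truncated complex $\overline{C}_{\Harr}(A,B)$); the appearance of $\tilde{B}$ in Definition \ref{graded} versus $B$ in the proof is a looseness of the paper's formulation rather than something your parenthetical remark repairs. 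Replace that sentence by the identification with $B\otimes\L(A)$, and the rest of your Harrison argument --- $d_{II}$ giving the middle sum of \eqref{AQ} and $[\xi,-]$ giving the two outer terms via the multiplication in $B$ --- goes through exactly as in the paper.
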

\begin{proof}
Let us prove the first statement. Consider a map
\[
f:\Hom([T^n\Sigma^{-1} V],U)^{S_n}\to \Hom(T^n\Sigma^{-1} V, U)_{S_n}\cong [\hat{T}^n\Sigma^{-1} V^*]_{S_n}\otimes U\cong
\hat{S}^n\Sigma^{-1} V^*\otimes U,
\]
the natural isomorphism from $S_n$-invariants to $S_n$-coinvariants. The map $f$ identifies the $n$th cochains of the Lie algebra $V$ with coefficients in the Lie algebra $U$ in the sense of Definition \ref{ungraded} with $(\hat{S}^n\Sigma^{-1} V^*\otimes U)^\xi$. Recall that the differential in the dgla $(\hat{S}^n\Sigma^{-1} V^*\otimes U)^\xi$ has the form $[m_V,?]+[\xi,?]$; it is then straightforward to check that $[m_V,?]$ and $[\xi,?]$ correspond to the first and second terms in the formula (\ref{CE}) respectively.

For the second statement consider a map
\[
g:\Hom([T^n\Sigma^{-1} A],B)\to \hat{T}^n\Sigma^{-1} A^*\otimes B.
\]
Then the restriction of $g$ onto the space of those multilinear maps which vanish on shuffle products can be identified with $(\operatorname{Prim}\hat{T}\Sigma^{-1} A^*)\bigcap(\hat{T}^n\Sigma^{-1} A^*)\otimes B$ where $\operatorname{Prim}\hat{T}\Sigma^{-1} A^*$ is the space of primitive elements in the Hopf algebra $\hat{T}\Sigma^{-1} A^*$ (supplied with the standard cocommutative coproduct). This space  of primitive elements is further identified with the free Lie algebra on $\Sigma^{-1} A^*$; therefore $C^n_{\Harr}(A,B)$, the space of $n$th cochains of the algebra $A$ $V$ with coefficients in the Lie algebra $U$ in the sense of Definition \ref{ungraded} is isomorphic to $B\otimes \L ie(A)$. A simple check shows that the differential (\ref{AQ}) agrees with the differential in $[B\otimes \L ie(A)]^\xi$.
\end{proof}
\begin{rem}\
\begin{itemize}
\item
We see that in the situation of Definition
\ref{graded} \
the Chevalley-Eilenberg complex
$C_{\CE}(V,U)$ has the structure
of an $L_\infty$ algebra and the Harrison complex
$C_{\Harr}(A,B)$ has the structure of a dgla. According to our convention we should, therefore, view them as homologically graded dg spaces; however this would contradict with the traditional usage and so we retain the cohomological grading for these complexes.
\item
A natural question is whether one can extend our definitions to the case of an \emph{arbitrary module of coefficients}. We will now outline how this can be done in the Chevalley-Eilenberg case; the Harrison case could be dealt with similarly.

Suppose that $V$ is an $L_\infty$ algebra with the representing formal cdga $\hat{S}\Sigma^{-1} V^*$ and $M$ is an $L_\infty$ module over $V$; that means  that there is an $L_\infty$ map $f:V\to \End(M)$. Then we have the induced map
\[
f_*:\MC(V,\hat{S}\Sigma^{-1} V^*)\to \MC(\End(M),\hat{S}\Sigma^{-1} V^*).
\]
Consider the element $f_*(\xi)\in \MC(\End(M),\hat{S}\Sigma^{-1} V^*)$ where $\xi\in \MC(V,\hat{S}\Sigma^{-1} V^*)$ is the canonical MC element. The element $f_*(\xi)$ can be viewed
as an $\hat{S}\Sigma^{-1} V^*$-linear endomorphism of $\hat{S}\Sigma^{-1} V^*\otimes M$ and we define the Chevalley-Eilenberg complex of $V$ with coefficients in $M$ as
\[
C_{\CE}(V,M)=\hat{S}\Sigma^{-1} V^*\otimes M
\]
supplied with the twisted differential
$d^{f_*(\xi)}(a\otimes m)=d(a\otimes m)+f(a\otimes m)$ for $a\in\hat{S}\Sigma^{-1} V^*, m\in M$.
\end{itemize}
\end{rem}
From now on we will omit the superscript $\xi$  in the symbols for the Harrison and Chevalley-Eilenberg cohomology whenever the choice of the corresponding MC element is clear from the context. Additionally, we introduce the notion of \emph{truncated} Chevalley-Eilenberg and Harrison cohomology.
\begin{defi}\
\begin{itemize}
\item
Let $(V,m_V)$ be an $L_\infty$ algebra representable by the formal cdga $\hat{S}\Sigma^{-1} V^*$, $(U, m_U)$ be another $L_\infty$ algebra and $\xi\in \MC(U,\hat{S}\Sigma^{-1} V^*)$ be the MC element corresponding to an $L_\infty$ map $V\to U$. Then the truncated Chevalley-Eilenberg complex of $V$ with coefficients in $U$ is defined as
\[
\overline{C}_{\CE}(V,U):=([\hat{S}\Sigma^{-1} V^*]_+\otimes U)^\xi.
\]
\item
Let $A$ and $B$ be formal cdgas and $\xi\in MC(\L(A),B)$ be the MC element corresponding to a map of formal cdgas $A\to B$.  Then the truncated Harrison complex of $A$ with coefficients in $B$ is defined as
\[
\overline{C}_{\Harr}(A,B):=(B\otimes \L(A))^\xi.
\]
\end{itemize}
\end{defi}
It is clear that there $\overline{C}_{\CE}(V,U)$ and $\overline{C}_{\Harr}(A,B)$ are sub dglas of $C_{\CE}(V,U)$ and $C_{\Harr}(A,B)$; moreover there are  the following short exact sequences of dg vector spaces:
\[
\overline{C}_{\CE}(V,U)\to C_{\CE}(V,U)\to U;\]
\[
\overline{C}_{\Harr}(A,B)\to C_{\Harr}(A,B)\to B.
\]

It turns out that and Chevalley-Eilenberg cohomology of $L_\infty$ algebras reduce to Harrison cohomology of formal cdgas.
\begin{prop}\label{CEH}
Let $(V,m_V)$ and $(U,m_U)$ be $L_\infty$ algebras with representing formal cdgas $B=\hat{S}\Sigma^{-1} V^*$ and $A=\hat{S}\Sigma^{-1} U^*$ respectively; let $\hat{S}\Sigma^{-1} U^*\to \hat{S}\Sigma^{-1} V^*$ be the map of formal cdgas representing an $L_\infty$ map $V\to U$. Then there are natural $L_\infty$ quasi-isomorphisms:
\[
C_{\CE}(V,U)\cong C_{\Harr}(A,B);
\]
\[
\overline{C}_{\CE}(V,U)\cong \overline{C}_{\Harr}(A,B);
\]
\end{prop}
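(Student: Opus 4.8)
The plan is to reduce the statement to Proposition \ref{hom2}(1) by replacing the coefficient $L_\infty$ algebra $U$ with the strict dgla $\L(A)$, where $A=\hat{S}\Sigma^{-1} U^*$ is the representing formal cdga of $U$. The point is that $\C\L(A)=\hat{S}\Sigma^{-1}(\L A)^*$ is the formal cdga representing the strict $L_\infty$ algebra $\L(A)$, while $A$ represents $U$, and the adjunction map $q(A)\colon\C\L(A)\to A$ is a weak equivalence by Theorem \ref{Hinich}; hence by Proposition \ref{char}(1) it represents an $L_\infty$ quasi-isomorphism, which I denote $f\colon U\to\L(A)$.

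The next step is to identify the relevant Maurer-Cartan elements. Under the natural bijection $\MC(U,B)\cong\Hom_{\falg}(\hat{S}\Sigma^{-1} U^*,B)=\Hom_{\falg}(A,B)$ the element $\xi$ defining $C_{\CE}(V,U)$ is exactly the map of formal cdgas $A\to B$ representing the given $L_\infty$ map $V\to U$. The induced map $f_*\colon\MC(U,B)\to\MC(\L(A),B)$, read through the identifications $\MC(U,B)\cong\Hom_{\falg}(A,B)$ and $\MC(\L(A),B)\cong\Hom_{\falg}(\C\L(A),B)$, is precomposition with $q(A)$; thus $f_*(\xi)$ is represented by the composite $\C\L(A)\to A\to B$ of $q(A)$ with $\xi$, and by the definition of the Harrison complex together with the adjunction this is precisely the Maurer-Cartan element defining $C_{\Harr}(A,B)$ and $\overline{C}_{\Harr}(A,B)$.

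Then I would simply invoke Proposition \ref{hom2}(1) for the $L_\infty$ quasi-isomorphism $f\colon U\to\L(A)$, the formal cdga $B$, and the Maurer-Cartan element $\xi\in\MC(U,B)$. It produces natural $L_\infty$ quasi-isomorphisms $\tilde{f}^\xi\colon(\tilde{B}\otimes U)^\xi\to(\tilde{B}\otimes\L(A))^{f_*(\xi)}$ and $f^\xi\colon(B\otimes U)^\xi\to(B\otimes\L(A))^{f_*(\xi)}$, which unwind to the asserted natural quasi-isomorphisms $C_{\CE}(V,U)\to C_{\Harr}(A,B)$ and $\overline{C}_{\CE}(V,U)\to\overline{C}_{\Harr}(A,B)$ once one recalls that $C_{\CE}$ and $C_{\Harr}$ are the ``unital'' twistings while $\overline{C}_{\CE}$ and $\overline{C}_{\Harr}$ are the ``non-unital'' ones.

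The main obstacle is the middle step: checking that the Maurer-Cartan element $f_*(\xi)$ transported along $f$ literally agrees with the one built into the definition of the Harrison complex. This is bookkeeping with the adjunction isomorphisms $\Hom_{\L ie}(\L(B),\L(A))\cong\Hom_{\falg}(\C\L(A),B)$ and the counit $q(A)$, together with the compatibility of the two descriptions of $\MC(\L(A),B)$ (via $\Hom_{\falg}(\C\L(A),B)$ and via $\Hom_{\L ie}(\L(B),\L(A))$), and must be done carefully since the whole reduction rests on it; the remaining steps are direct applications of Propositions \ref{char} and \ref{hom2}.
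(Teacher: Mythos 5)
Your proposal is correct and is essentially the paper's own argument: the paper likewise uses the adjunction weak equivalence $\C\L(A)\to A$ (Theorem \ref{Hinich}) to get an $L_\infty$ quasi-isomorphism between $U$ and $\L(A)$, notes that the Maurer--Cartan element defining $C_{\Harr}(A,B)$ corresponds under it to the one defining $C_{\CE}(V,U)$, and concludes by Proposition \ref{hom2}. Your extra bookkeeping with $q(A)$ and the two descriptions of $\MC(\L(A),B)$ just makes explicit the step the paper states without detail, and it is carried out correctly.
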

\begin{proof}
We will restrict ourselves with proving the first isomorphism; the proof of the second one is similar. Denote by $\xi_1$ the MC element in $\hat{S}\Sigma^{-1} V^*\otimes U$ corresponding to the map $\xi:A=\hat{S}\Sigma^{-1} U^*\to B=\hat{S}\Sigma^{-1} V^*$; similarly denote by $\xi_2$ the MC element in $B\otimes \L(A)$ corresponding to $\xi$.
Recall that $C_{\CE}(V,U)=(\hat{S}\Sigma^{-1} V^*\otimes U)^{\xi_1}=(B\otimes U)^\xi$ and that $C_{\Harr}(A,B)=[B\otimes \L(A)]^{\xi_2}$. The adjunction morphism $\C\L(A)\to A$ is a weak equivalence of formal cdgas by Theorem \ref{Hinich} from which it follows that $\L(A)$ and $U$ are $L_\infty$ quasi-isomorphic. Note also that $\xi_2$ corresponds to $\xi_1$ under this $L_\infty$ quasi-isomorphism. Now the desired statement follows from Proposition \ref{hom2}.
\end{proof}
The Chevalley-Eilenberg and Harrison cohomology are homotopy invariant in the following sense.
\begin{prop}\
\begin{enumerate}
\item
Let $(V, m_V), (U,m_U)$ be two $L_\infty$ algebras with representing formal cdgas $\hat{S}\Sigma^{-1} V^*$ and $\hat{S}\Sigma^{-1} V^*$ respectively. Let $\xi,\eta:\hat{S}\Sigma^{-1} U^*\to \hat{S}\Sigma^{-1} V^*$ be two maps representing two homotopic $L_\infty$ maps $V\to U$. Then the $L_\infty$ algebras $C^\xi_{\CE}(V,U)$ and $C^\eta_{CE}(V,U)$ are $L_\infty$ isomorphic. Similarly the $L_\infty$ algebras $\overline{C}^\xi_{\CE}(V,U)$ and $\overline{C}^\eta_{CE}(V,U)$ are $L_\infty$ isomorphic.
\item
Let $A,B$ be two formal cdgas and $\xi,\eta:A\to B$ be two homotopic maps between them. Then the dglas $C^\xi_{\Harr}(A,B)$ and $C^\eta_{\Harr}(A,B)$ are $L_\infty$ isomorphic. Similarly the dglas $\overline{C}^\xi_{\Harr}(A,B)$ and $\overline{C}^\eta_{\Harr}(A,B)$ are $L_\infty$ isomorphic.
\end{enumerate}
\end{prop}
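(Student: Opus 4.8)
The strategy is to derive all four isomorphisms from Proposition \ref{hom1}, which already records that homotopic Maurer--Cartan elements produce $L_\infty$ isomorphic twistings, both in the unital ($\tilde A\otimes V$) and in the non-unital ($A\otimes V$) version. Indeed, the full complexes $C_{\CE}^\xi(V,U)=(\hat S\Sigma^{-1}V^*\otimes U)^\xi$ and $C_{\Harr}^\xi(A,B)=(\tilde B\otimes\L(A))^\xi$ are the unital twistings, while the truncated complexes $\overline C^\xi_{\CE}=([\hat S\Sigma^{-1}V^*]_+\otimes U)^\xi$ and $\overline C^\xi_{\Harr}=(B\otimes\L(A))^\xi$ are the non-unital ones, so once the relevant MC elements are shown to be homotopic, each statement is a one-line application of one of the two halves of Proposition \ref{hom1}. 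The only thing to do, therefore, is to upgrade the hypothesis ``$\xi$ and $\eta$ are homotopic maps'' to ``$\xi$ and $\eta$ are homotopic elements of the appropriate MC set'', and the sole place where this is not immediate is the Harrison side.

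For part (1) this upgrade is essentially tautological. Under the representability isomorphism $\MC(U,\hat S\Sigma^{-1}V^*)\cong\Hom_{\falg}(\hat S\Sigma^{-1}U^*,\hat S\Sigma^{-1}V^*)$, an $L_\infty$ map $V\to U$ \emph{is} its defining MC element, and a Sullivan homotopy between two such maps -- a continuous cdga map $\hat S\Sigma^{-1}U^*\to(\hat S\Sigma^{-1}V^*)[z,dz]$ restricting to $\xi$ and $\eta$ at the endpoints -- is precisely an MC element of $U$ with coefficients in $(\hat S\Sigma^{-1}V^*)[z,dz]$ witnessing a homotopy $\xi\sim\eta$ in $\MC(U,\hat S\Sigma^{-1}V^*)$. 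Applying Proposition \ref{hom1} with coefficient cdga $\hat S\Sigma^{-1}V^*$ and $L_\infty$ algebra $U$ then yields $C^\xi_{\CE}(V,U)\cong C^\eta_{\CE}(V,U)$ from its unital half and $\overline C^\xi_{\CE}(V,U)\cong\overline C^\eta_{\CE}(V,U)$ from its non-unital half.

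For part (2) the point requiring care is that the correspondence between a map $f\colon A\to B$ of formal cdgas and its MC element $\xi_f\in\MC(\L(A),B)$ is not the identity but is mediated by $\L$: under $\MC(\L(A),B)\cong\Hom_{\falg}(\C\L(A),B)$ the element $\xi_f$ is the composite $\C\L(A)\xrightarrow{q(A)}A\xrightarrow{f}B$ (equivalently, $\L(f)\colon\L(B)\to\L(A)$ under $\MC(\L(A),B)\cong\Hom_{\L ie}(\L(B),\L(A))$, the two descriptions agreeing by naturality of the counit $q$). Consequently, if $h\colon A\to B[z,dz]$ is a Sullivan homotopy from $f$ to $g$, then $h\circ q(A)\colon\C\L(A)\to B[z,dz]$ is a Sullivan homotopy between $\xi_f$ and $\xi_g$, that is, a homotopy of MC elements in $\MC(\L(A),B)$; this is the same manipulation (going through $\L_{\ground[z,dz]}$) as in the proof of Theorem \ref{path}, and it needs no cofibrancy assumption on $A$ because it is routed through the cofibrant $\C\L(A)$. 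Feeding this into Proposition \ref{hom1}, now with the formal cdga $B$ and the strict dgla $\L(A)$ playing the roles of $A$ and $V$, gives $C^\xi_{\Harr}(A,B)\cong C^\eta_{\Harr}(A,B)$ and $\overline C^\xi_{\Harr}(A,B)\cong\overline C^\eta_{\Harr}(A,B)$. This identification of a Sullivan homotopy of cdga maps with a homotopy of the associated Harrison MC elements is the expected main obstacle, but it is pure bookkeeping with the adjunction $(\C,\L)$ and the counit $q$, with no new analytic or combinatorial input.
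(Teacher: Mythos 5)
Your proposal is correct and follows essentially the same route as the paper's own proof: translate the homotopy of maps into a homotopy of the corresponding MC elements (in $\MC(U,\hat{S}\Sigma^{-1}V^*)$ resp. $\MC(\L(A),B)$) and then invoke Proposition \ref{hom1}, with the unital and non-unital halves handling the full and truncated complexes. The extra bookkeeping you supply for the Harrison case via the counit $q(A)$ is exactly the step the paper leaves implicit ("noting that these elements are homotopic"), so there is no substantive difference.
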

\begin{proof}
We restrict ourselves with proving the corresponding statements for the untruncated complexes; the proofs  for the truncated analogues are completely parallel. For (1) consider the MC elements $\xi,\eta\in\MC(U,\hat{S}\Sigma^{-1} V^*)$ associated with the corresponding $L_\infty$ maps. These MC elements are homotopic and thus, by Proposition \ref{hom1} the $L_\infty$ algebras $(\hat{S}\Sigma^{-1} V^*\otimes U)^\xi$ and $(\hat{S}\Sigma^{-1} V^*\otimes U)^\eta$ are $L_\infty$ isomorphic as required. For (2) viewing $\xi$ and $\eta$ as elements in $\MC(\L(A),B)$ and noting that these elements are homotopic, we conclude similarly that the dglas $[B\otimes\L(B)]^\xi$ and $[B\otimes\L(B)]^\eta$ are $L_\infty$ isomorphic.
\end{proof}
The following result is a direct consequence of Proposition \ref{hom2}:
\begin{prop}\
\begin{enumerate}
\item
Let $(V,m_V)$, $(V^\prime, m_{V^\prime})$, $(U,m_U)$ and $(U^\prime,m_U^\prime)$ be $L_\infty$ algebras, $(V,m_V)\to(V^\prime,m_{V^\prime})$ and $(U^\prime,m_{U^\prime})\to (U,m_U)$ be $L_\infty$ quasi-isomorphisms and $(V^\prime,m_{V^\prime})\to(U^\prime,m_U^\prime)$ be an $L_\infty$ map. Then there are natural $L_\infty$ quasi-isomorphisms
\[
C_{\CE}(V,U)\to C_{\CE}(V^\prime,U^\prime);~ \overline{C}_{\CE}(V,U)\to \overline{C}_{\CE}(V^\prime,U^\prime)
\]
where $C_{\CE}(V,U)$ is formed using the composition $L_\infty$ map $(V,m_V)\to(V^\prime,m_{V^\prime})\to (U^\prime,m_U^\prime)\to(U,m_{U})$.
\item
Let $A,A^\prime, B$ and $B^\prime$ be formal cdgas, $A\to A^\prime$ and $B^\prime\to B$ be weak equivalences and $A^\prime \to B^\prime$ be a map of formal cdgas. Then there are natural $L_\infty$ quasi-isomorphisms
\[
C_{\Harr}(A,B)\to C_{\Harr}(A^\prime,B);~\overline{C}_{\Harr}(A,B)\to \overline{C}_{\Harr}(A^\prime,B).
\]
where $C_{\Harr}(A,B)$ is formed using the composition $A\to A^\prime\to B^\prime\to B$.
\end{enumerate}
\end{prop}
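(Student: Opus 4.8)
The plan is to deduce the proposition from Proposition \ref{hom2} by factoring each comparison map into a change of the `source' variable followed by a change of the `target' variable, and applying parts (2) and (1) of Proposition \ref{hom2} in turn. First I would unwind the definitions: $C_{\CE}(V,U)=(\hat S\Sigma^{-1}V^*\otimes U)^\xi$ with $\xi\in\MC(U,\hat S\Sigma^{-1}V^*)$ the MC element of the composite $L_\infty$ map $V\to V'\to U'\to U$, and $\overline{C}_{\CE}(V,U)=([\hat S\Sigma^{-1}V^*]_+\otimes U)^\xi$; likewise $C_{\Harr}(A,B)=(\tilde B\otimes\L(A))^\xi$ and $\overline{C}_{\Harr}(A,B)=(B\otimes\L(A))^\xi$ with $\xi\in\MC(\L(A),B)$ the MC element of the composite $A\to A'\to B'\to B$. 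Two elementary observations feed the argument: by Proposition \ref{char} an $L_\infty$ quasi-isomorphism $V\to V'$ is represented by a weak equivalence of formal cdgas $\hat S\Sigma^{-1}(V')^*\to\hat S\Sigma^{-1}V^*$ (which restricts to a weak equivalence of augmentation ideals), and by the very definition of a weak equivalence in $\falg$, a weak equivalence $a:A\to A'$ produces a quasi-isomorphism of dglas $\L(a):\L(A')\to\L(A)$, in particular an $L_\infty$ quasi-isomorphism.

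For part (1), I would let $g:\hat S\Sigma^{-1}(V')^*\to\hat S\Sigma^{-1}V^*$ represent the $L_\infty$ quasi-isomorphism $V\to V'$ and let $\xi'\in\MC(U',\hat S\Sigma^{-1}(V')^*)$ be the MC element of the given map $V'\to U'$, so that $C_{\CE}(V',U')=(\hat S\Sigma^{-1}(V')^*\otimes U')^{\xi'}$. Proposition \ref{hom2}(2) applied to $g$, with the fixed $L_\infty$ algebra $U'$, then supplies a natural $L_\infty$ quasi-isomorphism $C_{\CE}(V',U')\to(\hat S\Sigma^{-1}V^*\otimes U')^{g_*\xi'}$, and unwinding the representability isomorphism $\MC(U',-)\cong\Hom_{\falg}(\hat S\Sigma^{-1}(U')^*,-)$ identifies $\zeta:=g_*\xi'$ with the MC element of the composite $V\to V'\to U'$. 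Next, Proposition \ref{hom2}(1) applied to the given $L_\infty$ quasi-isomorphism $h:U'\to U$, with coefficient cdga $\hat S\Sigma^{-1}V^*$ and MC element $\zeta$, supplies a natural $L_\infty$ quasi-isomorphism $(\hat S\Sigma^{-1}V^*\otimes U')^{\zeta}\to(\hat S\Sigma^{-1}V^*\otimes U)^{h_*\zeta}$, and a further unwinding identifies $h_*\zeta$ with the MC element of the full composite, so that the target is $C_{\CE}(V,U)$. Composing the two maps and then passing to an $L_\infty$ homotopy inverse (which exists because the composite is an $L_\infty$ quasi-isomorphism) yields the asserted natural map $C_{\CE}(V,U)\to C_{\CE}(V',U')$. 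The truncated statement is proved verbatim, using the non-unital morphisms of Proposition \ref{hom2} in place of the unital ones and the weak equivalence of augmentation ideals in place of $g$.

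Part (2) is more direct: with $\xi'\in\MC(\L(A'),B)$ the MC element of $A'\to B'\to B$, so that $C_{\Harr}(A',B)=(\tilde B\otimes\L(A'))^{\xi'}$, I would apply Proposition \ref{hom2}(1) to the $L_\infty$ quasi-isomorphism $\L(a):\L(A')\to\L(A)$ with coefficient cdga $\tilde B$; via the representability isomorphism $\MC(\L(A),B)\cong\Hom_{\L ie}(\L(B),\L(A))$ one checks that $\L(a)_*\xi'$ is the MC element of $A\to A'\to B'\to B$, so the natural $L_\infty$ quasi-isomorphism produced is $C_{\Harr}(A',B)\to C_{\Harr}(A,B)$, which is then inverted; the truncated case uses the non-unital version of Proposition \ref{hom2}(1). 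Naturality of all the comparison maps is inherited from Proposition \ref{hom2}. I expect the only genuine work to be the bookkeeping of which MC element is carried to which by the induced maps $f_*$ and $g_*$ — a routine but slightly fiddly chase through the representability of $\MC$ and the adjunction of Theorem \ref{Hinich} — together with keeping track of the fact that each step yields an $L_\infty$ quasi-isomorphism pointing opposite to the direction asserted, so that homotopy inverses must be invoked at the end.
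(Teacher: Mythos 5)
Your proposal is correct and is essentially the paper's own argument: the paper disposes of this proposition by citing Proposition \ref{hom2}, and your two-step factorization (change of the coefficient formal cdga via Proposition \ref{hom2}(2), then change of the $L_\infty$/dgla variable via Proposition \ref{hom2}(1), plus the routine identification of the transported MC elements with the composites through the representability of $\MC$) is exactly what that citation amounts to, with part (2) indeed needing only the single application to $\L(a)\colon\L(A')\to\L(A)$. Your remark about orientation is accurate -- the maps Proposition \ref{hom2} supplies literally run $C_{\CE}(V',U')\to C_{\CE}(V,U)$ and $C_{\Harr}(A',B)\to C_{\Harr}(A,B)$ -- so one either reads the asserted arrows loosely or inverts up to homotopy as you do (legitimate, since $L_\infty$ quasi-isomorphisms admit homotopy inverses, e.g.\ by Corollary \ref{hin}); the only caveat is that after inverting, naturality holds only up to homotopy, an infelicity already present in the paper's formulation rather than a gap in your argument.
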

\noproof
\begin{rem}
As an aside we mention that there is an analogue of the dgla structure on $C_{\Harr}$ or $C_{\CE}$ in the context of the \emph{Hochschild complex} of an $A_\infty$ algebra; for a detailed discussion of the latter concept see \cite{Keller} or \cite{HL}. Namely, suppose that $(V,m_V), (U,m_U)$ are two $A_\infty$ algebras with representing formal dgas $\hat{T}\Sigma^{-1} V^*$ and $\hat{T}\Sigma^{-1} U^*$ respectively; suppose further that $\xi:\hat{T}\Sigma^{-1} U^*\to \hat{T}\Sigma^{-1} V^*$
is a map representing an $A_\infty$ morphism $V\to U$. The latter morphism corresponds to an $A_\infty$ MC element $\xi\in\MC(U,\hat{T}\Sigma^{-1} V^*)$, cf. \cite{CL} concerning $A_\infty$ MC elements and associated twistings. Then $C_{\operatorname {Hoch}}(V,U)$, the Hochschild complex of $V$ with coefficients in $U$ can be naturally identified with $(\hat{T}\Sigma^{-1} V^*\otimes U)^\xi$. We conclude that $C_{\operatorname {Hoch}}(V,U)$ has the structure of an $A_\infty$ algebra. This structure has been studied in some detail in the case when $V=U$ and $\xi$ is the identity morphism, \cite{getjon} cf. in which case it is homotopy abelian as part of a richer structure of a $G_\infty$ algebra. In general this $A_\infty$ structure may not be homotopy abelian.
\end{rem}
\section{Rational homotopy of function spaces}
In this section we will use the developed technology of MC twistings to construct explicit rational models for function spaces. Let $X$ and $Y$ be two connected nilpotent rational $CW$ complexes of finite type. Additionally, we assume that either $X$ is a finite CW complex or $Y$ has a finite Postnikov tower; this condition ensures that the spaces of maps between  $X$ and $Y$ are homotopically equivalent to finite type complexes. We remark that the latter condition is not necessary, at least in the philosophical sense; we impose it only because the current state of rational homotopy theory does not provide a Lie-Quillen model for a nilpotent space that is simultaneously not simply-connected and not of finite type. We believe that such a model should exist.  Denote by $F(X,Y)$ ($F_*(X,Y)$) the spaces of continuous maps (based continuous maps) between $X$ and $Y$. We denote by $A(X)$ a Sullivan minimal model of $X$, as described in \cite{BG} and by $L(Y)=\L(A(X)_+)$ its Lie-Quillen model of $Y$, \cite{N}. Note that $A(X)_+$ can be viewed as a formal cdga since it is a symmetric algebra on finitely many generators in positive degrees.  Then we have the following result.
\begin{theorem}\label{main}\
\begin{enumerate}
\item
\begin{enumerate}
\item
There is a bijection $\pi_0F(X,Y)\cong \widetilde{\MCmod}(L(Y),A(X)_+)$.
\item
There is a bijection $\pi_0F_*(X,Y)\cong \MCmod(L(Y),A(X)_+)$.
\end{enumerate}
\item Let $\xi:X\to Y$ denote both a base point in $F(X,Y)$ and a representative of the corresponding element in $\MC(L(Y),A(X)_+)$.
Further, denote by $F^\xi(X,Y)$ and $F^\xi_*(X,Y)$ the connected component of $F(X,Y)$ and of $F_*(X,Y)$ containing $\xi$.
\begin{enumerate}
\item
The dgla
$[A(X)\otimes L(Y)]^\xi\langle 0\rangle$ is a Lie-Quillen model of $F^\xi(X,Y)$.
\item
The dgla
$[A(X)_+\otimes L(Y)]^\xi\langle 0\rangle$ is a Lie-Quillen model of $F^\xi_*(X,Y)$.
\end{enumerate}
\end{enumerate}
\end{theorem}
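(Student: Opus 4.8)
\emph{Plan.} The plan is to derive both parts from the homotopy-invariance results of Sections~5--7 together with the standard dictionary between rational spaces and their Sullivan and Quillen models. Throughout I write $\g:=A(X)_+\otimes L(Y)$ and $\tilde\g:=A(X)\otimes L(Y)$, where $L(Y)=\L(A(Y)_+)$ is pronilpotent by Neisendorfer's description \cite{N}; the hypothesis that $X$ is finite or $Y$ has a finite Postnikov tower makes $\g,\tilde\g$ of finite type and the relevant mapping spaces of finite type, and since $Y$ is rational so are $F(X,Y)$ and $F_*(X,Y)$, so these spaces may be identified with their rationalizations. For Part~(1) I would combine representability, $\MC(L(Y),A)\cong\Hom_{\falg}(\C(L(Y)),A)$, with the fact that the adjunction $q\colon\C(L(Y))=\C\L(A(Y)_+)\to A(Y)_+$ is a weak equivalence of \emph{cofibrant} formal cdgas (Theorem~\ref{Hinich}, Proposition~\ref{char}, Remark~\ref{uncompleted}); by Theorem~\ref{formdef} (equivalently Corollary~\ref{hin}(2)) this gives $\MCmod(L(Y),A(X)_+)\cong[A(Y)_+,A(X)_+]_*$, and by Proposition~\ref{red} likewise $\widetilde{\MCmod}(L(Y),A(X)_+)\cong[A(Y)_+,A(X)_+]$ for free homotopy classes. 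It then remains to quote the Bousfield--Gugenheim correspondence \cite{BG}: for connected nilpotent finite-type $X,Y$, pointed (resp.\ free) homotopy classes of maps $X\to Y$ are in natural bijection with pointed (resp.\ free) Sullivan homotopy classes of cdga maps $A(Y)\to A(X)$, i.e.\ with $[A(Y)_+,A(X)_+]_*$ (resp.\ $[A(Y)_+,A(X)_+]$). This proves 1(a) and 1(b).

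\emph{Part (2), reduction to a Maurer--Cartan simplicial set.} Next I would introduce the simplicial set with $n$-simplices $\MC(L(Y),A(X)_+\otimes\Omega_n)$, where $\Omega_n$ is the algebra of polynomial differential forms on $\Delta^n$; the defining series converges since $L(Y)$ is pronilpotent, and $\MC(L(Y),R)\cong\Hom_{\mathrm{cdga}}(\C(L(Y)),R)$ for any, not necessarily formal, cdga $R$, with the same harmless imprecision already present in the definition of Sullivan homotopy. Since $A(X)_+\otimes\Omega_\bullet$ is a cosimplicial resolution of $A(X)_+$ and $\C(L(Y))$, $A(Y)_+$ are weakly equivalent cofibrant objects, this simplicial set is weakly equivalent to $[n\mapsto\Hom_{\mathrm{cdga}}(A(Y)_+,A(X)_+\otimes\Omega_n)]$, which by the function-space refinement of the Bousfield--Gugenheim theorem (Haefliger \cite{Hae}) is a model for $F_*(X,Y)$; its component at $\xi$ models $F_*^\xi(X,Y)$. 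Specializing to $\Omega_1=\ground[z,dz]$ recovers $\pi_0$ as $\MCmod(L(Y),A(X)_+)$, consistently with 1(b). The unbased version is identical, using $\tilde\g$ and unital cdga maps, and models $F^\xi(X,Y)$.

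\emph{Part (2), identification of the model.} Twisting translates the basepoint to the origin: for a dgla the assignment $\eta\mapsto\xi+\eta$ is a natural bijection $\MC(\g^\xi,R)\cong\MC(\g,R)$ (because $d^\xi$ absorbs the terms linear in $\xi$), so the corresponding simplicial sets are isomorphic carrying $0$ to $\xi$, and hence $F_*^\xi(X,Y)$ is modelled by the component of $0$ in the Maurer--Cartan simplicial set of $\g^\xi$. By Proposition~\ref{conn} the passage to the connected cover does not change this component, so it suffices to note that $\h\langle 0\rangle$ is a Lie--Quillen model of the $0$-component of the Maurer--Cartan simplicial set of any pronilpotent finite-type dgla $\h$ — this is exactly the Quillen--Neisendorfer dictionary recalled in Section~4 (the homology of $\h\langle 0\rangle$ computes the rational homotopy groups, with the bracket realizing the Whitehead product, and $\C(\h\langle 0\rangle)$ is the corresponding Sullivan model). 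Unravelling the definitions, $\g^\xi\langle 0\rangle=[A(X)_+\otimes L(Y)]^\xi\langle 0\rangle=\overline{C}_{\Harr}(A(Y)_+,A(X)_+)^\xi\langle 0\rangle$ and $\tilde\g^\xi\langle 0\rangle=[A(X)\otimes L(Y)]^\xi\langle 0\rangle=C_{\Harr}(A(Y)_+,A(X)_+)^\xi\langle 0\rangle$, which yields 2(b) and 2(a). Alternatively, 2(a) can be deduced from 2(b) by matching the evaluation fibration $F_*^\xi(X,Y)\to F^\xi(X,Y)\to Y$ with the short exact sequence of dglas $\overline{C}_{\Harr}(A(Y)_+,A(X)_+)^\xi\to C_{\Harr}(A(Y)_+,A(X)_+)^\xi\to\L(A(Y)_+)$.

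\emph{Main obstacle.} The genuinely delicate step is the reduction in the second paragraph: passing from the strict $\Hom$-simplicial set to the derived mapping space, and identifying the latter with $F_*(X,Y)$, requires the cosimplicial-resolution argument, the vanishing of $\lim^1$ on formal vector spaces, convergence of the Maurer--Cartan series over the non-formal coefficient algebras $\Omega_n$, and --- crucially --- the finiteness hypothesis on $X$ or on the Postnikov tower of $Y$, which is precisely what keeps the components of the Maurer--Cartan simplicial set nilpotent and of finite type, so that a Lie--Quillen model is even defined. Once this bridge is in place, Propositions~\ref{hom1}, \ref{hom2} and~\ref{conn} make the rest essentially bookkeeping.
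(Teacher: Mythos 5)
Your part (1) is essentially the paper's own argument (Sullivan/Bousfield--Gugenheim on $\pi_0$ plus representability and the invariance results of Section 5), but for part (2) you take a genuinely different route: you realize $F_*(X,Y)$ as the simplicial set $n\mapsto\MC(L(Y),A(X)_+\otimes\Omega_n)$ and untwist, i.e.\ the Haefliger/Getzler/Berglund approach. The paper never builds a simplicial mapping space at all: it tests the candidate dgla against every connected nilpotent rational finite-type $Z$, proving the natural bijection $[Z,F^\xi(X,Y)]_*\cong[L(Z),[A(X)\otimes L(Y)]^\xi\langle 0\rangle]$ by identifying both sides with the fibre over $\xi$ of $\widetilde{\MCmod}(L(Y),A(Z)\otimes A(X))\to\widetilde{\MCmod}(L(Y),A(Z))$ --- using the exact sequence relating $\MCmod(\,\cdot\,,A(Z)_+)$ to $\widetilde{\MCmod}(\,\cdot\,,A(Z))$, the untwisting bijection $\eta\mapsto\eta+\xi\otimes 1$, and Proposition \ref{conn} --- and then invoking Yoneda; 2(b) is afterwards deduced from 2(a) via the evaluation fibration. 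This requires only $\pi_0$-level Sullivan theory for the products $Z\times X$, which is why the paper can stay inside its own formalism.

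The genuine gap in your version is the step you describe as bookkeeping: the claim that ``$\h\langle 0\rangle$ is a Lie--Quillen model of the $0$-component of the Maurer--Cartan simplicial set of any pronilpotent finite-type dgla $\h$'' is \emph{not} the Quillen--Neisendorfer dictionary of Section 4. That dictionary (and \cite{N}) concerns non-negatively graded dglas and connected formal cdgas, whereas $\g^\xi=[A(X)_+\otimes L(Y)]^\xi$ has components in negative degrees as soon as $X$ is not a point; for such unbounded dglas the statement is a substantial theorem --- essentially the main result of \cite{Ber}, i.e.\ the very thing the paper is reproving by other means --- so as written you assume what is to be proven. Two subsidiary points feed into the same gap: Proposition \ref{conn} cannot be applied at the simplicial level to compare $\MC(\g^\xi\langle 0\rangle,\Omega_\bullet)$ with $\MC(\g^\xi,\Omega_\bullet)$, since $\Omega_n$ is neither connected nor formal (and the paper explicitly warns that MC invariance may fail over non-formal coefficients), and the identification of the strict simplicial $\Hom$ with $F_*(X,Y)$ rests on the Haefliger/Brown--Szczarba function-space theorem, which is external to the paper rather than a refinement of what Section 4 provides. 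Finally, your ``alternative'' derivation of 2(a) from 2(b) runs the fibration argument in the wrong direction: Lie models of the fibre $F_*^\xi(X,Y)$ and of the base $Y$ do not determine the total space; the paper's deduction goes from the total space to the fibre, which is the legitimate direction.
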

\begin{proof}
We have a natural bijection
\[
\pi_0F(X,Y)\cong [A(Y),A(X)]\cong \widetilde{\MCmod}(L(Y),A(X)_+).
\]
Similarly,
\[
\pi_0F_*(X,Y)\cong [A(Y),A(X)]_*\cong{\MCmod}(L(Y),A(X)_+)
\]
which proves (1).
Let us now prove (2) starting with part (a). Let $Z$ be any nilpotent rational CW complex $Z$ of finite type. We will prove the following natural bijection of sets:
\begin{equation}\label{adj}
[Z,F^\xi(X,Y)]_*\cong [L(Z),[{A(X)\otimes L(Y)]^\xi}\langle 0\rangle].
\end{equation}
Here the left hand side of (\ref{adj}) is the set of pointed homotopy classes of maps of spaces whereas the right hand side is  the set of homotopy classes of dgla maps. By the Yoneda lemma this would imply the desired statement. Note first that $[Z,F^\xi(X,Y)]_*$ can be identified with the fiber over $\xi\in[X,Y]$ of the map (induced by the inclusion of the base point into $Z$):
\[
[Z\times X,Y]\to [X,Y].
\]
By part (1) this is the same as the fiber over $\xi\in\widetilde{\MCmod}(L(Y),A(X))$ of the map
\[\widetilde{\MCmod}(L(Y), A(Z)\otimes A(X))\to \widetilde{\MCmod}(L(Y), A(X)).\]
We have:
\begin{align}
\nonumber[L(Z),[A(X)\otimes L(Y)]^\xi\langle 0\rangle]&\cong [\L A(Z)_+,[A(X)\otimes L(Y)]^\xi\langle 0\rangle]\\
\nonumber&\cong\MCmod([A(X)\otimes L(Y)]^\xi\langle 0\rangle, A(Z)_+)\\
&\label{fib0}\cong\MCmod(A(X)\otimes L(Y)^\xi, A(Z)_+)
\end{align}
where in the last equality we have used Proposition \ref{conn}.

Next consider the following sequence of maps of sets:
\begin{equation}\label{fib1}
\MCmod(A(X)\otimes L(Y)^\xi, A_+(Z))\to \widetilde{\MCmod}(A(X)\otimes L(Y)^\xi, A(Z))\to \widetilde{\MCmod}(L(Y), A(Z))
\end{equation}
where the first map is induced by the natural inclusion $A_+(Z)\to A(Z)$ and the second -- by the projection $A(Z)\to \mathbb Q$. This sequence is exact in the sense that second map is onto whereas the first term  is its fiber over $\xi\in\widetilde{\MCmod}(L(Y), A(X))$. Next we have natural bijections
\begin{equation}\label{fib2}
\widetilde{\MCmod}(A(X)\otimes L(Y)^\xi, A(Z))\cong \widetilde{\MCmod}(A(X)\otimes L(Y), A(Z))\cong \widetilde{\MCmod}(L(Y), A(Z)\otimes A(X)).
\end{equation}
Here the first bijection is given for $\eta\in A(X)\otimes L(Y)\otimes A(Z)$ by $\eta\mapsto \eta+\xi\otimes 1$ and the second one is obvious.

From equations (\ref{fib0}), (\ref{fib1}) and (\ref{fib2}) we conclude that the set $[L(Z),[A(X)\otimes L(Y)]^\xi\langle 0\rangle]$ is bijective with the fiber of \[\widetilde{\MCmod}(L(Y), A(Z)\otimes A(X))\to \widetilde{\MCmod}(L(Y), A(Z))\] over $\xi\in\widetilde{\MCmod}(L(Y), A(Z))$ and so the bijection (\ref{adj}) is proved.

The proof of the based version (b) follows from the unbased version (a). Note, first of all, that there is a homotopy fiber sequence of spaces
\[
F_*^\xi(X,Y)\to F^\xi(X,Y)\to Y
\]
where the last map is induced by the inclusion of the base point into $X$. Next, under the functor $?\mapsto L(?)$ the map $F^\xi(X,Y)\to $ corresponds to
\[
[A(X)\otimes L(Y)]^\xi\langle 0\rangle\to L(Y)
\]
induced by the augmentation $A(X)\to \mathbb Q$. Clearly the homotopy fiber of the latter map (which coincides in this case with the actual fiber) is isomorphic to $[A(X)\otimes L(Y)]^\xi\langle 0\rangle$.
\end{proof}
\begin{rem}
Note the manifest homotopy invariant nature of Theorem \ref{main}: the Quillen-Lie models of $F(X,Y)$ and $F_*(X,Y)$ do not depend (up to quasi-isomorphism) on the choice of models $L(Y)$ and $A(X)$ and also on the choice of the MC element $\xi$ within its homotopy class.
\end{rem}
Note that there is a weak equivalence between formal cdgas:
\[A(X)_+\cong \C(L(X)).\]
By Proposition \ref{hom2} it follows that there are $L_\infty$ quasi-isomorphisms
\[
(A(X)_+\otimes L(Y))^\xi\cong (\C(L(X))\otimes L(Y))^{\tilde{\xi}};
\]
here $\tilde{\xi}$ is the MC element in $(\C(L(X))\otimes L(Y))$
corresponding to a given map $X\to Y$. By definition
$(\C(L(X))\otimes L(Y))^{\tilde{\xi}}\cong \overline{C}_{\CE}(L(X),L(Y))$.
Similarly we obtain the following natural isomorphism of dglas:
\[
(A(X)\otimes L(Y))^\xi\cong {C}_{\CE}(L(X),L(Y)).
\]
This gives an interpretation of the Quillen-Lie algebras of $F(X,Y)$ and $F_*(X,Y)$ in terms of Chevalley-Eilenberg complexes. By Proposition
\ref{CEH} we can further interpret them in terms of Harrison complexes. Putting everything together we obtain the following result.
\begin{cor}\ 
\begin{enumerate}
\item
The dglas $C_{\Harr}(A(Y),A(X))\langle 0\rangle$ and $C_{\CE}(L(X),L(Y))\langle 0 rangle$ are Lie-Quillen models for a connected component of $F(X,Y)$.
\item
The dglas $\overline{C}_{\Harr}(A(Y),A(X))\langle 0\rangle$ and $\overline{C}_{\CE}(L(X),L(Y))\langle 0\rangle$ are Lie-Quillen models for a connected component of $F_*(X,Y)$.
\end{enumerate}
\end{cor}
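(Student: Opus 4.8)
The plan is to derive the Corollary formally from Theorem~\ref{main}(2) together with the chain of $L_\infty$ quasi-isomorphisms indicated in the paragraph immediately preceding the statement; essentially no new computation is needed, only careful bookkeeping. By Theorem~\ref{main}(2) the dglas $[A(X)\otimes L(Y)]^\xi\langle 0\rangle$ and $[A(X)_+\otimes L(Y)]^\xi\langle 0\rangle$ are Lie-Quillen models of $F^\xi(X,Y)$ and $F^\xi_*(X,Y)$, so it suffices to connect each of these to the four Harrison and Chevalley-Eilenberg complexes in the statement by a zig-zag of dgla quasi-isomorphisms compatible with the operation $\langle 0\rangle$.

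First I would set up the weak equivalence $A(X)_+\simeq\C(L(X))$: since $L(X)=\L(A(X)_+)$, Theorem~\ref{Hinich} furnishes the adjunction counit $q=q(A(X)_+):\C\L(A(X)_+)=\C(L(X))\to A(X)_+$, which is a weak equivalence of formal cdgas, and adjoining units gives a weak equivalence $\widetilde{\C(L(X))}\to A(X)$ of augmented formal cdgas. Applying Proposition~\ref{hom2}(2) to $q$ (with $V=L(Y)$) produces an $L_\infty$ quasi-isomorphism $(\C(L(X))\otimes L(Y))^{\tilde\xi}\to(A(X)_+\otimes L(Y))^{\xi}$, where $\tilde\xi$ is a representative of the MC class lifting $\xi$ along $q_*$, and its unital version gives $(\widetilde{\C(L(X))}\otimes L(Y))^{\tilde\xi}\to(A(X)\otimes L(Y))^{\xi}$. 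By Definition~\ref{graded} and the definition of the truncated complexes, the left-hand sides are exactly $C_{\CE}(L(X),L(Y))$ and $\overline{C}_{\CE}(L(X),L(Y))$ for the MC element corresponding to the chosen map $X\to Y$. Finally, Proposition~\ref{CEH}, applied with $V=L(X)$, $U=L(Y)$, $B=\C(L(X))\simeq A(X)_+$ and $A=\C(L(Y))\simeq A(Y)_+$ and the map $\C(L(Y))\to\C(L(X))$ representing $X\to Y$, yields $L_\infty$ quasi-isomorphisms $C_{\CE}(L(X),L(Y))\simeq C_{\Harr}(A(Y),A(X))$ and $\overline{C}_{\CE}(L(X),L(Y))\simeq\overline{C}_{\Harr}(A(Y),A(X))$.

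To conclude, I would pass to connected covers throughout. By the remark following the definition of the connected cover, $V\mapsto V\langle 0\rangle$ carries $L_\infty$ quasi-isomorphisms to $L_\infty$ quasi-isomorphisms, so applying $\langle 0\rangle$ to the zig-zags above relates $[A(X)\otimes L(Y)]^\xi\langle 0\rangle$, $C_{\CE}(L(X),L(Y))\langle 0\rangle$ and $C_{\Harr}(A(Y),A(X))\langle 0\rangle$ by $L_\infty$ quasi-isomorphisms of dglas (and similarly the truncated, based versions relate $[A(X)_+\otimes L(Y)]^\xi\langle 0\rangle$, $\overline{C}_{\CE}(L(X),L(Y))\langle 0\rangle$ and $\overline{C}_{\Harr}(A(Y),A(X))\langle 0\rangle$). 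Since an $L_\infty$ quasi-isomorphism of dglas is an isomorphism in the homotopy category of dglas, the property of being a Lie-Quillen model of $F^\xi(X,Y)$ (resp.\ $F^\xi_*(X,Y)$) is transported along these zig-zags, and the Corollary follows from Theorem~\ref{main}(2).

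The step I expect to require the most care is the compatibility of the various Maurer-Cartan elements: one must verify that the element $\tilde\xi$ lifting $\xi$ along $q_*$, the canonical MC element built into Definition~\ref{graded}, and the element transported across the weak equivalence $\C(L(Y))\simeq A(Y)_+$ used in Proposition~\ref{CEH}, all represent the \emph{same} homotopy class of map $X\to Y$, so that the resulting identifications are indeed identifications of the complexes attached to the chosen base point $\xi$; once this is pinned down the rest is the routine bookkeeping already sketched before the statement. A secondary point worth a sentence is that $L(X)$ and $L(Y)$, being free Lie algebras on finite-type positively graded spaces, are themselves of finite type, so that $\C(L(X))$ and $\C(L(Y))$ are genuine formal cdgas and Proposition~\ref{CEH} is applicable.
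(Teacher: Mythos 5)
Your proposal is correct and follows essentially the same route as the paper: the paper's own (implicit) proof is exactly the paragraph preceding the statement, which uses the weak equivalence $A(X)_+\simeq\C(L(X))$ together with Proposition \ref{hom2} to identify the twisted dglas of Theorem \ref{main}(2) with the Chevalley-Eilenberg complexes, and then Proposition \ref{CEH} (plus the homotopy invariance statements of Section 7) to pass to the Harrison complexes, finishing by taking connected covers. Your extra remarks on matching the Maurer-Cartan representatives and on finiteness are just the bookkeeping the paper leaves tacit.
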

\noproof
One further consequence is that the homotopy groups of function spaces can be expressed in terms of Harrison or Chevalley-Eilenberg cohomology; this was already known \cite{BL}:
\begin{cor} We have the following isomorphisms for $i>0$:
\begin{enumerate}
\item
\[
\pi_nF(X,Y)\cong H_{\CE}^{1-n}(L(X),L(Y))\cong H_{\Harr}^{1-n}(A(Y),A(X)).
\]
\item
\[
\pi_nF(X,Y)\cong \overline{H}_{\CE}^{1-n}(L(X),L(Y))\cong \overline{H}_{\Harr}^{1-n}(A(Y),A(X)).
\]
\end{enumerate}
\end{cor}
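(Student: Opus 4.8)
The plan is to read this off from the preceding corollary by feeding its Lie--Quillen models into the basic dictionary of rational homotopy theory. Recall Quillen's theorem, in the nilpotent finite-type form used by Neisendorfer: if $\g$ is a Lie--Quillen model of a connected nilpotent rational space $W$ of finite type, then there is a natural isomorphism $\pi_n(W)\cong H_{n-1}(\g)$ for all $n>0$. Under the standing hypotheses on $X$ and $Y$ (finiteness of $X$, or of the Postnikov tower of $Y$) every component of $F(X,Y)$ and of $F_*(X,Y)$ is connected nilpotent of finite type, so this theorem applies to the models produced in the previous corollary.

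First I would fix a basepoint component $\xi$ and note that $\pi_nF(X,Y)=\pi_nF^\xi(X,Y)$ and $\pi_nF_*(X,Y)=\pi_nF^\xi_*(X,Y)$ for every $n>0$, so it suffices to compute the homotopy of a single component. By the preceding corollary, $C_{\CE}(L(X),L(Y))\langle 0\rangle$ and $C_{\Harr}(A(Y),A(X))\langle 0\rangle$ are Lie--Quillen models of $F^\xi(X,Y)$, hence quasi-isomorphic to one another (cf.\ Proposition \ref{CEH}), so Quillen's theorem identifies $\pi_nF(X,Y)$ with $H_{n-1}$ of either complex, which already yields the second isomorphism of part (1). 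Passing to the connected cover is harmless here: the functor $\langle 0\rangle$ modifies a dgla only in non-positive degrees, so $H_{n-1}(\g\langle 0\rangle)=H_{n-1}(\g)$ for $n>0$. The last step is the translation between the homological grading intrinsic to these dglas and the cohomological grading under which the Chevalley--Eilenberg and Harrison complexes are customarily written, as flagged in the remark following Definition \ref{graded}: in that convention $H_k$ of the complex is by definition $H^{-k}$, so $H_{n-1}=H^{1-n}$, and one obtains $\pi_nF(X,Y)\cong H^{1-n}_{\CE}(L(X),L(Y))\cong H^{1-n}_{\Harr}(A(Y),A(X))$. Part (2) follows by the verbatim argument with the truncated complexes $\overline{C}_{\CE}(L(X),L(Y))\langle 0\rangle$ and $\overline{C}_{\Harr}(A(Y),A(X))\langle 0\rangle$, which by the preceding corollary are the Lie--Quillen models of the based function space component.

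The argument is essentially formal, so the only points that genuinely need to be checked rather than asserted are: that every component of $F(X,Y)$ and of $F_*(X,Y)$ is nilpotent and of finite type, so that Quillen's theorem is applicable in the stated form --- this is precisely what the finiteness hypothesis buys us; and the grading bookkeeping, where one must be scrupulous with the conventions $\Sigma V_i=V_{i-1}$ and $(\Sigma V)^*\cong\Sigma^{-1}V^*$ and with the degree shift in Quillen's theorem, so that the exponent comes out exactly as $1-n$ rather than $-n$ or $-1-n$. I would also remark in passing that these isomorphisms respect the natural graded Lie algebra structures on both sides --- the Whitehead bracket on homotopy, and the bracket induced on $H_{\CE}$ and $H_{\Harr}$ --- although only the underlying vector-space statement is claimed here.
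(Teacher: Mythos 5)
Your argument is correct and is exactly the intended derivation: the paper leaves this corollary without proof precisely because it follows immediately from the preceding corollary's Lie--Quillen models together with Quillen's identification $\pi_n(W)\otimes\mathbb{Q}\cong H_{n-1}(\g)$, the observation that the connected cover $\langle 0\rangle$ does not affect homology in degrees $n-1\geq 0$, and the homological-to-cohomological regrading $H_{n-1}=H^{1-n}$. You also correctly read part (2) as the statement for the based mapping space $F_*(X,Y)$ (the paper's restatement of $\pi_nF$ there is a typo), so nothing further is needed.
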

\noproof
Theorem \ref{main} can also be used to obtain Sullivan models of mapping spaces:
\begin{cor}\label{Hae}\
\begin{enumerate}
\item
The cdga $C_{\CE}([A(X)\otimes L(Y)]^\xi)\langle 0\rangle$ is a Sullivan model for $F(X,Y)$.
\item
The cdga $\overline{C}_{\CE}([A(X)\otimes L(Y)]^\xi\langle 0\rangle)$ is a Sullivan model for $F_*(X,Y)$.
\end{enumerate}
\end{cor}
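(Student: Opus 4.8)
The plan is to deduce both parts from Theorem \ref{main} by applying the Chevalley--Eilenberg cochain functor $\C$ to the Lie--Quillen models obtained there. The underlying principle is the following: if $W$ is a connected nilpotent space of finite type and $\g$ is any Lie--Quillen model of $W$, then $\C(\g)$ is a Sullivan model of $W$. This combines two facts already at our disposal. On one hand, by Theorem \ref{Hinich} the adjunction map $\L\C(\g)\to\g$ is a weak equivalence, so the Lie--Quillen model of $\C(\g)$ is quasi-isomorphic to $\g\simeq L(W)$ and hence $\C(\g)$ models $W$. On the other hand, $\g$ is non-negatively graded with $H(\g)$ nilpotent of finite type, so the Proposition of Section 4 (on the structure of $\C(\g)$ for such $\g$) gives an isomorphism in $\falg_+$ of $\C(\g)$ with $A_0\otimes B$, where $A_0$ is a finite-type Sullivan minimal cdga and $B$ a formal cdga representing a linear contractible $L_\infty$ algebra; in particular $\C(\g)$ is itself a Sullivan cdga, quasi-isomorphic to $A_0$. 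Hence $\C(\g)$ is a Sullivan model of $W$.

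For part (1), set $\g:=[A(X)\otimes L(Y)]^\xi\langle0\rangle$. By Theorem \ref{main}(2)(a) this is a Lie--Quillen model of $F^\xi(X,Y)$; since $A(X)$ and $L(Y)$ are of finite type, $\g$ is a non-negatively graded finite-type dgla, and $H(\g)$ is the rational homotopy Lie algebra of the nilpotent finite-type space $F^\xi(X,Y)$, so it is nilpotent of finite type. The principle above then gives that $\C(\g)$ is a Sullivan model of $F^\xi(X,Y)$. It remains to note that $C_{\CE}([A(X)\otimes L(Y)]^\xi)\langle0\rangle=\C(\g)$: the formal cdga $C_{\CE}([A(X)\otimes L(Y)]^\xi)$ represents the $L_\infty$ algebra $[A(X)\otimes L(Y)]^\xi$, and forming the connected cover of that $L_\infty$ algebra amounts to passing to the quotient formal cdga dual to the connected cover, so it is immaterial whether $\langle0\rangle$ is performed before or after applying $\C$ (cf.\ the proof of Proposition \ref{conn}).

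Part (2) proceeds in the same way with $\g$ replaced by $[A(X)_+\otimes L(Y)]^\xi\langle0\rangle$, which by Theorem \ref{main}(2)(b) is a Lie--Quillen model of $F^\xi_*(X,Y)$; hence $\C\big([A(X)_+\otimes L(Y)]^\xi\langle0\rangle\big)$ is a Sullivan model of $F^\xi_*(X,Y)$. It then remains to identify this cdga with $\overline{C}_{\CE}([A(X)\otimes L(Y)]^\xi\langle0\rangle)$. This is the Chevalley--Eilenberg counterpart of the identity $(A(X)_+\otimes L(Y))^\xi\cong\overline{C}_{\CE}(L(X),L(Y))$ recorded just before the corollary, and it follows from the splitting $A(X)=A(X)_+\oplus\ground$: the twisted $A(X)$-linear dgla $[A(X)\otimes L(Y)]^\xi$ is an extension of $L(Y)$ by the ideal $[A(X)_+\otimes L(Y)]^\xi$, split as an extension of graded Lie algebras, and dualizing this extension (after passing to connected covers) yields precisely the required identification. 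Equivalently, it is the effect of $\C$ on the homotopy fibration $F^\xi_*(X,Y)\to F^\xi(X,Y)\to Y$ used in the proof of Theorem \ref{main}.

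The single genuinely substantive step is the verification that the Proposition of Section 4 applies to $\g$ and to $[A(X)_+\otimes L(Y)]^\xi\langle0\rangle$: one needs the relevant connected components of $F(X,Y)$ and $F_*(X,Y)$ to be nilpotent spaces of finite type, so that their rational homotopy Lie algebras --- i.e.\ $H(\g)$ and its based counterpart --- are nilpotent graded Lie algebras of finite type, and $\C$ therefore carries the quasi-isomorphism class of $\g$ to the homology-equivalence class of an honest Sullivan cdga. This is exactly where the standing assumptions on $X$ and $Y$ are used: they guarantee that the function spaces have the homotopy type of finite-type complexes and place us inside the range of validity of Neisendorfer's closed model structure from Section 4. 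The remaining steps --- commuting $\langle0\rangle$ past $\C$ and matching the truncated complex $\overline{C}_{\CE}$ of the ambient twisted dgla with the based model --- are routine manipulations of the definitions.
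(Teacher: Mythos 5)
Your argument is correct and is exactly the route the paper intends: the corollary is stated there without proof, as an immediate consequence of Theorem \ref{main} combined with the $\C$--$\L$ correspondence of Sections 2--4 (Hinich's adjunction plus the Quillen--Neisendorfer statement that $\C$ of a non-negatively graded dgla with nilpotent finite-type homology is a genuine Sullivan cdga modelling the same space). Your additional remarks -- that $\langle 0\rangle$ commutes with $\C$ in the sense of Proposition \ref{conn}, and that the based case amounts to replacing $A(X)$ by $A(X)_+$ via the split sequence $[A(X)_+\otimes L(Y)]^\xi\to[A(X)\otimes L(Y)]^\xi\to L(Y)$ -- just make explicit the notational conventions the paper uses implicitly, so there is no substantive divergence.
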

\noproof
\begin{cor}
Let $X$ be a formal space (i.e. $A(X)$ is quasi-isomorphic to $H(X)$), $Y$ be a coformal space (i.e. $L(Y)$ is quasi-isomorphic to $\pi_{\mathbb Q}(Y)$, the Whitehead Lie algebra of $Y$) and $X\to Y$ be the constant map corresponding to $0\in\MC(L(Y),A(X))$.  Then the function spaces $F^0(X,Y)$ and $F_*^0(X,Y)$ are both coformal; their Whitehead Lie algebras are isomorphic respectively to $H^*(X)\otimes\pi_{\mathbb Q}(Y)$ and $H^*(X)_+\otimes\pi_{\mathbb Q}(Y)$.
\end{cor}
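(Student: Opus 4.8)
The plan is to feed the constant map into Theorem~\ref{main}(2) and then transport the formality of $X$ and the coformality of $Y$ across the tensor product. Since the basepoint $\xi$ is the constant map $X\to Y$, the corresponding Maurer--Cartan element is $0\in\MC(L(Y),A(X)_+)$, and twisting by $0$ is the identity operation ($e^{0}=\id$), so $[A(X)\otimes L(Y)]^{0}=A(X)\otimes L(Y)$ and $[A(X)_+\otimes L(Y)]^{0}=A(X)_+\otimes L(Y)$ with their \emph{untwisted} $A$-linear dgla structures. Thus, by Theorem~\ref{main}, the dglas $\big(A(X)\otimes L(Y)\big)\langle 0\rangle$ and $\big(A(X)_+\otimes L(Y)\big)\langle 0\rangle$ are Lie--Quillen models of $F^{0}(X,Y)$ and $F_*^{0}(X,Y)$ respectively.

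Next I would replace both tensor factors by their homology. Formality of $X$ is the statement that $A(X)$ is joined to $(H^*(X),0)$ by a zig-zag of (unital) cdga quasi-isomorphisms; passing to augmentation ideals gives such a zig-zag between $A(X)_+$ and $(H^*(X)_+,0)$ as well. Tensoring each stage of it with $\id_{L(Y)}$ --- and using the K\"unneth theorem over $\mathbb Q$, which applies without convergence issues because at least one of $H^*(X)$ and $\pi_{\mathbb Q}(Y)$ is finite-dimensional under our standing hypotheses and $A(X)_+$ is of finite type --- produces zig-zags of dgla quasi-isomorphisms $A(X)\otimes L(Y)\simeq H^*(X)\otimes L(Y)$ and $A(X)_+\otimes L(Y)\simeq H^*(X)_+\otimes L(Y)$. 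Coformality of $Y$, i.e. a zig-zag of dgla quasi-isomorphisms $L(Y)\simeq(\pi_{\mathbb Q}(Y),0)$, similarly yields $H^*(X)\otimes L(Y)\simeq\big(H^*(X)\otimes\pi_{\mathbb Q}(Y),0\big)$ after tensoring with $\id_{H^*(X)}$, and likewise with $H^*(X)_+$. Composing the two chains connects $A(X)\otimes L(Y)$ to $\big(H^*(X)\otimes\pi_{\mathbb Q}(Y),0\big)$ and $A(X)_+\otimes L(Y)$ to $\big(H^*(X)_+\otimes\pi_{\mathbb Q}(Y),0\big)$ by zig-zags of dgla quasi-isomorphisms, where the bracket on the target is the evident one assembled from the cup product and the Whitehead bracket.

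Finally I would apply the connected-cover functor. A dgla quasi-isomorphism $V\to W$ induces a quasi-isomorphism $V\langle 0\rangle\to W\langle 0\rangle$ (a direct computation, as in the observation preceding Proposition~\ref{conn}), and this survives zig-zags; so the Lie--Quillen models above are connected by zig-zags of dgla quasi-isomorphisms to $\big(H^*(X)\otimes\pi_{\mathbb Q}(Y)\big)\langle 0\rangle$ and $\big(H^*(X)_+\otimes\pi_{\mathbb Q}(Y)\big)\langle 0\rangle$, which carry the zero differential. By definition this means $F^{0}(X,Y)$ and $F_*^{0}(X,Y)$ are coformal, with Whitehead Lie algebras equal to the homology of their Lie--Quillen models, namely the two displayed connected covers; since only non-negative homological degrees of the Whitehead Lie algebra are visible on a space, these are what the statement abbreviates as $H^*(X)\otimes\pi_{\mathbb Q}(Y)$ and $H^*(X)_+\otimes\pi_{\mathbb Q}(Y)$.

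The step requiring genuine care is the compatibility with \emph{brackets}: one must check that the K\"unneth isomorphism $H_{*}(A\otimes\g)\cong H^{*}(A)\otimes H_{*}(\g)$ is an isomorphism of graded Lie algebras when $A$ is a cdga and $\g$ a dgla, so that the cup product and the Whitehead bracket really do combine into the bracket on $\pi_{\mathbb Q}$ of the function space. The remaining subtlety is purely notational --- the dgla $A(X)\otimes L(Y)$ is homologically graded with $A^{p}\otimes L_{q}$ placed in degree $q-p$, so the connected cover genuinely discards the non-positively graded summands --- and everything else is bookkeeping layered on top of Theorem~\ref{main}; so this last compatibility is where the actual content of the corollary sits.
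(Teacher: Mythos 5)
Your argument is correct and follows exactly the route the paper intends: the corollary is left without proof as an immediate consequence of Theorem~\ref{main} with $\xi=0$ (so the twisting is trivial), followed by replacing $A(X)$ and $L(Y)$ by $H^*(X)$ and $\pi_{\mathbb Q}(Y)$ along zig-zags of quasi-isomorphisms and passing to connected covers. The only quibble is that the bracket compatibility you single out at the end is automatic rather than the crux: every map in your zig-zag is already a dgla map terminating in $H^*(X)\otimes\pi_{\mathbb Q}(Y)$ equipped with its product bracket, so no separate K\"unneth-with-brackets check is required.
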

\noproof
\begin{rem}
Note that the Sullivan models described in the above corollary will not, in general, be minimal. However if $A(X)$ is formal (in the sense of being quasi-isomorphic to its cohomology) a minimal model for the components of $F^0(X,Y)$ and $F^0_*(X,Y)$ containing the constant map can be constructed. Indeed, consider instead of a dgla $L(Y)$ a minimal $L_\infty$ algebra $V$ equivalent to it (effectively, a Sullivan minimal model of $X$). Then $(H(X)\otimes V)\langle 0 \rangle$ is itself a minimal $L_\infty$ algebra whose representing formal cdga $\hat{S}\Sigma^{-1}[(H(X)\otimes V)\langle 0 \rangle]^*$ is a Sullivan minimal model of $F^0(X,Y)$; a similar argument applies to construct a Sullivan minimal model of $F^0_*(X,Y)$.\\ \\
\end{rem}
\begin{example}\
\begin{itemize}
\item
Consider the component of the constant map in $F(S^n,X)$ with $n\geq 1$. It follows from Theorem \ref{main} that a
Lie model of $F^0(X,Y)$ is the dgla $L(X)\ltimes\Sigma^{-n}L(X)\langle 0\rangle$,
the square-zero extension of $L(X)$ by $\Sigma^{-n}L(X)\langle 0\rangle$. It further follows from Corollary \ref{Hae}
that a Sullivan model of $F^0(S^n,X)$ is the cdga
\[C_{\CE}(L(X)\ltimes\Sigma^{-n}L(X)\langle 0\rangle)\cong C_{\CE}(L(X),\hat{S}\Sigma^{-1}[\Sigma^{-n} L(X)\langle 0\rangle]^*),\]
 the Chevalley-Eilenberg complex of $L(X)$ with coefficients in the (completed) symmetric algebra of
$\Sigma^{-1}[\Sigma^{-n} L(X)\langle 0\rangle]^*$. In the case $X$ is $n$-connected we have
$[\Sigma^{n}L(X)\langle 0\rangle]^*=[\Sigma^{n}L(X)]^*$ and so we get $C_{\CE}(L(X),\hat{S}[\Sigma^{n-1} L(X)^*])$ as a Sullivan model of $F(S^n,Y)$. Specializing further to the case $n=1$ we conclude that the cdga $C_{\CE}(L(X),\hat{S}[L(X)^*])$ is a Sullivan model of the free loop space of $X$. Note that the universal enveloping algebra of $L(X)$ is quasi-isomorphic to $C_*(\Omega(X))$, the chain algebra of the based loop space of $X$ whereas $\hat{S}[L(X)^*]$ is quasi-isomorphic to its dual $C_*(\Omega(X))$ and so we obtain a quasi-isomorphism
\[
C_{\CE}(L(X),S[L(X)^*])\cong C_{\Hoch}^*(C_*\Omega(X),C^*\Omega(X)).
\]
This is in agreement with the well-known result of Burghelea-Fiedorowicz and Goodwillie \cite{B,G}:
\[
H_*(F_*(S^1,X))\cong H_*^{\Hoch}(C_*\Omega(X),C_*\Omega(X)).
\]
\item
Now consider the component of the constant map in $\Omega^n(X)=F^0_*(S^n,X)$ with $n\geq 1$. Then a similar, but simpler, argument shows that the Lie-Quillen model of $\Omega^n(X)$ is the abelian Lie algebra $\Sigma^{-n}L(X)\langle 0\rangle$ and therefore its Sullivan model is the symmetric algebra
$S[\Sigma^{n-1}L(X)^*\langle 0\rangle])$. To be sure, this result is fairly obvious a priori.
\end{itemize}
\end{example}

\end{document}